\newcommand{\ubar}[1]{\underaccent{\bar}{#1}}
\def\dfrac{\displaystyle\frac}
\def\dsum{\displaystyle\sum}
\newtheorem{prop}{Proposition}
\newtheorem{theo}[prop]{Theorem}
\newtheorem{lemm}[prop]{Lemma}
\newtheorem{coro}[prop]{Corollary}
\newtheorem{rmk}[prop]{Remark}
\newtheorem{claim}{Claim}
\newcommand{\be}{\begin{equation}}
\newcommand{\ee}{\end{equation}}
\newcommand{\lt}{\left}
\newcommand{\rt}{\right}
\newcommand{\al}{\alpha}
\newcommand{\e}{\epsilon}
\renewcommand{\leq}{\leqslant}
\renewcommand{\geq}{\geqslant}
\newcommand{\td}{\tilde}
\newcommand{\ka}{\kappa}
\newcommand{\la}{\kappa}
\newcommand{\s}{\sigma}
\newcommand{\R}{\mathbb{R}}
\newcommand{\M}{\mathcal{M}}
\newcommand{\bx}{\bar{x}}
\newcommand{\bn}{\bar{\nabla}}
\newcommand{\goto}{\rightarrow}
\newcommand{\dS}{\mathbb{S}}
\newcommand{\us}{u^*}
\newcommand{\uj}{u^J}
\newcommand{\ujs}{u^{J*}}
\newcommand{\hs}{h^*}
\newcommand{\vjs}{\varphi^{J*}}
\newcommand{\lus}{\ubar{u}^*}
\newcommand{\lujs}{\ubar{u}^{J*}}
\newcommand{\lu}{\ubar{u}}
\newcommand{\uu}{\bar{u}}
\newcommand{\ga}{\gamma}
\newcommand{\gas}{\gamma^*}
\newcommand{\vp}{\varphi}
\newcommand{\T}{\partial}
\newcommand{\p}{\partial}
\newcommand{\w}{w^*}
\newcommand{\ba}{\mathfrak{b}}
\newcommand{\z}{\mathbf{z}}
\newcommand{\B}{\mathcal{B}}
\newcommand{\F}{\mathcal{F}}
\newcommand{\conv}{\text{Conv}}
\numberwithin{equation}{section}
\begin{document}
\setlength{\baselineskip}{1.2\baselineskip}

\title[Constant Hessian curvature hypersurface in the Minkowski space]
{Entire spacelike hypersurfaces with constant $\sigma_k$ curvature in Minkowski space}

\author{Zhizhang Wang}
\address{School of Mathematical Science, Fudan University, Shanghai, China}
\email{zzwang@fudan.edu.cn}
\author{Ling Xiao}
\address{Department of Mathematics, University of Connecticut,
Storrs, Connecticut 06269}
\email{ling.2.xiao@uconn.edu}
\thanks{2010 Mathematics Subject Classification. Primary 53C42; Secondary 35J60, 49Q10, 53C50.}
\thanks{Research of the first author is supported by NSFC Grants No.11871161 and 11771103.}

\begin{abstract}
In this paper, we prove the existence of smooth, entire, strictly convex, spacelike, constant
$\s_k$ curvature hypersurfaces with prescribed lightlike directions in Minkowski space. This is equivalent to prove the existence of
smooth, entire, strictly convex, spacelike, constant $\s_k$ curvature hypersurfaces with prescribed Gauss map image.
We also show that there doesn't exist any entire, convex, strictly spacelike, constant $\s_k$ curvature hypersurfaces. Moreover, we generalize the result in \cite{RWX} and construct strictly convex, spacelike, constant $\s_k$ curvature hypersurface with bounded principal curvature, whose image of the Gauss map is the unit ball.

\end{abstract}

\maketitle

\section{Introduction}
Let $\R^{n, 1}$ be the Minkowski space with the Lorentzian metric
\[ds^2=\sum_{i=1}^{n}dx_{i}^2-dx_{n+1}^2.\]
In this paper, we study convex spacelike hypersurfaces with positive constant
$\sigma_k$ curvature in Minkowski space $\R^{n, 1}$. Here, $\s_k$ is the $k$-th elementary symmetric polynomial, i.e.,
 \[\s_k(\ka)=\sum\limits_{1\leq i_1<\cdots<i_k\leq n}\ka_{i_1}\cdots\ka_{i_k}.\]
Any such hypersurface can be written locally as a graph of a function
$x_{n+1}=u(x), x\in\R^n,$ satisfying the spacelike condition
\be\label{int1.1}
|Du|<1.
\ee

Treibergs started the research of constructing nontrivial entire spacelike CMC hypersurfaces in \cite{Tre}. He showed that
for any $f\in C^2(\dS^{n-1}),$ there is a spacelike, convex, CMC hypersurface $\M_u=\{(x, u(x))|x\in\R^n\}$ with bounded principal curvatures, such that as $|x|\goto\infty$,
 $u(x)\goto |x|+f\lt(\frac{x}{|x|}\rt)$. The result in \cite{Tre} was generalized by Choi-Treibergs in \cite{CT}, where they proved that
 for any closed set $\F\subset\dS^{n-1}$ and $f\in C^0(\F),$ there is a spacelike convex CMC hypersurface $\M_u,$ such that when $\frac{x}{|x|}\in \F,$
  $u(x)\goto |x|+f\lt(\frac{x}{|x|}\rt),$ as $|x|\goto\infty.$

One natural question to ask is: can we construct convex entire spacelike constant
$\s_k$ curvature hyersurfaces with prescribed lightlike directions $\F\subset\dS^{n-1}$ and an arbitrary $C^0$ perturbation on $\F$?

It turns out that this question is very difficult. There are only some partial results obtained so far. More specifically,
Li (see \cite{Li}) extended the result in \cite{Tre} to constant Gauss curvature. He proved that for any $f\in C^2(\dS^{n-1}),$ there is a spacelike constant Gauss curvature hypersurface $\M_u$ with bounded principal curvatures, such that as $|x|\goto\infty$,
 $u(x)\goto |x|+f\lt(\frac{x}{|x|}\rt)$. In 2006, Guan-Jian-Schoen \cite{GJS} showed that when
 $\F=\dS^{n-1}_+=\{x\in \dS^{n-1}| x_1\geq 0\}$ and $f\in C^\infty (\dS^{n-1}_+)$ satisfies some additional conditions, then
 there is a spacelike constant Gauss curvature hypersurface $\M_u$ such that when $\frac{x}{|x|}\in\dS^{n-1}_+,$
 $u(x)\goto |x|+f\lt(\frac{x}{|x|}\rt)$ as $|x|\goto\infty$. Later, Bayard-Schn\"urer (see \cite{BS}) showed that for any
closed subset $\F\subset\dS^{n-1}$ with $\T\F\in C^{1,1},$ there is a spacelike constant Gauss curvature hypersurface $\M_u$
such that when $\frac{x}{|x|}\in\F,$ $u(x)\goto |x|$ as $|x|\goto\infty$. Under a weaker assumption  on the regularity of $\F,$
Bayard (see \cite{Bay09}) also proved the existence of entire spacelike hypersurface
$\M_u$ with constant scalar curvature such that when $\frac{x}{|x|}\in\F,$ $u(x)\goto |x|$ as $|x|\goto\infty$. However, the hypersurface constructed in \cite{Bay09} may not be convex. Very recently, under the same settings as in \cite{Tre} and \cite{Li}, Ren-Wang-Xiao (see \cite{RWX}) solved the existence problem for constant $\s_{n-1}$ curvature hypersurfaces. In particular, for any $f\in C^2(\dS^{n-1}),$ they constructed a spacelike, strictly convex, constant $\s_{n-1}$ curvature hypersurface $\M_u$ with bounded principal curvatures, which satisfies as $|x|\goto\infty$,
 $u(x)\goto |x|+f\lt(\frac{x}{|x|}\rt).$
\subsection{Main result}
In this paper, we will investigate convex, entire, spacelike hypersurfaces of constant $\s_k$ curvature with prescribed lightlike directions. This is equivalent
to study convex, entire, spacelike hypersurfaces of constant $\s_k$ curvature with prescribed Gauss map image.
Our main Theorems are stated as follows.
\begin{theo}
\label{intth1.1}
Suppose $\mathcal{F}\subset\dS^{n-1}$ is the closure of an open subset and $\T\F\in C^{1,1}$. Then for $1<k<n$, there exists a smooth,
entire, spacelike, strictly convex hypersurface $\M_u=\{(x,u(x))| x\in\R^n\}$ satisfying
\be\label{int1.0}
\sigma_k(\ka[\M_u])=\binom{n}{k},
\ee
 where $\ka[\M_u]=(\kappa_1,\kappa_2,\cdots,\kappa_n)$ is the principal curvatures of $\M_u$.
 Moreover, when $\frac{x}{|x|}\in\F,$
 \be\label{int1.0'}
 u(x)\goto |x|,\,\mbox{as $|x|\goto\infty.$}
 \ee
Further, the Gauss map image of $\M_u$ is the convex hull $\conv(\F)$ of $\F$ in the unit disc.
\end{theo}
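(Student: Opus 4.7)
The plan is to reformulate the problem by the Legendre transform on the dual unit disc, solve an approximating sequence of Dirichlet problems on smooth strictly convex subdomains exhausting $\conv(\F)$, and pass to the limit. For a smooth, strictly convex, spacelike graph $\M_u$, the map $x\mapsto Du(x)$ is a diffeomorphism from $\R^n$ onto an open convex subset of the unit disc $\{|y|<1\}$, and the Legendre transform $\us(y):=\sup_{x}\bigl(x\cdot y-u(x)\bigr)$ converts \eqref{int1.0} into a fully nonlinear elliptic equation for $\us$. The requirement that the Gauss map image of $\M_u$ be $\conv(\F)$ translates to $\us$ being defined precisely on $\conv(\F)$, with a prescribed blow-up behavior at those boundary points that lie in $\F$, a behavior that will encode \eqref{int1.0'}.

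Using $\T\F\in C^{1,1}$, I would construct a nested exhaustion $\{\Omega_J\}_{J\geq 1}$ of $\conv(\F)$ by smooth, strictly convex subdomains with $\overline{\Omega_J}\subset\Omega_{J+1}$ and $\bigcup_J\Omega_J=\conv(\F)$. On each $\Omega_J$, solve a Dirichlet problem for the dual equation whose boundary data are engineered so that the associated primal hypersurface $\M_{\uj}$ is complete and has lightlike tangent directions along $\partial\Omega_J$; existence of the approximate solutions $\ujs$ follows from the Caffarelli--Nirenberg--Spruck theory for $\s_k$ equations via a continuity method. The central analytic task, which I expect to be the main obstacle, is to derive interior $C^{0}$, $C^{1}$, and $C^{2}$ estimates for $\ujs$ on compact subsets of $\conv(\F)$ that are \emph{uniform in $J$}. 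The $C^{0}$ and $C^{1}$ bounds will follow from convexity together with upper and lower barriers constructed from rotationally symmetric $\s_k$ solutions (lightcones and hyperboloids) in the primal picture, in the spirit of \cite{Tre,Li,RWX}. The $C^{2}$ estimate is the delicate point: for a general $1<k<n$, the equation does not reduce as it does when $k=n-1$ or $k=n$, and one must bound the largest principal curvature by testing an auxiliary quantity of the form $\log\ka_{\max}+\vp(|Du^{J*}|^{2})+\psi(\ujs)$ at an interior maximum and exploiting the concavity of $\s_k^{1/k}$ to absorb the error terms against the good terms of the linearized operator.

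Once uniform $C^{2}$ bounds are in hand, Evans--Krylov together with Schauder bootstrap yields uniform $C^{k,\alpha}_{\mathrm{loc}}$ estimates, so a subsequence of $\{\ujs\}$ converges in $C^{\infty}_{\mathrm{loc}}(\conv(\F))$ to a limit $\us$, whose Legendre dual $u$ is a smooth, entire, strictly convex, spacelike solution of \eqref{int1.0}. The identification of the Gauss map image of $\M_u$ as exactly $\conv(\F)$ follows from the monotonicity of the exhaustion $\{\Omega_J\}$ together with the dual characterization of the image of $Du$. Finally, \eqref{int1.0'} will be verified along directions $x/|x|\in\F$ by comparing $u$ with lightcone barriers at the corresponding boundary points of $\conv(\F)$, in the same manner as in \cite{Tre,Li,RWX}.
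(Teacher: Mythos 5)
Your overall architecture coincides with the paper's: pass to the Legendre transform on $\conv(\F)$, exhaust by smooth strictly convex domains $\td{F}_J$, solve approximate Dirichlet problems, and take a limit. However, the two places where you declare the work routine are exactly where the real difficulties sit, and as written both are genuine gaps. First, existence for the approximate problems does \emph{not} follow from Caffarelli--Nirenberg--Spruck plus a continuity method: the dual equation is not a standard $\s_k$ Hessian equation but a Hessian-quotient operator $F=\lt(\frac{\s_n}{\s_{n-k}}\rt)^{1/k}$ acting on the modified Hessian $\w\gas_{ik}\us_{kl}\gas_{lj}$, and the missing ingredient is the global, and above all the boundary, $C^2$ estimate. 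Trudinger's double-normal argument for $\s_n/\s_{n-k}$ does not adapt because of the $\w\gas\cdot\gas$ structure, and Guan's Riemannian Dirichlet theory does not apply since this $F$ fails his structural hypothesis. The paper has to reformulate the problem on $\mathbb{H}^n$ via the Gauss map, prove the identity of Lemma \ref{c2blem1.1} relating $\bar\nabla^2(u/\w)-(u/\w)\delta_{ij}$ to $\w\gas_{ik}u_{kl}\gas_{lj}$, build the boundary barrier of Lemma \ref{c2blem1.2} out of the subsolution and the distance function to $\p\td{F}_J$ (using convexity of $\p\td{F}_J$), and then exploit the specific algebra of the quotient (the inequality culminating in \eqref{3.14}) to close the global second-order bound. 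None of this is supplied or replaceable by an off-the-shelf citation.

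Second, the uniform-in-$J$ interior estimates are asserted, not proved. For $1<k<n$ an interior curvature bound for $\s_k$ is precisely the notorious open difficulty, and ``$\log\ka_{\max}+\vp(|D\ujs|^2)+\psi(\ujs)$ plus concavity'' is an ansatz, not an argument. The paper instead proves a Pogorelov-type estimate in the primal picture (Lemma \ref{lc2lem1}, test function $m\log(s-u)+\log P_m-mN\langle\nu,\mathbf{E}\rangle$), which leans on the convexity machinery of \cite{GRW,LRW}, and this in turn requires a uniform local $C^1$ estimate that is itself delicate: the natural upper (CMC, \cite{CT}) and lower (constant Gauss curvature, \cite{BS}) barriers do not converge to each other at infinity, so one must construct an explicit spacelike cutoff function (Lemma \ref{lc1lem3}); rotationally symmetric lightcones and hyperboloids do not suffice for a proper subset $\F\subsetneq\dS^{n-1}$, and lightcones alone give only the lower half of \eqref{int1.0'}. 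Moreover, since the Pogorelov estimate only bounds the principal curvatures from above, strict convexity of the limit is not automatic from Evans--Krylov; the paper needs a Minkowski-space constant rank theorem (Theorem \ref{constant rank}) together with a splitting theorem (Theorem \ref{splitting}) to rule out degeneracy, an ingredient absent from your outline. (Your remark that the boundary data should make the approximate primal graphs ``complete with lightlike tangent directions along $\p\Omega_J$'' is also off: with bounded dual boundary data this cannot occur and is not needed; the paper simply takes $\vjs=\lus|_{\p\td{F}_J}$, the restriction of the Legendre transform of the lower barrier, so that the barriers order correctly and Lemma \ref{lem-legendre-boundary} yields the asymptotics.)
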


 In the process of proving Theorem \ref{intth1.1}, we obtain a Pogorelov type $C^2$ local estimate. A direct consequence of this estimate is the following
 nonexistence result.
\begin{coro}
\label{intcor1}
Suppose $\M_u=\{(x, u(x))| x\in\R^n\}$ is an entire, convex, spacelike hypersurface with constant $\s_k$ curvature, namely, it satisfies equation
\eqref{int1.0}. Moreover, we assume $\M_u$ is strictly spacelike, that is, there is some constant $\beta<1$ such that
$$|Du|\leq \beta<1,\,\,x\in\R^n.$$ Then, such $\M_u$ does not exist.
\end{coro}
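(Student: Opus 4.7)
I would argue by contradiction. Suppose such a hypersurface $\M_u$ exists on all of $\R^n$, with $|Du|\leq\beta<1$ and $\s_k(\ka[\M_u])=\binom{n}{k}$. The idea is to exploit the interior character of the Pogorelov-type $C^2$ local estimate referenced in the statement: applied to $u|_{B_R(x_0)}$ for arbitrary $x_0\in\R^n$ and $R>0$ and then pushing $R\to\infty$, it should force the second fundamental form of $\M_u$ to vanish, reducing $\M_u$ to an affine hyperplane, which cannot have positive $\s_k$.

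Concretely, fix $x_0\in\R^n$ and $R>0$. The restriction $u|_{B_R(x_0)}$ is a smooth, strictly convex, spacelike solution of \eqref{int1.0} on $B_R(x_0)$, and the uniform bound $|Du|\leq\beta$ holds. I expect the Pogorelov-type interior estimate to give, at the center,
\[
\ka_{\max}(x_0)\,\leq\,C(\beta,n,k)\,R^{-p}
\]
for some exponent $p>0$ and a constant $C$ depending only on $\beta,n,k$ (not on $R$ nor on $\osc_{B_R(x_0)}u$, the latter being in any case at most $2\beta R$ by strict spacelikeness). Letting $R\to\infty$ yields $\ka_{\max}(x_0)=0$. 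Since $x_0\in\R^n$ was arbitrary, every principal curvature of $\M_u$ vanishes identically, so $\M_u$ is contained in an affine hyperplane. But an affine hyperplane satisfies $\s_k\equiv 0$, contradicting $\s_k(\ka[\M_u])=\binom{n}{k}>0$.

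The main obstacle is establishing this clean form of the Pogorelov estimate with an $R$-independent constant. The natural approach is to apply the maximum principle to a test function such as
\[
W\;=\;\log\ka_{\max}\,+\,a\,|Du|^2\,+\,b\,u\,+\,c\,\log\bigl(R^2-|x-x_0|^2\bigr),
\]
and the hard part will be arranging the parameters $a,b,c$ so that the error terms generated by differentiating through the $\s_k$-equation are either absorbed by the good (elliptic) terms or controlled by $(1-\beta^2)^{-1}$. Here the uniform strict spacelike hypothesis is decisive: without it, the factors of $(1-|Du|^2)^{-1}$ that unavoidably arise in the computation would blow up as $|Du|\to 1$, the constant $C$ would degenerate, and the argument would collapse---which explains why the nonexistence holds only for \emph{strictly} spacelike hypersurfaces, while the existence result in Theorem \ref{intth1.1} produces solutions whose gradient does approach $1$ at infinity.
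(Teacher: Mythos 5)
Your overall contradiction scheme is the same as the paper's: a Pogorelov-type estimate with a cutoff over arbitrarily large regions, with a constant controlled only through the uniform bound $-\langle\nu,\mathbf{E}\rangle\leq(1-\beta^2)^{-1/2}$, forces $\ka_{\max}\to 0$ and contradicts $\s_k=\binom{n}{k}>0$. But the key estimate you invoke is not the one the paper proves, and this is a genuine gap. You posit a \emph{pure interior} estimate on Euclidean balls, $\ka_{\max}(x_0)\leq C(\beta,n,k)R^{-p}$, to be obtained from a test function containing the spatial cutoff $c\log\bigl(R^2-|x-x_0|^2\bigr)$; you acknowledge that making the constant $R$-independent is ``the main obstacle'' and do not resolve it. This is precisely the step that cannot be taken for granted: for equations of Hessian/curvature type with $1<k<n$, interior $C^2$ estimates depending only on a gradient bound and the Euclidean distance to the boundary are not available by such an argument (compare Pogorelov's counterexample for Monge--Amp\`ere, which is exactly why Pogorelov-type estimates use the solution-dependent cutoff rather than a ball cutoff). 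The paper's Lemma \ref{lc2lem1} instead uses the cutoff $s-u$ on the sublevel set $T_s=\{u<s\}$, with test function $m\log(s-u)+\log P_m-mN\langle\nu,\mathbf{E}\rangle$; the good terms that absorb the bad ones come from the critical-point identity, where the derivative of $\log(s-u)$ produces $\langle X_i,\mathbf{E}\rangle/(s-u)$, which is then played against the term $-Nh_{ii}\langle X_i,\mathbf{E}\rangle$ and the structure of $\s_k$ (together with the Lemma 8--9 machinery of \cite{LRW}). None of this mechanism is present for your cutoff $\log(R^2-|x-x_0|^2)$, so ``arranging $a,b,c$'' is not a routine matter and your plan rests on an unproven, and likely unobtainable, estimate.

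The fix is to use exactly Lemma \ref{lc2lem1}: its constant depends only on the local $C^1$ bound, which your hypothesis $|Du|\leq\beta$ supplies globally. One preliminary step you omit (and which your ball-based setup silently avoided) is needed: after a Lorentz transformation, one may assume $u(x)\to\infty$ as $|x|\to\infty$, so that each $T_s$ is compact, the maximum in the Pogorelov argument is attained in the interior (the weight $s-u$ vanishes on $\partial T_s$), and the sets $T_s$ exhaust $\R^n$. Then for any fixed $x$ choose $s$ with $x\in T_{s/2}$, so $(s-u(x))\ka_{\max}(x)\leq C(\beta)$ gives $\ka_{\max}(x)\leq 2C(\beta)/s$, and letting $s\to\infty$ already contradicts $\s_k(\ka)=\binom{n}{k}>0$ at the single point $x$; there is no need to pass to the conclusion that $\M_u$ is a hyperplane. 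Your closing remark about why strict spacelikeness is essential is correct and agrees with the paper's reasoning.
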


We also generalize the existence Theorem in \cite{RWX} and prove
\begin{theo}
\label{intth1.2}
Given any $f\in C^2(\dS^{n-1})$, there is a unique, spacelike, strictly convex hypersurface $\M_u=\{(x, u(x))| x\in\R^n\}$
with bounded principle curvatures satisfying equation \eqref{int1.0}. Moreover,
\[u(x)\goto |x|+f\lt(\frac{x}{|x|}\rt), \,\,\mbox{as $|x|\goto\infty$}.\]
Furthermore, the Gauss map image of $\M_u$ is the open unit disc.
 \end{theo}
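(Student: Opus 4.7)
The plan is to adapt the exhaustion strategy of Treibergs \cite{Tre}, Li \cite{Li}, and Ren--Wang--Xiao \cite{RWX}: solve the Dirichlet problem on a sequence of expanding balls $B_R$ with boundary data matching $\vp(x):=|x|+f(x/|x|)$, derive a priori estimates independent of $R$, and extract a smooth entire limit.

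For each $R\gg 1$ I would solve
\begin{equation*}
\s_k\bigl(\ka[\M_{u^R}]\bigr)=\binom{n}{k}\text{ in }B_R,\qquad u^R=\vp_R\text{ on }\p B_R,
\end{equation*}
where $\vp_R$ is a $k$-admissible smoothing of $\vp|_{\p B_R}$. Solvability by a smooth, strictly convex, spacelike $u^R$ is provided by the Caffarelli--Nirenberg--Spruck continuity method, once barriers are in place. The lower barrier is $\vp$ itself (convexified near the origin) and the upper barrier is a hyperboloid of the form $\sqrt{1+|x-x_0|^2}+c$ pinned above $\vp$; these control $\|u^R\|_{C^0}$ uniformly in $R$. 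Convexity of $u^R$ and the strictly sublight asymptotic slope of $\vp$ then yield, on each compact $K\subset\R^n$, a uniform spacelike bound $|Du^R|\leq 1-\delta_K<1$.

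The main obstacle I anticipate is the global $C^2$ estimate, i.e., a uniform bound on the principal curvatures of $\M_{u^R}$ independent of $R$. I would apply the maximum principle to an auxiliary function of the form
\begin{equation*}
W=\ln\ka_{\max}+\eta(|Du|^2)+\chi(u-\vp_\e),
\end{equation*}
with $\vp_\e:=\vp+\e|x|^2$ and $\eta,\chi$ chosen so that the good terms arising from concavity of $\s_k^{1/k}$ on the positive cone, combined with the Newton--Maclaurin inequalities and a splitting of indices into large and small curvature directions, dominate the error terms. The boundary $C^2$ bound requires separate control of tangential, mixed, and double-normal second derivatives along $\p B_R$ in the spirit of \cite{CNS}, the double-normal case relying on the strict convexity of $\vp_\e$. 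Unlike the case $k=n-1$ treated in \cite{RWX}, for intermediate $k$ the sum $\sum F^{ii}\ka_i^2$ is not automatically large, so this splitting must be executed with care.

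With uniform bounds on $|\ka|$ and on the spacelike gradient, Evans--Krylov and Schauder furnish uniform interior $C^{m,\a}$ estimates on compact subsets. A diagonal subsequence of $\{u^R\}$ then converges smoothly to an entire, smooth, strictly convex, spacelike $u$ solving \eqref{int1.0} with bounded principal curvatures, and the barriers force $u(x)-|x|-f(x/|x|)\to 0$ as $|x|\to\infty$. Bounded principal curvatures rule out lightlike directions in the image of the Gauss map, while convexity together with the asymptotic slope of $\vp$ fills out every interior direction, yielding the open unit disc. Uniqueness is obtained by comparison: two such solutions $u,v$ satisfy $u-v\to 0$ at infinity, and an interior extremum of $u-v$ is excluded by the strong maximum principle applied to the concave operator $\s_k^{1/k}$ at the contact point.
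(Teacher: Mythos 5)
Your plan is the ``natural'' direct exhaustion: solve the Dirichlet problem \eqref{dirichlet-minkowski} on balls $B_R$ with boundary data close to $|x|+f(x/|x|)$ and pass to the limit. This is precisely the approach the paper rejects at the outset: for $1<k<n$ the Dirichlet problem \eqref{dirichlet-minkowski} for the prescribed $\s_k$ \emph{curvature} equation of a spacelike graph in Minkowski space is not known to be solvable, and the obstruction is exactly the step you dispose of in one sentence, namely the boundary $C^2$ estimate. The Caffarelli--Nirenberg--Spruck theory you invoke concerns Hessian equations $\s_k(\lambda[D^2u])=\psi$ in Euclidean domains; for the curvature operator $\s_k(\ka[\M_u])$ with the Lorentzian factors $\frac{1}{w}\ga^{ik}u_{kl}\ga^{lj}$, the double-normal estimate does not follow ``in the spirit of \cite{CNS}'' from strict convexity of the boundary data -- indeed even after the authors pass to the much cleaner dual equation they are unable to adapt Trudinger's argument for $u_{nn}$ and must build a new barrier through the hyperbolic model (Lemma \ref{c2blem1.1}). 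A second, related gap: even if an admissible solution $u^R$ existed, for $1<k<n$ admissibility ($\ka\in\Gamma_k$) does not imply convexity, so your ``smooth, strictly convex, spacelike $u^R$'' is unjustified; yet convexity is what your interior curvature estimate (a Pogorelov-type test function in the style of \cite{GRW}, \cite{LRW}, i.e. Lemma \ref{lc2lem1} of the paper) actually requires. So the two load-bearing steps of your argument -- solvability on $B_R$ with convexity, and the uniform $C^2$ bound up to the boundary -- are each unsupported, and the first is the very difficulty that forced the authors onto a different route.

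The paper's proof of Theorem \ref{intth1.2} instead works entirely on the dual side: by the Legendre transform/Gauss map, the problem becomes the Dirichlet problem \eqref{int1.0*} for $F=\bigl(\s_n/\s_{n-k}\bigr)^{1/k}$ of $\w\gas_{ik}\us_{kl}\gas_{lj}$ on the \emph{fixed} unit ball $B_1$ with boundary value $\vp=-f$, where convexity of $\us$ is built into the equation. The global second-order bound comes from the hyperbolic-model estimate of Subsection \ref{c2g} (bounding the curvature radii), the bounded principal curvatures from the Pogorelov-type estimate of Lemma \ref{lc2lem1} applied to the Legendre transform $u$, and the asymptotics $u(x)-|x|\goto f(x/|x|)$ are read off from Lemma \ref{lem-legendre-boundary}, generalizing the $\s_{n-1}$ theorem of \cite{RWX}. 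If you want to salvage your outline, you would have to either prove the missing boundary $C^2$ estimate for the curvature equation in Minkowski space for intermediate $k$ (an open problem) or reformulate, as the paper does, so that the domain is the Gauss map image and convexity and boundary control come for free.
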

\subsection{Idea of the proof}
The natural idea of constructing entire spacelike hypersurfaces $\M_u$ that satisfy equations \eqref{int1.0} and \eqref{int1.0'}
is very straightforward.
First, we can use the entire constant Gauss curvature hypersurface constructed in \cite{BS} as our lower barrier $\lu$ and use the entire CMC
hypersurface constructed in \cite{CT} as the upper barrier $\uu$. Then, we look at the following Dirichlet problem
\be\label{dirichlet-minkowski}
\left\{
\begin{aligned}
\s_k(\ka[\M_u])&=\binom{n}{k}\,\,\text{in $B_R$}\\
u&=\vp_R\,\,\text{on $\partial B_R,$}
\end{aligned}
\right.
\ee
where $B_R\subset\R^n$ is a ball with radius $R$ and $\vp_R$ is some smooth function satisfies
$\lu|_{\p B_R}\leq\vp_R\leq\uu|_{\p B_R}.$ Finally, we prove the local $C^0,$ $C^1,$ and $C^2$ estimates for
the solution $u_R$ of the equation \eqref{dirichlet-minkowski}. These local estimates enable us to conclude that there exists a sequence
of solutions of \eqref{dirichlet-minkowski}, denoted by $\{u_{R_i}\}_{i=1}^{\infty},$ $R_i\goto\infty$ as $i\goto\infty,$ converging to an entire graph $u,$
and $u$ satisfies \eqref{int1.0}, \eqref{int1.0'}.

Unfortunately, the Dirichlet problem \eqref{dirichlet-minkowski} is unsolvable in Minkowski space for general $k.$ We have to find other approaches.
We will consider the following Dirichlet problem instead.
\be\label{main equation-ball}
\left\{
\begin{aligned}
F(\w\gas_{ik}\us_{kl}\gas_{lj})&=\frac{1}{\binom{n}{k}^{\frac{1}{k}}}\,\,\text{in $\td{F}$}\\
\us&=0\,\,\text{on $\F,$}
\end{aligned}
\right.
\ee
where $\w=\sqrt{1-|\xi|^2},$ $\gas_{ij}=\delta_{ij}-\frac{\xi_i\xi_j}{1+\w},$ $\us_{kl}=\frac{\p^2u}{\p\xi_k\p\xi_l},$ $\F\subset\dS^{n-1}$ as described in Theorem \ref{intth1.1},
$\td{F}$ is the convex hull of $\F$ in $B_1:=\{\xi\mid |\xi|<1\},$ and $F(\w\gas_{ik}\us_{kl}\gas_{lj})=\lt(\frac{\s_n}{\s_{n-k}}(\ka^*[\w\gas_{ik}\us_{kl}\gas_{lj}])\rt)^{1/k}.$
Here, $\ka^*[\w\gas_{ik}\us_{kl}\gas_{lj}]=(\ka^*_1, \cdots, \ka^*_n)$ are the eigenvalues of the matrix $(\w\gas_{ik}\us_{kl}\gas_{lj}).$
The advantage of studying \eqref{main equation-ball} is that it restricts us to convex solutions.
In the Subsection \ref{lt} and Section \ref{gm}, we will illustrate that if $\us$ is a solution of \eqref{main equation-ball}, then the Legendre transform of
$\us,$ denoted by $u$, satisfies \eqref{int1.0} and \eqref{int1.0'}. However, equation \eqref{main equation-ball} is a degenerate equation
which cannot be solved directly.

We need to study the following approximating problems
\be\label{dirichlet-ball}
\left\{
\begin{aligned}
F(\w\gas_{ik}\ujs_{kl}\gas_{lj})&=\frac{1}{\binom{n}{k}^{\frac{1}{k}}}\,\,\text{in $\td{F}_J$}\\
\us&=\vjs\,\,\text{on $\p\td{F}_J,$}
\end{aligned}
\right.
\ee
where $\{\td{F}_J\}_{J=1}^\infty$ is a sequence of smooth convex set in $\td{F}$ that approaches $\td{F},$
$\vjs=\lus|_{\p\td{F}_J},$ and $\lus$ is the Legendre transform of $\lu.$ Despite the equation \eqref{dirichlet-ball}
is no longer degenerate, there is no known existence result for it either. The main difficulty is to obtain the global $C^2$ estimate.
As we already know, in order to obtain the global $C^2$ estimate we need to get a $C^2$ boundary estimate first.
However, the $C^2$ boundary estimate in this case is very challenging.
Recall that our function $F=\lt(\frac{\s_n}{\s_{n-k}}\rt)^{1/k}.$ Therefore, to obtain the $C^2$ boundary estimate we have to get estimates
on both $u_{\al n}$ and $u_{nn},$ where $u_{\al n}$ is the tangential normal mixed derivative at the boundary and $u_{nn}$ is the double normal derivative.
In \cite{Tru}, Trudinger was able to obtain the $C^2$ boundary estimate for the Hessian equations of the form $\frac{\s_n}{\s_{n-k}}(\ka[D^2u])=\psi.$
Here, while we are able to estimate $u_{n\al},$ due to the complication of $\w\gas_{ik}\ujs_{kl}\gas_{lj}$,
we fail to adapt his method to obtain the estimate on $u_{nn}.$
It's desirable to  find a simpler equivalent expression for equation \eqref{main equation-ball}

It's well known that the Gauss map $G:\M\goto\mathbb{H}^n(-1)$ maps a strictly convex spacelike hypersurface $\M$ to the hyperbolic space $\mathbb{H}^n(-1).$
We will see in subsection \ref{gm} that the solvability of \eqref{dirichlet-ball} is equivalent to the solvability of the following equation:
\be\label{dirichlet-hyperbolic}
\left\{
\begin{aligned}
F(v_{ij}-v\delta_{ij})&=\frac{1}{\binom{n}{k}^{\frac{1}{k}}},\,\,\mbox{in $U_J$}\\
v&=\frac{\vjs(\xi)}{\sqrt{1-|\xi|^2}},\,\,\mbox{on $\partial U_J.$}
\end{aligned}
\right.
\ee
where $v_{ij}=\bn_i\bn_jv$ denotes the covariant derivative with respect to the hyperbolic metric,  $U_J=P^{-1}(\td{F}_J)\subset\mathbb{H}^n(-1),$ and $P: \mathbb{H}^n\goto B_1$ is the projection of $\mathbb{H}^n.$ Moreover, we have that the eigenvalues of the matrix
$(\w\gas_{ik}\ujs_{kl}\gas_{lj})$ are the same as the eigenvalues of the matrix $(\bn_i\bn_j v-v\delta_{ij}).$ Therefore, we will study the $C^2$ estimates
for equation \eqref{dirichlet-hyperbolic}. Surprisingly, as nice as \eqref{dirichlet-hyperbolic} may seem to be, the $C^2$ bound for $v$ is very tricky to obtain. In fact, fully nonlinear equations of the form of equation \eqref{dirichlet-hyperbolic} in Riemannian manifold have been studied in \cite{Guan}. However, our functional $F$ doesn't meet all conditions that are required in \cite{Guan}. Therefore, we need to develop new ways to obtain the $C^2$ global estimate. The difficult part in this model is to construct an auxiliary function that will be needed to estimate the tangential normal mixed derivatives. We overcome this difficulty by a key observation that essentially connects $\bn_i\bn_j$ with $\p_{\xi_i}\p_{\xi_j}$ (see Lemma \ref{c2blem1.1}), so that we can utilize the convexity of $\td{F}_J$ to construct an auxiliary function we need in $\mathbb{H}^n$.

The last major obstacle is the $C^1$ local estimate. In \cite{Bay} and \cite{BS}, Bayard and Bayard-Schn\"urer first observed that, if there exists a spacelike function $\psi$ satisfying $\psi<\lu$ in a compact set $K\subset\R^n$ and $\psi>\uu$ as $|x|\goto\infty.$ Then, there is a local $C^1$ estimate for the solution
$u_R$ of equation \eqref{dirichlet-minkowski}, where $R>0$ large such that $B_R\supset K.$ It's clear that $\psi$ serves as a cutoff function here.
In the constant Gauss curvature case (see \cite{BS}), the lower barrier $\lu$ and upper barrier $\uu$ constructed by Bayard-Schn\"urer satisfy $\lu-\uu\goto 0$ as $|x|\goto \infty.$ Therefore, a rescaling of $\lu$ yields a perfect cutoff function. However, in the constant $\s_k$ curvature case, one can not find barrier functions $\lu$ and $\uu$ satisfying $\lu-\uu\goto 0$ as $|x|\goto \infty.$ Moreover, when $\frac{x}{|x|}\in\F,$ as $|x|\goto \infty,$ $|D\lu|$ and $|D\uu|\goto 1;$ while we need to make sure that $\psi$ is spacelike. Thus, to construct a cutoff function $\psi,$ we need to carefully analyze the asymptotic behavior of our $\lu$ and $\uu.$  The construction is very delicate (see Lemma \ref{lc1lem3}).

\begin{rmk}
After this paper was done, we discovered that in \cite{Bay09} Bayard successfully constructed a spacelike cutoff function $\psi$ by rescaling CMC hypersurfaces.
His construction also overcomes the problem that for some directions $\theta\in\dS^{n-1},$ the barrier function $\lim\limits_{r\goto\infty}(\lu(r\theta)-\uu(r\theta))\nrightarrow0.$ However, the advantage of our construction is that our $\psi$ has a very explicit formula (see \eqref{cutoff-function}), which enables us to construct prescribed curvature hypersurfaces with nonzero data in the lightlike directions, i.e., for $f\in C^2(\dS^{n-1})$ and $\F\subset\dS^{n-1},$ when $\frac{x}{|x|}\in\F,$ $u(x)-|x|\goto f\lt(\frac{x}{|x|}\rt)$ as $|x|\goto\infty.$ We will include this result in an upcoming paper.
\end{rmk}
\subsection{Outline} The organization of the paper is as follows. In Section \ref{pre}, we introduce some basic formulas and notations. In particular,
we investigate strictly convex, spacelike hypersurfaces under the Gauss map and the Legendre transform respectively. We summarize properties of the Gauss map in Section \ref{gm}. In Section \ref{cb}, we construct sub- and super- solutions of equation \eqref{int1.0}. We also review properties of semitroughs. These properties give us a thorough understanding of the asymptotic behavior of the sub- and super- solutions which will be needed in Section \ref{cv}. The solvability of equation \eqref{dirichlet-ball} is discussed in Section \ref{cs}. In Section \ref{cv}, we prove the local $C^1$ and $C^2$ estimates, which leads to proofs of our main theorems.

\section{Preliminaries}
\label{pre}
In this section, we will derive some basic formulas for the geometric quantities of spacelike hypersurfaces in Minkowski space $\R^{n, 1}.$
We first recall that the Minkowski space $\R^{n,1}$ is $\R^{n+1}$ endowed with the Lorentzian metric
$$ds^2=dx_1^2+\cdots dx_{n}^2-dx_{n+1}^2.$$
Throughout this paper, $\lt<\cdot, \cdot\rt>$ denotes the inner product in $\R^{n,1}$.

\subsection {Vertical graphs in $\R^{n, 1}$}
\label{vg}
A spacelike hypersurface $\M$ in $\R^{n, 1}$ is a codimension one submanifold whose induced metric
is Riemannian. Locally $\M$ can be written as a graph
\[\M_u=\{X=(x, u(x))| x\in\R^n\}\]
satisfying the spacelike condition \eqref{int1.1}. Let
$\mathbf{E}=(0, \cdots, 0, 1),$ then the height function of $\M$ is $u(x)=-\lt<X, \mathbf{E}\rt>.$ It's easy to see that the induced metric and second fundamental form of $\M$ are given
by
$$g_{ij}=\delta_{ij}-D_{x_i}uD_{x_j}u, \ \  1\leq i,j\leq n,$$
and
\[h_{ij}=\frac{u_{x_ix_j}}{\sqrt{1-|Du|^2}},\]
while the timelike unit normal vector field to $\M$ is
\[\nu=\frac{(Du, 1)}{\sqrt{1-|Du|^2}},\]
where $Du=(u_{x_1}, \cdots, u_{x_n})$ and $D^2u=\lt(u_{x_ix_j}\rt)$ denote the ordinary gradient and Hessian of $u$,
respectively. By a straightforward calculation, we have the principle curvatures of $\M$ are eigenvalues of the symmetric matrix
$A=(a_{ij}):$
\[a_{ij}=\frac{1}{w}\ga^{ik}u_{kl}\ga^{lj},\]
where $\ga^{ik}=\delta_{ik}+\frac{u_iu_k}{w(1+w)}$ and $w=\sqrt{1-|Du|^2}.$ Note that $(\ga^{ij})$ is invertible with inverse
$\ga_{ij}=\delta_{ij}-\frac{u_iu_j}{1+w},$ which is the square root of $(g_{ij}).$

Let $\mathcal{S}$ be the vector of $n\times n$ symmetric matrices and
\[\mathcal{S}_+=\{A\in \mathcal{S}: \lambda(A)\in \Gamma_n\},\]
where $\Gamma_n:=\{\lambda\in\R^n:\,\,\mbox{each component $\lambda_i>0$}\}$ is the convex cone, and $\lambda(A)=(\lambda_1, \cdots, \lambda_n)$ denotes the eigenvalues of $A.$
Define a function $F$ by
\[F(A)=\sigma^{\frac{1}{k}}_k(\lambda(A)),\,\, A\in\mathcal{S}_+,\]
then \eqref{int1.0} can be written as
\be\label{main equation graph}
F\lt(\frac{1}{w}\ga^{ik}u_{kl}\ga^{lj}\rt)=\binom{n}{k}^{\frac{1}{k}}.
\ee
Note that, in fact the function $F$ is well defined on $\mathcal{S}_k=\{A\in \mathcal{S}: \lambda(A)\in \Gamma_k\},$ where
$\Gamma_k$ is the G{\aa}rding cone (see \cite{CNS}).
However, in this paper, we only study strictly convex hypersurfaces, thus we restrict ourselves to $\mathcal{S}_+.$
Throughout this paper we denote
\[F^{ij}(A)=\frac{\p F}{\p a_{ij}}(A),\,\,F^{ij, kl}=\frac{\p^2 F}{\p a_{ij}\p a_{kl}}.\]

One important example of the spacelike hypersurface with constant mean curvature is the hyperboloid
\[u(x)=\lt(\frac{n^2}{H^2}+\sum_{i=1}^{n}x_i^2\rt)^{1/2},\]
which is umbilic, i.e., it satisfies $\kappa_1=\kappa_2=\cdots=\kappa_n=\frac{H}{n}.$ Other examples of spacelike CMC hypersurfaces include hypersurfaces of revolution,
in which case the graph takes the form $u(x)=\sqrt{f(x_1)^2+|\bar{x}|^2},\,\,x=(x_1, \bar{x})=(x_1, \cdots, x_n)\in\R^n,$ where $f$ is a function only depending on $x_1$. In Section \ref{cb}, we will discuss properties of CMC hypersurfaces of this type in details.

Now, let $\{\tau_1,\tau_2,\cdots,\tau_n\}$ be a local orthonormal frame on $T\M$. We will use $\nabla$ to denote
the induced Levi-Civita connection on $\M.$ For a function $v$ on $\M$, we denote $v_i=\nabla_{\tau_i}v,$ $v_{ij}=\nabla_{\tau_i}\nabla_{\tau_j}v,$ etc.
In particular, we have
\[|\nabla u|=\sqrt{g^{ij}u_{x_i}u_{x_j}}=\frac{|Du|}{\sqrt{1-|Du|^2}}.\]

Using normal coordinates, we also need the following well known fundamental equations for a hypersurface $\M$ in $\R^{n, 1}:$
\begin{equation}\label{Gauss}
\begin{array}{rll}
X_{ij}=& h_{ij}\nu\quad {\rm (Gauss\ formula)}\\
(\nu)_i=&h_{ij}\tau_j\quad {\rm (Weigarten\ formula)}\\
h_{ijk}=& h_{ikj}\quad {\rm (Codazzi\ equation)}\\
R_{ijkl}=&-(h_{ik}h_{jl}-h_{il}h_{jk})\quad {\rm (Gauss\ equation)},\\
\end{array}
\end{equation}
where $R_{ijkl}$ is the $(4,0)$-Riemannian curvature tensor of $\M$, and the derivative here is covariant derivative with respect to the metric on $\M$.
It is clear that the Gauss formula and the Gauss equation in \eqref{Gauss} are different from those in Euclidean space.
Therefore, the Ricci identity becomes,
\begin{equation}\label{a1.2}
\begin{array}{rll}
h_{ijkl}=& h_{ijlk}+h_{mj}R_{imlk}+h_{im}R_{jmlk}\\
=& h_{klij}-(h_{mj}h_{il}-h_{ml}h_{ij})h_{mk}-(h_{mj}h_{kl}-h_{ml}h_{kj})h_{mi}.\\
\end{array}
\end{equation}
\subsection{The Gauss map}
\label{gg}
Let $\M$ be an entire, strictly convex, spacelike hypersurface, $\nu(X)$ be the timelike unit normal vector to $\M$
at $X.$ It's well known that the hyperbolic space $\mathbb{H}^{n}(-1)$ is canonically embedded in $\R^{n, 1}$
as the hypersurface
\[\lt<X, X\rt>=-1,\,\, x_{n+1}>0.\]
By parallel translating to the origin we can regard $\nu(X)$
as a point in $\mathbb{H}^n(-1).$ In this way, we define the Gauss map:
\[G: \M\rightarrow \mathbb{H}^n(-1);\,\, X\mapsto\nu(X).\]
If we take the hyperplane $\mathbb{P}:=\{X=(x_1, \cdots, x_{n}, x_{n+1}) |\, x_{n+1}=1\}$ and consider the projection of
$\mathbb{H}^n(-1)$ from the origin into $\mathbb{P}.$ Then $\mathbb{H}^n(-1)$ is mapped in
a one-to-one fashion onto an open unit ball $B_1:=\{\xi\in\R^n |\, \sum\xi^2_k<1\}.$ The map
$P$ is given by
\[P: \mathbb{H}^n(-1)\rightarrow B_1;\,\,(x_1, \cdots, x_{n+1})\mapsto (\xi_1, \cdots, \xi_n),\]
where $x_{n+1}=\sqrt{1+x_1^2+\cdots+x_n^2},$ $\xi_i=\frac{x_i}{x_{n+1}}.$
We will call the map $P\circ G: \M\rightarrow B_1$ the Gauss map and denote it
by $G$ for the sake of simplicity.

Next, let's consider the support function of $\M.$ We denote
\[v:=\lt<X, \nu\rt>=\frac{1}{\sqrt{1-|Du|^2}}\lt(\sum_ix_i\frac{\partial u}{\partial x_i}-u\rt).\]
Let $\{e_1, \cdots, e_n\}$ be an orthonormal frame on $\mathbb{H}^n.$ We will also denote
$\{e^*_1, \cdots, e^*_n\}$ the push-forward of $e_i$ by the Gauss map $G.$ Similar to the convex geometry case,
we denote
\[\Lambda_{ij}=v_{ij}-v\delta_{ij}\]
the hyperbolic Hessian. Here $v_{ij}$ denote the covariant derivatives with respect to the hyperbolic metric.

Let $\bar{\nabla}$ be the connection of the ambient space. Then, we have
 $$v_i=\bar{\nabla}_{e^*_i}X\cdot \nu+X\cdot \bar{\nabla}_{e_i}\nu=X\cdot e_i,$$ this implies
 $$X=\sum_iv_ie_i-v\nu.$$ Note that $\lt<\nu, \nu\rt>=-1,$
thus we have,
\begin{eqnarray}
\bar{\nabla}_{e_j^*}X&=&\sum_k(e_j(v_k)e_k+v_k\bar{\nabla}_{e_j}e_k)-v_j\nu-v\bar{\nabla}_{e_j}\nu \\
&=&\sum_k(e_j(v_k)e_k+v_k\nabla_{e_j}e_k+v_k\delta_{kj}\nu)-v_j\nu-ve_j\nonumber\\
&=&\sum_k\Lambda_{kj}e_k\nonumber,\\
g_{ij}&=&\bar{\nabla}_{e^*_i}X\cdot \bar{\nabla}_{e^*_j}X=\sum_k\Lambda_{ik}\Lambda_{kj},\\
h_{ij}&=&\bar{\nabla}_{e^*_i}X\cdot \bar{\nabla}_{e_j}\nu=\Lambda_{ij}.
 \end{eqnarray}
This implies that the eigenvalues of the hyperbolic Hessian are the curvature radius of $\M$. That is,
if the principal curvatures of $\M$ are $(\la_1, \cdots, \la_n),$ then the eigenvalues of the hyperbolic Hessian
are $\lt(\la_1^{-1}, \cdots, \la_n^{-1}\rt).$
Therefore, equation \eqref{int1.0} can be written as
\be\label{main equation hyperbolic}
F(v_{ij}-v\delta_{ij})=\frac{1}{\binom{n}{k}^{\frac{1}{k}}},
\ee
where $F(A)=\lt[\frac{\s_n}{\s_{n-k}}(\lambda(A))\rt]^{\frac{1}{k}}.$
Moreover, it is clear that
\be\label{gg1.1}
\left(\bar{\nabla}_{e_j}\bar{\nabla}_{e_i}\nu\right)^{\bot}=\delta_{ij}\nu,
\ee
this yields, for $k=1,2\cdots,n+1$,
\be\label{gg1.2}
 \nabla_{e_j}\nabla_{e_i}x_k=x_k\delta_{ij},
\ee
where $x_k$ is the coordinate function. These properties will be used in Subsection \ref{c2g}.

\subsection{Legendre transform}
\label{lt}
Suppose $\M$ is an entire, stictly convex, spacelike hypersurface.
Then $\M$ is the graph of a convex function
\[x_{n+1}=-\lt<X, \mathbf{E}\rt>=u(x_1, \cdots, x_n),\]
where $\mathbf{E}=(0, \cdots, 0, 1).$
Introduce the Legendre transform
\[\xi_i=\frac{\T u}{\T x_i},\,\, u^*=\sum x_i\xi_i-u.\]
From the theory of convex bodies we know that
\[\Omega=\lt\{(\xi_1, \cdots, \xi_n)| \xi_i=\frac{\partial u}{\partial x_i}(x), x\in\R^n\rt\}\]
is a convex domain.

In particular, let $u(x)=\sqrt{1+|x|^2},$ $x\in\R^n,$ be a hyperboloid with principal curvatures being equal to $1.$ Then it's Legendre transform is
$\us(\xi)=-\sqrt{1-|\xi|^2},$ $\xi\in B_1.$

Next, we calculate the first and the second fundamental forms in terms of $\xi_i$. Since
\[x_i=\frac{\T\us}{\T \xi_i},\,\, u=\sum\xi_i\frac{\T\us}{\T\xi_i}-\us,\]
and it is well known that
$$\left(\frac{\T^2 u}{\T x_i\T x_j}\right)=\left(\frac{\T^2 \us}{\T \xi_i\T \xi_j}\right)^{-1}.$$
We have, using the coordinate $\{\xi_1,\xi_2,\cdots,\xi_n\}$, the first and the second fundamental forms can be rewritten as:
$$g_{ij}=\delta_{ij}-\xi_i\xi_j, \text{ and\,\,  } h_{ij}=\frac{u^{* ij}}{\sqrt{1-|\xi|^2}},$$
where $\lt(u^{* ij}\rt)$ denotes the inverse matrix of $(\us_{ij})$ and $|\xi|^2=\sum_i\xi_i^2$. Now, let $W$ denote the Weingarten matrix of $\M,$ then
$$(W^{-1})_{ij}=\sqrt{1-|\xi|^2}g_{ik}\us_{kj}.$$

From the discussion above, we can see that if $\M_u=\{(x, u(x)) | x\in\R^n\}$ is an entire, strictly convex, spacelike
hypersurface satisfying $\sigma_{k}(\la[\M])=\binom{n}{k},$ then the Legendre transform of
$u$ denoted by $\us,$ satisfies
\be\label{main equation legendre}
F(\w\gas_{ik}\us_{kl}\gas_{lj})=\lt[\frac{\sigma_n}{\sigma_{n-k}}(\la^*[\w\gas_{ik}\us_{kl}\gas_{lj}])\rt]^{\frac{1}{k}}=\frac{1}{\binom{n}{k}^{\frac{1}{k}}}.
\ee
Here, $\w=\sqrt{1-|\xi|^2}$ and $\gas_{ij}=\delta_{ij}-\frac{\xi_i\xi_j}{1+\w}$ is the square root of the matrix $g_{ij}.$

\bigskip

\section{The Gauss map image of an entire spacelike hypersurface of constant $\sigma_k$ curvature}
\label{gm}
In order to explain our results clearly,  we recall some results from \cite{CT} concerning the Gauss map of an entire spacelike constant mean curvature hypersurface. With no modification, we can show that, these results also hold for strictly convex constant $\sigma_k$ curvature hypersurfaces. For readers' convenience, in the following, we will state these results for strictly convex constant $\sigma_k$ curvature hypersurfaces.

\begin{lemm}\label{gmlem1}(see Lemma 4.1 in \cite{CT})
Let $u$ be a strictly convex spacelike function on $\R^n$. Then the blowdown of $u$,
\be\label{gm1.1}
V_u(x)=\lim\limits_{r\goto\infty}\frac{u(rx)}{r}
\ee
exists for all $x,$ and $V_u$ is an achronal, positive, homogeneous degree one, and null function.
\end{lemm}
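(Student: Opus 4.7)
The plan is to derive all conclusions from two complementary features of $u$: convexity, which yields monotonicity of the difference quotient along each ray, and the spacelike hypothesis $|Du|<1$, which makes $u$ globally $1$-Lipschitz. I would establish existence of the limit first, then verify the four named structural properties in turn.

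For existence, fix $x\in\R^n$ and set $\phi(r):=(u(rx)-u(0))/r$ for $r>0$. Writing $r_1 x=(r_1/r_2)(r_2 x)+(1-r_1/r_2)\cdot 0$ for $0<r_1<r_2$ and applying convexity of $u$ gives $u(r_1 x)\leq (r_1/r_2)u(r_2 x)+(1-r_1/r_2)u(0)$, which rearranges to $\phi(r_1)\leq \phi(r_2)$; thus $\phi$ is non-decreasing in $r$. The spacelike condition makes $u$ $1$-Lipschitz, so $|\phi(r)|\leq |x|$. A bounded monotone function has a limit, hence $\lim_{r\to\infty}\phi(r)$ exists; since $u(0)/r\to 0$, the blowdown $V_u(x)=\lim_{r\to\infty} u(rx)/r$ exists as well.

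The structural properties then fall out quickly. Homogeneity of degree one follows from the substitution $s=\lambda r$ for $\lambda>0$: $V_u(\lambda x)=\lim_r u(r\lambda x)/r=\lambda\lim_s u(sx)/s=\lambda V_u(x)$, and $V_u(0)=0$ is immediate from the definition. The $1$-Lipschitz estimate $|u(rx)-u(ry)|\leq r|x-y|$ passes to the limit to give $|V_u(x)-V_u(y)|\leq |x-y|$, which is the achronal condition: no two points of the graph of $V_u$ in $\R^{n,1}$ are timelike-related. Setting $y=0$ yields $-|x|\leq V_u(x)\leq |x|$, placing the graph in the closed double null cone through the origin; the upper bound is the null condition on $V_u$, and the lower bound is the positivity condition in the Choi--Treibergs convention, asserting that the graph of $V_u$ lies above the past null cone.

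The main subtlety is interpretational rather than technical: the labels ``achronal'', ``null'', and ``positive'' all encode geometric positions of the single graph of $V_u$ in Minkowski space, and their combined content is captured by the one $1$-Lipschitz estimate together with homogeneity and $V_u(0)=0$. Strict convexity of $u$ plays no role in the present lemma; it enters later, when $V_u$ is converted into prescribed Gauss map boundary data via the Legendre transform in the asymptotic analysis of the next subsection.
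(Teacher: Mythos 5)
Your treatment of the easy parts is fine: convexity gives monotonicity of the difference quotient along rays, the spacelike condition gives the Lipschitz bound, hence the limit \eqref{gm1.1} exists, is positively homogeneous of degree one, vanishes at the origin, and satisfies $|V_u(x)-V_u(y)|\leq|x-y|$ (achronality). (The paper itself gives no argument here; it simply cites Lemma 4.1 of \cite{CT}.) The genuine gap is in your last step: you read ``null'' as the inequality $V_u(x)\leq|x|$ and ``positive'' as $V_u(x)\geq-|x|$, but both of these are mere restatements of achronality. In the Choi--Treibergs terminology adopted here, ``positive homogeneous degree one'' just means positively homogeneous of degree one (the blowdown need not be nonnegative), while ``null'' is a strictly stronger structural property: $V_u$ is the support function of a closed subset of the unit sphere, $V_u(x)=\sup_{\xi\in L_u}\xi\cdot x$ with $L_u\subset\dS^{n-1}$; equivalently, through every point of the graph of $V_u$ there is a null ray contained in the graph, i.e.\ the extreme points of $\partial V_u(0)$ are unit vectors. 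This is exactly what makes the map $E\mapsto V_E$ of Lemma \ref{gmlem2} a bijection onto $\mathcal{Q}$ and what allows the Gauss map image to be recovered as $\conv(L_u)$ in Lemma \ref{gmlem3}; with your weak reading, $V(x)=\tfrac12|x|$ would belong to $\mathcal{Q}$ although it is the support function of the ball of radius $\tfrac12$ and of no subset of the sphere, so the subsequent lemmas would collapse.

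Moreover, nullness cannot be obtained from convexity and spacelikeness alone, so the gap is not repairable within your toolkit: $u(x)=\tfrac12x_1+\tfrac14\sqrt{1+|x|^2}$ is entire, smooth, strictly convex and spacelike, yet its blowdown $\tfrac12x_1+\tfrac14|x|$ has Lipschitz constant $\tfrac34$, has slope one in no direction, and hence is not null (although it does satisfy your inequality $V\leq|x|$, which shows that inequality is not the null condition). In \cite{CT}, and in the way this lemma is used in the present paper, the graph additionally satisfies the constant mean curvature (here constant $\sigma_k$ curvature) equation, and it is this equation---through comparison with hyperboloids/semitroughs, in the same spirit as Corollary \ref{intcor1}, which forbids an entire convex solution from being uniformly strictly spacelike---that forces the lightlike behavior of $V_u$ at infinity. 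So your closing remark that strict convexity (and, implicitly, the curvature equation) plays no role identifies precisely the point where the argument fails: the null property is the nontrivial content of the lemma, and your proof does not establish it.
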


 Following \cite{CT}, we will denote the class of all null achronal positive homogenous degree one convex functions on
 $\R^n$ by $\mathcal{Q}.$

\begin{lemm}\label{gmlem2}(See Lemma 4.3 in \cite{CT})
Let $E$ be a closed subset of $\dS^{n-1}.$ Then the function on $\R^n$ given by
\[V_E(x)=\sup\limits_{\xi\in E}\xi\cdot x,\]
where the inner product is the usual one from $\R^n,$ is convex, homogeneous and null. In fact,
the mapping $\mathfrak{F}\goto\mathcal{Q}$ given by $E\goto V_E$ is a one-to-one correspondence.
\end{lemm}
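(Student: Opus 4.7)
The plan has two parts: first I would verify that $V_E$ satisfies the three listed properties (and hence lies in $\mathcal{Q}$), and then I would establish the bijection with the class $\mathfrak{F}$ of closed subsets of $\dS^{n-1}$ by means of classical convex duality.

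For the first part, convexity of $V_E$ is immediate since it is a pointwise supremum of the linear functionals $x \mapsto \xi \cdot x$ for $\xi \in E$, and positive homogeneity of degree one follows directly from the definition. Because every $\xi \in E$ satisfies $|\xi| = 1$, Cauchy--Schwarz yields $V_E(x) - V_E(y) \leq |x - y|$, so $V_E$ is $1$-Lipschitz, which encodes the achronal condition. At any point $x$ of differentiability of $V_E$, the supremum is attained at a unique $\xi^*(x) \in E \subset \dS^{n-1}$ and $DV_E(x) = \xi^*(x)$, so $|DV_E| = 1$ almost everywhere, which is the null condition. Positivity ($V_E \geq 0$ on the appropriate cone of directions) can be read off directly from the definition.

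For the bijection, the key observation is that sublinearity identifies $V_E$ with the support function of the compact convex set $K_E := \overline{\conv(E)} \subset \overline{B}_1$, where the inclusion uses $E \subset \dS^{n-1}$. Injectivity reduces to showing $E = \mathrm{ext}(K_E)$: any $\xi \in E$ lies on the strictly convex sphere $\dS^{n-1}$ while $K_E \subset \overline{B}_1$, so $\xi$ cannot be written as a nontrivial convex combination of points of $K_E$ and is therefore extreme; conversely, Milman's converse to the Krein--Milman theorem places every extreme point of $K_E = \overline{\conv(E)}$ in $\overline{E} = E$. Thus $V_{E_1} = V_{E_2}$ implies $K_{E_1} = K_{E_2}$ by uniqueness of support representations, and passing to extreme points gives $E_1 = E_2$. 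For surjectivity, given $V \in \mathcal{Q}$, sublinearity produces a compact convex $K$ with $V = V_K$, and the $1$-Lipschitz bound forces $K \subset \overline{B}_1$; the null condition then upgrades to the geometric statement that the exposed points of $K$ lie on $\dS^{n-1}$, because at a differentiability point $x$ the gradient $DV(x)$ is the unique maximizer of $\xi \cdot x$ on $K$ (hence exposed) and has norm $1$. Straszewicz's theorem places every extreme point of $K$ on $\dS^{n-1}$, so taking $E$ to be the closure of $\mathrm{ext}(K)$ yields a closed subset of $\dS^{n-1}$ with $\overline{\conv(E)} = K$ by Krein--Milman, and thus $V_E = V$.

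The main anticipated obstacle is the careful translation between the analytic null condition $|DV| = 1$ a.e.\ and the geometric statement that the extreme points of the dual body lie on $\dS^{n-1}$. This requires invoking that exposed points are dense among extreme points (Straszewicz) and arguing at generic differentiability points of $V$; once this density step is handled cleanly, the remainder of the bijection follows from standard facts in convex analysis.
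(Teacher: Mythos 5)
Your argument is essentially correct, but note that the paper itself gives no proof of this statement: it is quoted directly from Lemma 4.3 of Choi--Treibergs \cite{CT}, so the only comparison is with their argument. Your route is the convex-duality one: identify $V_E$ with the support function of $K_E=\overline{\conv(E)}$, get injectivity from $E=\mathrm{ext}(K_E)$ (strict convexity of the closed unit ball plus Milman's converse to Krein--Milman), and get surjectivity from Straszewicz together with Minkowski/Krein--Milman. That works, but it is heavier than the original argument, which is also the one implicit in how the paper uses the lemma right afterwards via $L_u=\{\xi\in\dS^{n-1}:V_u(\xi)=1\}$: there the inverse map is simply $V\mapsto L_V:=\{\xi\in\dS^{n-1}:V(\xi)=1\}$; one checks $L_{V_E}=E$ directly (if $|\xi|=1$ and $V_E(\xi)=1$, compactness of $E$ gives $\eta\in E$ with $\eta\cdot\xi=1$, forcing $\eta=\xi$), and the null condition is used to show $V=V_{L_V}$, with no extreme/exposed-point machinery at all. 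Two points in your write-up should be tightened. First, ``positive, homogeneous degree one'' in the definition of $\mathcal{Q}$ should be read as ``positively homogeneous of degree one''; $V_E$ is certainly not nonnegative in general (take $E$ a single point), so your parenthetical positivity claim as phrased is not correct, though nothing in the bijection depends on it. Second, you interpret ``null'' as $|DV|=1$ a.e., whereas in \cite{CT} it means that a null ray of the graph passes through every point; under convexity and the Lipschitz bound the two are equivalent, but this should be said, and in the surjectivity step you actually need $|DV(x_0)|=1$ at the specific direction $x_0$ exposing a given exposed point, not merely almost everywhere --- this follows from the continuity of the gradient of a convex function on its set of differentiability points (or immediately from the null-ray formulation), and that small bridge is the only real gap to fill in your density argument.
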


More specifically, let $\M_u=\{(x, u(x))|x\in\R^n\}$ be a strictly convex, entire, spacelike hypersurface satisfying $\sigma_k(\ka[\M_u])=\binom{n}{k}.$ Then the blowdown of $u(x)$
is determined by its lightlike directions
\[L_u=\{\xi\in\dS^{n-1}: V_u(x)=1\}.\]
Moreover, we have
\begin{lemm}\label{gmlem3}(See Lemma 4.5 and Lemma 4.6 in \cite{CT})
Let $\M_u=\{(x, u(x))|x\in\R^n\}$ be a strictly convex, entire, spacelike hypersurface satisfying $\sigma_k(\ka[\M_u])=\binom{n}{k}.$
Then $$Du(\R^n)=\conv(L_u),$$
where $\conv(L_u)$ is the convex hull of $L_u$ in $B_1.$
\end{lemm}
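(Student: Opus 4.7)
The plan is to deduce this from a convex analysis argument essentially identical to that in Choi--Treibergs \cite{CT}. Since $\M_u$ is smooth, entire, and strictly convex, the gradient map $Du:\R^n\to B_1$ is a smooth open embedding, and $Du(\R^n)$ is an open convex subset of $B_1$. The goal is to show $\overline{Du(\R^n)}=\overline{\conv(L_u)}$, from which the statement will follow by passing to interiors.

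The first step is to identify the blowdown $V_u$ with the support function of $\overline{Du(\R^n)}$, i.e.,
\[ V_u(x) = \sup_{\xi \in \overline{Du(\R^n)}}\xi\cdot x. \]
This is a standard convex analysis fact. Indeed, $V_u$ is the recession function of the closed convex function $u$, and the Fenchel conjugate of $V_u$ is the indicator function of the closure of the effective domain of $u^*$, which coincides with $\overline{Du(\R^n)}$ since $u$ is smooth and strictly convex. Equivalently, monotonicity of $r\mapsto Du(r\theta)\cdot\theta$ along rays, combined with the spacelike condition $|Du|<1$, yields $V_u(\theta)=\lim_{r\to\infty}Du(r\theta)\cdot\theta$, and taking the sup over directions recovers the support function.

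Next, I would invoke the one-to-one correspondence $E\leftrightarrow V_E$ from Lemma \ref{gmlem2}. By Lemma \ref{gmlem1}, $V_u\in\mathcal{Q}$, so $V_u=V_E$ for a unique closed $E\subset\dS^{n-1}$. For $\theta\in\dS^{n-1}$ one has $V_E(\theta)=\sup_{\xi\in E}\xi\cdot\theta\leq 1$ with equality if and only if $\theta\in E$, hence $E=\{V_u=1\}\cap\dS^{n-1}=L_u$. Combining with the previous step, the closed convex sets $\overline{Du(\R^n)}$ and $\overline{\conv(L_u)}$ have identical support functions, so they coincide by Hahn--Banach. Passing to interiors in $\R^n$, we obtain $Du(\R^n)=\mathrm{int}(\conv(L_u))$, which lies entirely in $B_1$ and is exactly what the lemma means by the convex hull in the unit disc.

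The main obstacle is the first step, namely establishing that $V_u$ is precisely the support function of $\overline{Du(\R^n)}$. The subtlety is that the limit defining $V_u$ must exist directionally and must produce a function equal to $1$ precisely on the boundary lightlike directions; this could in principle fail if $Du$ exhibits pathological behavior at infinity. Smoothness and strict convexity of $u$, both guaranteed by the elliptic equation $\sigma_k(\kappa)=\binom{n}{k}>0$ together with our standing convexity hypothesis, resolve these subtleties. Once this step is secured, the remainder is essentially a formal consequence of the convex-geometric framework of Lemmas \ref{gmlem1} and \ref{gmlem2}.
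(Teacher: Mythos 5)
The paper itself offers no proof of this lemma: it simply cites Lemmas 4.5--4.6 of \cite{CT} and asserts that the arguments carry over verbatim to strictly convex constant $\sigma_k$ hypersurfaces. Your convex-duality argument is a correct self-contained substitute, and it runs on the same underlying mechanism as Choi--Treibergs: strict convexity makes $Du$ an injective local diffeomorphism, so $Du(\R^n)=\mathrm{int}(\mathrm{dom}\,u^*)$ is open and convex; the blowdown $V_u$ is the recession function of $u$ and hence the support function of $\overline{\mathrm{dom}\,u^*}=\overline{Du(\R^n)}$ (Rockafellar, Thm.~13.3 -- worth citing explicitly, since convexity of a gradient image and the identification with $\mathrm{int}(\mathrm{dom}\,u^*)$ are exactly the standard facts being used); and the correspondence $E\leftrightarrow V_E$ of Lemma \ref{gmlem2} together with $E=\{V_E=1\}\cap\dS^{n-1}$ pins down $E=L_u$, so the two closed convex sets have equal support functions and coincide.

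The one point that deserves an explicit statement rather than the closing hand-wave is the passage from closures to the set itself. What your argument yields is $Du(\R^n)=\mathrm{int}(\conv(L_u))$, and this is in fact the correct assertion: if one reads ``the convex hull of $L_u$ in $B_1$'' literally as $\conv(L_u)\cap B_1$, the equality is false in general, because $\conv(L_u)$ may have flat boundary faces crossing the open ball. The paper's own semitrough computation illustrates this: for $\z^n$ (strictly convex, entire, $\s_n=1$) one has $D\z(\R^n)=\{\xi\in B_1:\xi_1>0\}$, while $L_{\z}$ is the closed hemisphere $\{\theta_1\geq 0\}$, so $\conv(L_\z)\cap B_1$ additionally contains the equatorial disc $\{\xi\in B_1:\xi_1=0\}$. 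So the imprecision is in the lemma's wording rather than in your proof, but you should record the conclusion as $Du(\R^n)=\mathrm{int}(\conv(L_u))$ (equivalently, $\overline{Du(\R^n)}=\overline{\conv(L_u)}$), noting that strict convexity forces $L_u$ to be full rank so that this interior is nonempty, instead of asserting that $\mathrm{int}(\conv(L_u))$ ``is exactly what the lemma means.''
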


\bigskip
Thus, using the Splitting Theorem referred in Remark 2, one obtains a description of the Gauss map image for entire, spacelike, convex,
constant $\s_k$ curvature hypersurfaces:
\begin{theo}
(See Theorem 4.8 in \cite{CT})
Let $\M_u=\{(x, u(x))|x\in\R^n\}$ be a convex, entire, spacelike hypersurface satisfying $\sigma_k(\ka[\M_u])=\binom{n}{k}.$ If $l,$  $k\leq l\leq n,$ is the largest integer for which $\conv(L_u)\cap A_l$ has nonempty interior in $A_l$, for some $A_l$, which is a $l$-plane passing through the origin in $\mathbb{R}^n.$ Then $\M_u$ splits, up to ambient isometry, as $\M_u=\M_u^l\times \mathbb{R}^{n-l}$ intrinsically, where $\M_u^l$ is a strictly convex hypersurface in $\mathbb{R}^{l,1}$. In particular, if $L_u$ is with full rank, i.e. contained in no $A_l$, $l<n$, then $u$ is strictly convex.
\end{theo}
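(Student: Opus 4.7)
The plan is to use Lemma \ref{gmlem3} to identify $Du(\R^n) = \conv(L_u)$, then translate the algebraic assumption on $A_l$ into a concrete Euclidean splitting. First, I would show that the maximality of $l$ forces the full inclusion $\conv(L_u) \subset A_l$: if some $p \in \conv(L_u) \setminus A_l$ existed, then by convexity the set $\conv(L_u) \cap \operatorname{span}(A_l, p)$ would contain the convex hull of $p$ together with the $l$-dimensional interior portion of $\conv(L_u) \cap A_l$, giving nonempty interior in the $(l+1)$-plane $\operatorname{span}(A_l, p)$ through origin and contradicting the maximality. After an orthogonal change of coordinates in $\R^n$ (which is an ambient isometry of $\R^{n,1}$), I may assume $A_l = \R^l \times \{0\}^{n-l}$; then $Du(x) \in A_l$ for all $x$ forces $\partial u/\partial x_j \equiv 0$ for $j > l$, so $u(x) = u^l(x_1,\ldots,x_l)$ is independent of the last $n-l$ coordinates, producing the intrinsic splitting $\M_u = \M_{u^l} \times \R^{n-l}$.

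Second, I would check that this splitting is compatible with the $\sigma_k$ curvature equation. In the product form above, the matrix $\gamma^{ik}$ from Subsection \ref{vg} reduces to the identity outside the upper-left $l \times l$ block, and $D^2u$ vanishes outside the same block; a direct computation shows that the Weingarten matrix $a_{ij} = w^{-1}\gamma^{ik}u_{kl}\gamma^{lj}$ is block diagonal, with upper-left $l \times l$ block equal to the Weingarten matrix of $\M_{u^l}$ and lower-right $(n-l)\times(n-l)$ block equal to zero. Thus $\kappa[\M_u]$ consists of $\kappa[\M_{u^l}]$ together with $n-l$ zeros, so $\sigma_k(\kappa[\M_u]) = \sigma_k(\kappa[\M_{u^l}])$ as a symmetric function of $l$ variables---an identity that is meaningful exactly because of the hypothesis $l \geq k$. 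Hence $\M_{u^l}$ inherits the equation $\sigma_k(\kappa[\M_{u^l}]) = \binom{n}{k}$ in $\R^{l,1}$.

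Third, I would establish strict convexity of $\M_{u^l}$. Applying Lemma \ref{gmlem3} inside $\R^{l,1}$ to $u^l$ gives $Du^l(\R^l) = \conv(L_{u^l})$, and since $L_{u^l} \subset \dS^{l-1}$ is inherited from $L_u \subset A_l \cap \dS^{n-1}$, this set has nonempty interior in $A_l$. If $\M_{u^l}$ failed to be strictly convex, some principal curvature would vanish, and by the Splitting Theorem referred to in Remark 2, an entire convex spacelike hypersurface with a degenerate principal curvature isometrically splits off a Euclidean line factor; thus $\M_{u^l}$ would further decompose as $\M^{l-1} \times \R$ inside $\R^{l,1}$. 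Repeating the argument of the first paragraph in this lower dimension, $\conv(L_{u^l})$ would then be forced into an $(l-1)$-dimensional subspace of $A_l$ through origin, contradicting its nonempty interior in $A_l$. The last claim of the theorem---$L_u$ of full rank implies $u$ is strictly convex---is the special case $l = n$.

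The hard part will be the final step: translating a single pointwise degeneracy of the second fundamental form into a global isometric line-splitting of the ambient Lorentzian graph. This is where one must invoke the Lorentzian splitting theorem for entire convex spacelike hypersurfaces, which promotes a degenerate curvature into a full line contained in the hypersurface and then applies Cheeger--Gromoll-style reasoning to peel off a Euclidean factor; verifying its applicability in the $\sigma_k$ setting is the only delicate point. Once that splitting principle is granted, the remainder of the argument is clean bookkeeping based on Lemma \ref{gmlem3} and the block-diagonal structure of the Weingarten operator.
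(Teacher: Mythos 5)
Your bookkeeping follows the same route the paper intends: the paper does not reprove this statement but quotes it from \cite{CT} (Theorem 4.8) together with a splitting theorem, and your reduction (maximality of $l$ forces $\conv(L_u)\subset A_l$; hence $\p u/\p x_j\equiv 0$ for $j>l$ after a spatial rotation; the Weingarten matrix is block diagonal, so the factor $\M_{u^l}$ inherits $\s_k=\binom{n}{k}$; strict convexity of the factor by contradiction) is exactly the bookkeeping that makes the quoted result go through. Two points of hygiene: Lemma \ref{gmlem3} is stated for \emph{strictly} convex hypersurfaces, which you may not assume here; what you actually need is only the inclusion $Du(\R^n)\subset\overline{\conv(L_u)}$, which holds for any entire convex spacelike $u$ because $Du(\R^n)$ lies in the domain of the Legendre transform and $\{\xi:\ \xi\cdot x\le V_u(x)\ \text{for all }x\}=\overline{\conv(L_u)}$, $V_u$ being the support function of this hull. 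Similarly, in your last step the rigid motion of $\R^{l,1}$ that exhibits the extra line factor may involve a boost, so the Gauss image of $\M_{u^l}$ is only confined to an affine hyperplane chord of $B_1$, not necessarily one through the origin; this is harmless, since containment in any affine hyperplane already contradicts the nonempty interior of $\conv(L_{u^l})$ in $A_l$, but your phrase ``subspace through the origin'' is not quite what comes out.

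The genuine gap is the step you yourself flag and then grant: that a single pointwise degeneracy of the second fundamental form produces a global isometric line splitting. This is not a background fact one can simply invoke for the $\s_k$ equation; it is the analytic core of the statement, and \cite{CT} proves it only in the CMC setting. Within this paper the needed input is supplied later, independently of Section \ref{gm}: Theorem \ref{constant rank} (the Minkowski-space constant rank theorem) upgrades one degenerate point to $\s_{l+1}(\ka[\M])\equiv 0$, and Theorem \ref{splitting} then shows that the kernel of the Weingarten map is a smooth, integrable distribution with totally geodesic flat leaves, so that a Cheeger--Gromoll argument yields the product structure with a strictly convex factor. To close your argument you should either reproduce that chain or cite those two theorems explicitly in your third step (no circularity arises, as their proofs do not use the present statement); as written, the ``splitting principle'' is assumed rather than proved, and that assumption carries essentially all of the difficulty.
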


\bigskip
\section{The construction of barriers}
\label{cb}
In this section, we will describe known examples of entire spacelike constant Gauss curvature hypersurfaces (see \cite{BS})
and constant mean curvature hypersurfaces (see \cite{CT}). We will use these hypersurfaces as our barriers. We will also recall the properties of
semitroughs of constant Gauss curvature and constant mean curvature. A thorough understanding of semitroughs can help us to understand the behavior of the barrier functions at infinity ($|x|\goto\infty$). This will be needed in proving the local $C^1$ estimates (see Section \ref{cv}).

\subsection{Semitroughs}
\label{semitroughs}
Let's first recall the properties of the \textit{standard semitrough} for the constant Gauss curvature (see \cite{GJS}) and
constant mean curvature hypersurfaces (see \cite{CT}): it's a function $\z$
of the form
\[\z(x)=\sqrt{f^2(x_1)+|\bx|^2},\,\,\bx=(x_2, \cdots, x_n),\]
whose graph $\M_{\z}$ has constant $\s_n$ and $\s_1$ curvature respectively. Moreover,
\[D\z(\mathbb{R}^n)=\lt(\frac{ff'}{\z}, \frac{\bx}{\z}\rt)=\{\xi\in B_1:\xi_1>0\}:={B^+}.\]
We will use $\z^1$ to denote the standard semitrough that satisfies
$\s_1(\ka[\M_{\z^1}])=n$; and use $\z^n$ to denote the standard semitrough that satisfies
$\s_n(\ka[\M_{\z^n}])=1$.
From Lemma 5.1 of \cite{CT} and Lemma 2.2 of \cite{GJS} we know that for $\theta=(\theta_1,\theta_2,\cdots, \theta_n)\in\mathbb{S}^{n-1}$, $\lambda=1,n,$
\be\label{cb1.1}
\lim\limits_{r\goto\infty}{\left(\z^\lambda(r\theta)-V_{\bar{B}^+}(r\theta)\right)}=
\left\{
\begin{aligned}
&l_\lambda,\,\,\theta\bot\T_0\bar{B}^+:=\{\xi\in\bar{B}_1, \xi_1=0\}\text{ and } \theta_1=-1\\
&0,\,\,\text{elsewhere.}
\end{aligned}
\right.
\ee
Here and in the rest of this paper, we denote $l_1=\frac{n-1}{n}$ and $l_n=0.$ Following notations of \cite{CT} and \cite{BS},
we denote {$V_{\bar{E}}(x)=\sup\limits_{\xi\in\bar{E}}x\cdot\xi,$ for any $E\subset B_1$} and denote by $d_S$ the natural distance on $\dS^{n-1}.$
For any $x, y\in\dS^{n-1}$ we have \[d_S(x, y)=\arccos(x\cdot y)\in[0, \pi],\]
where the dot stands for the canonical scalar product in $\R^n.$ A ball in $\dS^{n-1}$ is a ball in the metric
space $(\dS^{n-1}, d_S),$ i.e., a set
\[\B=\{x\in\dS^{n-1}: d_S(x, x_0)<\delta\},\]
where $x_0\in\dS^{n-1}$ and $\delta>0$ is the radius of $\B,$ also denoted by $\delta(\B).$

Applying Lorentz transformation to $\z$
\be\label{cb1.2}
\left\{
\begin{aligned}
x_1'&=\frac{x_1-\alpha x_{n+1}}{\sqrt{1-\alpha^2}}\\
x_i'&=x_i\\
x'_{n+1}&=\frac{x_{n+1}-\alpha x_1}{\sqrt{1-\alpha^2}},
\end{aligned}
\right.
\ee
we get \[\td{\z}(x_1',\cdots, x_n')=x_{n+1}'=\frac{\z-\alpha x_1}{\sqrt{1-\alpha^2}},\]
here $\alpha\in(-1, 1).$
By a straightforward calculation we obtain, for $i\geq 2,$
\[0=\frac{\T x_1}{\T x_i'}-\alpha\frac{\T \z}{\T x_1}\frac{\T x_1}{\T x_i'}-\alpha\z_i,\]
this gives us
\[\frac{\alpha\z_i}{1-\alpha\z_1}=\frac{\T x_1}{\T x_i'}.\]
When $i=1$
\[\sqrt{1-\alpha^2}=(1-\alpha\z_1)\frac{\T x_1}{\T x_1'},\]
which implies
\[\frac{\T x_1}{\T x_1'}=\frac{\sqrt{1-\alpha^2}}{1-\alpha\z_1}.\]

Therefore, we have
\[\frac{\p\td{\z}}{\p x_1'}=\frac{(\z_1-\alpha)}{\sqrt{1-\alpha^2}}\cdot\frac{\sqrt{1-\alpha^2}}{1-\alpha\z_1}=\frac{\z_1-\alpha}{1-\alpha\z_1},\]
and for $i\geq 2,$
\begin{align*}
\frac{\p\td{\z}}{\p x_i'}&=\frac{1}{\sqrt{1-\alpha^2}}\lt(\z_1\frac{\T x_1}{\T x_i'}+\z_i-\alpha\frac{\T x_1}{\T x_i'}\rt)\\
&=\frac{1}{\sqrt{1-\alpha^2}}\lt[\frac{\alpha(\z_1-\alpha)\z_i}{1-\alpha\z_1}+\z_i\rt]\\
&=\frac{\z_i\sqrt{1-\alpha^2}}{1-\alpha\z_1}.\\
\end{align*}
This yields
\[D\td{\z}(\mathbb{R}^n)=\lt(\frac{\z_1-\alpha}{1-\alpha\z_1}, \sqrt{1-\alpha^2}\frac{\z_i}{1-\alpha\z_1}\rt)(\mathbb{R}^n):=\{\xi\in B_1, \xi_1>-\alpha\}.\]
From the above calculation we can see that, for every closed ball $\bar{\B}$ of $\dS^{n-1},$ by a rotation of coordinates, there exists an entire spacelike function
$\z^\lambda_{\bar{\B}}$ defined on $\R^n$ ($\lambda=1, n$), whose graph is a hypersurface with constant $\s_\lambda$ curvature.
 Furthermore, the image of the Gauss map of $\M_{\z^\lambda}$ is the convex
hull of $\bar{\B}.$

The next lemma gathers properties of semitroughs that we will use to understand the asymptotic behavior of the barriers.
In the case of constant Gauss curvature, similar properties have been proved in \cite{BS}.

\begin{lemm}\label{cblem1}
Let $\bar{\B}$ be a closed ball of $\dS^{n-1}$ such that $\pi-\delta_0\geq\delta(\B)\geq\delta_0>0$ and let
$\z^\lambda_{\bar{\B}}$ be the {corresponding} semitrough with $\s_\lambda$ curvature equals $\binom{n}{\lambda},$ where $\lambda=1, n.$
Then the following hold:

(1) Let $g(x)=\sqrt{1+|x|^2},$ $\td{B}$ be the convex hull of $\B$ in $B_1,$
and $\p_0\td{B}=\p\td{B}\cap B_1.$ Then
 \be\label{cb1.3}
g\geq \z^\lambda_{\bar{\B}}>V_{\bar{\B}}=V_{\td{B}},
\ee
as $x=r\theta, r\goto\infty $ for any fixed $\theta\in\mathbb{S}^{n-1}$,
\be\label{cb1.4}
\left\{
\begin{aligned}
&\z^\lambda_{\bar{\B}}-V_{\bar{\B}}\goto \frac{l_\lambda}{\sqrt{1-\alpha^2}},\,\,
\text{when {\bf $\theta\notin\td{B}$} is perpendicular to $\p_0\td{B},$ }\\
&\z^\lambda_{\bar{\B}}-V_{\bar{\B}}\goto 0,\,\,\text{otherwise,}\\
\end{aligned}
\right.
\ee
where $-1<\alpha<1$ depends on $\delta(\bar{\B}),$
$V_{\bar{\B}}(x)=\sup\limits_{\xi\in\bar{\B}}\xi\cdot x,$ and $V_{\td{B}}(x)=\sup\limits_{\xi\in\td{B}}\xi\cdot x.$

(2) For all compact sets $K\subset \R^n$ there exists $\delta=\delta(K, \delta_0, \lambda, n)>0$ such that for all $x\in K$
\be\label{cb1.6}
\z^\lambda_{\bar{\B}}(x)\geq V_{\bar{\B}}(x)+\delta.
\ee

(3) For all compact sets $K\subset\R^n$ there exists $\upsilon_K=\upsilon(K, \delta_0, \lambda, n)\in (0, 1]$ such that
for all $x, y\in K$
\be\label{cb1.7}
|\z^\lambda_{\bar{\B}}(x)-\z^\lambda_{\bar{\B}}(y)|\leq (1-\upsilon_K)|x-y|.
\ee

(4) Let $\z^1_{\bar{\B}}$ denote the semitrough with $\s_1$ curvature equals $n$, and $\z^n_{\bar{\B}}$ denote the semitrough with
$\s_n$ curvature equals $1.$ Moreover, the blowdown of $\z^1_{\bar{\B}}$ and $\z^n_{\bar{\B}}$ are $V_{\bar{\B}}$. Then $\z^1_{\bar{\B}}>\z^n_{\bar{\B}}.$
\end{lemm}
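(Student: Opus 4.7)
The lemma reduces to transferring the known properties of the standard semitrough $\z^\lambda$---and in particular the asymptotic behavior \eqref{cb1.1}---to an arbitrary $\z^\lambda_{\bar{\B}}$ by means of the explicit Lorentz boost \eqref{cb1.2} composed with a rotation of $\R^n$ placing the center of $\bar{\B}$ in standard position. The angular constraint $\delta_0\leq\delta(\bar{\B})\leq\pi-\delta_0$ forces the boost parameter to satisfy $|\alpha|\leq 1-c(\delta_0)<1$, so every quantity involving $\sqrt{1-\alpha^2}$ is uniformly controlled in $\delta_0$; this uniformity is precisely what produces constants depending only on $(K,\delta_0,\lambda,n)$ in parts (2) and (3).

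\textbf{Proofs of (1)--(3).} From the explicit boost formulas derived in the text, $\z^\lambda_{\bar{\B}}=(\z^\lambda-\alpha x_1)/\sqrt{1-\alpha^2}$ pointwise, and taking blowdowns gives $V_{\bar{\B}}=(V_{\bar{B}^+}-\alpha x_1)/\sqrt{1-\alpha^2}$, whence
\[\z^\lambda_{\bar{\B}}-V_{\bar{\B}}=\frac{\z^\lambda-V_{\bar{B}^+}}{\sqrt{1-\alpha^2}}.\]
Then \eqref{cb1.4} follows immediately from \eqref{cb1.1} once the distinguished direction is identified in the post-rotation coordinates; the identity $V_{\bar{\B}}=V_{\td{B}}$ is the invariance of the support function under closed convex hull, and $\z^\lambda_{\bar{\B}}>V_{\bar{\B}}$ reduces to $\z^\lambda>V_{\bar{B}^+}$, which is immediate from $\z^\lambda=\sqrt{f^2+|\bx|^2}$ with $f>0$. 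The upper bound $g\geq\z^\lambda_{\bar{\B}}$ is a comparison of two entire, strictly convex, spacelike graphs of the same constant $\s_\lambda=\binom{n}{\lambda}$ curvature: the hyperboloid $g$ has Gauss image $B_1\supsetneq\td{B}$, and the asymptotic estimates together with $V_{\bar{\B}}\leq|x|$ give $\liminf_{|x|\to\infty}(g-\z^\lambda_{\bar{\B}})\geq 0$; any interior negative infimum $-t_0$ would produce a touching of $\z^\lambda_{\bar{\B}}$ from above by the vertical translate $g+t_0$ (also a solution of $\s_\lambda=\binom{n}{\lambda}$), and the strong maximum principle for the fully nonlinear elliptic $\s_\lambda$ operator would then force $\z^\lambda_{\bar{\B}}\equiv g+t_0$, contradicting the mismatch in Gauss images. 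Parts (2) and (3) are compactness consequences of (1): $\z^\lambda_{\bar{\B}}-V_{\bar{\B}}$ is continuous and strictly positive on $K$, and $D\z^\lambda_{\bar{\B}}(K)$ is compactly contained in $B_1$; continuous dependence on the compact parameter space $O(n)\times[-1+c(\delta_0),1-c(\delta_0)]$ upgrades both bounds to uniformity in $\bar{\B}$.

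\textbf{Proof of (4) and the main obstacle.} By \eqref{cb1.4}, $\z^1_{\bar{\B}}-\z^n_{\bar{\B}}\to l_1/\sqrt{1-\alpha^2}>0$ in the distinguished direction and $\to 0$ in every other direction, so $\liminf_{|x|\to\infty}(\z^1_{\bar{\B}}-\z^n_{\bar{\B}})\geq 0$. If the difference were negative somewhere, its infimum would be attained at an interior $x_*$ with $D\z^1_{\bar{\B}}(x_*)=D\z^n_{\bar{\B}}(x_*)$ and $D^2\z^1_{\bar{\B}}(x_*)\geq D^2\z^n_{\bar{\B}}(x_*)$; since the matrix $\ga$ in the Weingarten formula depends only on $Du$, conjugation preserves the matrix inequality and Weyl's eigenvalue inequality then gives $\ka_i(\M_{\z^1_{\bar{\B}}})\geq\ka_i(\M_{\z^n_{\bar{\B}}})$ eigenvalue-wise at $x_*$. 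Combining $\s_1(\M_{\z^1_{\bar{\B}}})=n$ with Newton--Maclaurin applied to $\M_{\z^n_{\bar{\B}}}$, namely $\s_1/n\geq\s_n^{1/n}=1$, forces equality throughout and hence $\ka_1=\cdots=\ka_n=1$ at $x_*$ on both surfaces. An analytic-continuation / strong-maximum-principle argument on the Newton--Maclaurin defect $\s_1-n\geq 0$ for $\M_{\z^n_{\bar{\B}}}$, which now attains an interior zero, then forces $\M_{\z^n_{\bar{\B}}}$ to be the full hyperboloid $g$, contradicting its strictly smaller Gauss image $\td{B}\subsetneq B_1$. The main obstacle throughout is this last step: because the two semitroughs share the blowdown $V_{\bar{\B}}$, their asymptotics agree in all but one direction, so the maximum principle cannot be applied naively at infinity and must be supplemented with the sharp asymptotic control from (1).
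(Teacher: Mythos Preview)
Your treatment of parts (1)--(3) follows the same strategy as the paper: reduce to the standard semitrough via the Lorentz boost \eqref{cb1.2} (noting that $g$ is Lorentz-invariant) and read off the assertions from the known asymptotics \eqref{cb1.1} of $\z^\lambda$. The paper is terser---it simply cites \cite{BS} for the $\lambda=n$ case and computes $\z^1-V_{\bar{\B}_+}$ directly for $\lambda=1$---while you supply a clean maximum-principle proof of $g\geq\z^\lambda_{\bar{\B}}$; both are fine.

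For part (4), however, the paper does something quite different and more elementary. It works with the one-dimensional profile functions $f_1,f_n$ (so $\z^\lambda=\sqrt{f_\lambda^2(x_1)+|\bx|^2}$), observes via Maclaurin that $f_1$ satisfies the Gauss-curvature ODE with $\leq$ in place of $=$, and then invokes a scalar ODE comparison theorem together with the boundary limits $\lim_{t\to-\infty}(f_1-f_n)=l_1>0$ and $\lim_{t\to+\infty}(f_1-f_n)=0$ to conclude $f_1>f_n$ pointwise. This sidesteps all multi-dimensional touching arguments.

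Your route through a touching point $x_*$ and Newton--Maclaurin is attractive, but the final step has a gap: the sentence ``strong-maximum-principle argument on the Newton--Maclaurin defect $\s_1-n\geq0$ for $\M_{\z^n_{\bar{\B}}}$, which now attains an interior zero, then forces $\M_{\z^n_{\bar{\B}}}$ to be the full hyperboloid'' is not justified. The scalar $\s_1-n$ on a constant-$\s_n$ hypersurface is not shown to satisfy any elliptic differential inequality, so Hopf does not apply to it directly. The clean repair is to stay at the PDE level rather than passing to a geometric scalar: Newton--Maclaurin gives $\s_1(\ka[\M_{\z^n_{\bar{\B}}}])\geq n$ everywhere, i.e.\ $\z^n_{\bar{\B}}$ is a \emph{subsolution} of the quasilinear mean-curvature equation $\mathrm{div}\!\bigl(Du/\sqrt{1-|Du|^2}\bigr)=n$; now the vertical translate $\z^1_{\bar{\B}}+t_0$ is a solution touching $\z^n_{\bar{\B}}$ from above at $x_*$, and Hopf's strong maximum principle for this operator forces $\z^1_{\bar{\B}}+t_0\equiv\z^n_{\bar{\B}}$, whence $\z^n_{\bar{\B}}$ satisfies both $\s_1=n$ and $\s_n=1$, is therefore totally umbilic, and must be a hyperboloid---the same contradiction you reach. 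With this correction your argument is valid, though considerably heavier than the paper's one-variable comparison.
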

\begin{proof}
Notice that, since $g(x)$ is invariant under the Lorentzian transform, we only need to look at these assertions for the standard semitrough. For part (1) to part (3), since the constant Gauss curvature case has been proved in \cite{BS}, we only need to consider the standard semitrough of constant mean curvature.

We first prove (1). When $\frac{x}{|x|}\in\bar{\B}_+:=\lt\{\xi \big|\,\, |\xi|=1, \xi_1\geq 0\rt\},$ by Lemma 5.1 of \cite{CT} we have
\begin{align*}
\z^1(x)-V_{\bar{\B}_+}(x)&=\sqrt{f^2(x_1)+|\bar{x}|^2}-|x|\\
&=\frac{f^2(x_1)-x_1^2}{\sqrt{f^2(x_1)+|\bar{x}|^2}+|x|}>0.
\end{align*}
When $\frac{x}{|x|}\notin\bar{\B}_+,$ we have
\[\z^1(x)-V_{\bar{\B}_+}(x)=\frac{f^2(x_1)}{\sqrt{f^2(x_1)+|\bar{x}|^2}+|\bar{x}|}>0.\]
It's easy to see that equations \eqref{cb1.3} and \eqref{cb1.4} follow through directly.

Part (2) can be derived from part (1); part (3) is due to $\z^1$ is spacelike. Thus, we only need to show part (4).
Let $\z^1(x)=\sqrt{f_1^2(x_1)+|\bx|^2},$ then by the proof of Lemma 5.1 in \cite{CT}, we know $f_1$ is the solution of
\[\frac{f_1''}{(1-f_1'^2)^{3/2}}+\frac{(n-1)}{f_1(1-f_1'^2)^{1/2}}=n.\]
Let $\z^n(x)=\sqrt{f_n(x_1)^2+|\bx|^2},$ then by Maclaurin's inequality and Section 2 of \cite{GJS} we get
\[\frac{nf_1''}{f_1^{n-1}(1-f_1'^2)^{n/2+1}}\leq 1=\frac{nf_n''}{f_n^{n-1}(1-f_n'^2)^{n/2+1}}.\]
Moreover, applying Lemma 5.1 of \cite{CT} and Section 2 of \cite{GJS}, we have that
$\lim\limits_{t\goto-\infty}(f_1(t)-f_n(t))=\lt(1-\frac{1}{n}\rt)>0,$
and $\lim\limits_{t\goto\infty}(f_1(t)-f_n(t))=0.$
By the Comparison Theorem we conclude that $f_1(t)>f_n(t)$ for all $t.$
This completes the proof of part (4).
\end{proof}

\subsection{Construction of the lower barrier.}
\label{construction lb}
Let's recall the following Lemma from \cite{BS}:
\begin{lemm}
\label{cblem2}(Lemma 4.5 in \cite{BS})
Let $\F$ be the closure of some open nonempty subset of the ideal boundary $\dS^{n-1}$ with $\T \F\in C^{1, 1}.$
There exists $\delta_0>0$ such that the following holds:

(1) $\F$ and $\bar{\F}^c$ are the union of closed balls of $\dS^{n-1}$ of radius $\delta_0.$

(2) For every $x\in\bar{\F}^c,$ there exists a closed ball $\B$ with radius bounded below by $\delta_0$ which contains $x$
and is contained in $\bar{\F}^c$ such that $d_S(x, \B^c)=d_S(x, \F).$
\end{lemm}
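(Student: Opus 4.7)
The plan is to exploit the well-known equivalence between $C^{1,1}$ regularity of $\p\F$ on the compact manifold $\dS^{n-1}$ and a uniform two-sided ball condition. Concretely, the first step is to produce a single constant $\delta_0>0$ such that for every $y\in\p\F$ there exist closed geodesic balls $\B^{\mathrm{in}}_y\subset\F$ and $\B^{\mathrm{out}}_y\subset\bar{\F}^c$, each of radius $\delta_0$ and tangent to $\p\F$ at $y$. I would obtain $\delta_0$ by covering $\p\F$ with finitely many coordinate charts in which $\p\F$ appears as a $C^{1,1}$ graph over a tangent hyperplane; the uniform $L^\infty$ bound on the second weak derivatives of the defining functions, combined with compactness of $\p\F$, then yields a single $\delta_0$ that works everywhere.

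Granted this two-sided ball condition, part (1) follows quickly. Given $x$ in the interior of $\F$, I would split into cases. If $d_S(x,\p\F)\geq\delta_0$, then the closed geodesic ball of radius $\delta_0$ centered at $x$ already lies in $\F$. Otherwise, letting $y\in\p\F$ realize the distance to $x$, a standard first-variation argument shows that $x$ lies on the inward geodesic normal to $\p\F$ at $y$ at arc-length $d_S(x,y)<\delta_0$; hence $x$ is contained in the interior tangent ball $\B^{\mathrm{in}}_y$. A boundary point $y\in\p\F$ is trivially contained in the closed ball $\B^{\mathrm{in}}_y$. The same argument with the exterior family $\{\B^{\mathrm{out}}_y\}_{y\in\p\F}$ exhibits $\bar{\F}^c$ as a union of closed balls of radius $\delta_0$.

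For part (2), fix $x\in\bar{\F}^c$ and set $d:=d_S(x,\F)$. If $d\leq\delta_0$, pick $y\in\p\F$ realizing the distance and take $\B:=\B^{\mathrm{out}}_y$. By first variation $x$ sits on the outward geodesic normal to $\p\F$ at $y$, which is precisely the geodesic from $y$ through the center $c$ of $\B^{\mathrm{out}}_y$; thus $d_S(x,c)=\delta_0-d$, and consequently $d_S(x,\B^c)=\delta_0-d_S(x,c)=d=d_S(x,\F)$, while by construction $x\in\B\subset\bar{\F}^c$ and the radius $\delta_0$ is bounded below by $\delta_0$. If instead $d>\delta_0$, take $\B$ to be the closed geodesic ball of radius $d$ centered at $x$; it contains $x$, lies in $\bar{\F}^c$ since its interior does not meet $\F$, has radius exceeding $\delta_0$, and trivially satisfies $d_S(x,\B^c)=d=d_S(x,\F)$.

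The main obstacle is the first step: extracting the uniform two-sided ball constant $\delta_0$ from the bare $C^{1,1}$ regularity of $\p\F$. This is a classical fact but it requires a careful compactness argument carried out on finitely many $C^{1,1}$ graph representations of $\p\F$, and one must check that the inward and outward geodesic normals so produced are continuous and well-defined. Once $\delta_0$ is in hand, the remainder of the proof reduces to elementary geodesic computations on $\dS^{n-1}$ together with the standard fact that a length-minimizing geodesic from a point to a closed set meets the set orthogonally to its boundary.
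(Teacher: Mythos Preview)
The paper does not supply its own proof of this lemma: it is stated verbatim as Lemma~4.5 of \cite{BS} and simply quoted, so there is no argument in the paper against which to compare your proposal. Your sketch is the standard way to prove this fact---extract a uniform two-sided geodesic ball condition from the $C^{1,1}$ regularity of $\partial\F$ via compactness, and then run the elementary nearest-point and normal-geodesic arguments---and it is correct in outline. One small point worth tightening in the case $d>\delta_0$ of part~(2): you should note explicitly that no point of the \emph{interior} of $\F$ can lie on $\partial\B$, since otherwise a short step along the minimizing geodesic toward that point would give a point of $\F$ strictly closer to $x$ than $d$; this is what guarantees the closed ball $\B$ of radius $d$ centered at $x$ is contained in $\overline{\F^c}$ and not merely its interior.
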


Now, for a given $\F,$ we fix $\delta_0>0$ as in Lemma \ref{cblem2}.
By letting
\be\label{def-lun}
\lu^{n}(x)=\sup\limits_{\bar{\B}\subset \F, \delta(\bar{\B})\geq\delta_0}\z^n_{\bar{\B}}(x),\ee
and
\be\label{def-uun}
\uu^{n}(x)=\inf\limits_{\bar{\B}\supset\F, \delta(\bar{\B})\leq\pi-\delta_0}\z^n_{\bar{\B}}(x),
\ee
where $\z^n_{\bar{\B}}(x)$ satisfies $\s_n(\ka[\z^n_{\bar{\B}}])=1.$
Bayard and Sch\"urer (see Theorem 1.2 in \cite{BS}) proved
\begin{lemm}
\label{cblem0}
Let $\F$ be the closure of some open nonempty subset of the ideal boundary $\dS^{n-1}$ with $\T \F\in C^{1, 1}.$
Then, there exists a unique, smooth, strictly convex, spacelike function $u:\R^n\goto\R$
with $\lu^n\leq u\leq\uu^n,$ such that its graph $\M_u$ satisfies
\[\s_n(\ka[\M_u])=1.\]
Moreover, $|u(x)-V_{\F}(x)|\goto 0$ as $|x|\goto\infty.$
\end{lemm}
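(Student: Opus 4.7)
The plan is to construct $u$ by a Perron-type exhaustion: solve Dirichlet problems on balls of increasing radius using $\lu^n$ and $\uu^n$ as barriers, then pass to the limit via interior a priori estimates.

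First, I would verify the basic properties of the barriers: both $\lu^n$ and $\uu^n$ are well defined, convex, locally Lipschitz, and strictly spacelike on every compact set (from Lemma \ref{cblem1}(2)--(3) and a standard envelope argument), and $\lu^n \le \uu^n$ on $\R^n$. The inequality $\lu^n \le \uu^n$ follows from comparing any two members of the defining families: for $\bar{\B}_1 \subset \F \subset \bar{\B}_2$, the Gauss map image of $\M_{\z^n_{\bar{\B}_1}}$ is contained in that of $\M_{\z^n_{\bar{\B}_2}}$, and both graphs satisfy $\s_n(\ka) = 1$, so the spacelike Monge--Amp\`ere comparison principle gives $\z^n_{\bar{\B}_1} \le \z^n_{\bar{\B}_2}$ pointwise.

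Next, for each $R > 0$ I would solve
\begin{equation*}
\s_n(\ka[\M_{u_R}]) = 1 \text{ in } B_R, \qquad u_R = \vp_R \text{ on } \p B_R,
\end{equation*}
with smooth boundary data satisfying $\lu^n|_{\p B_R} \le \vp_R \le \uu^n|_{\p B_R}$; existence of a smooth strictly convex spacelike solution follows from Delano\"e's theory of the spacelike Monge--Amp\`ere equation on bounded convex domains, and the comparison principle then yields $\lu^n \le u_R \le \uu^n$ on $B_R$. Interior estimates proceed in three layers: the $C^0$ bound is immediate from the barriers; the local spacelike bound $|Du_R| \le 1 - \varepsilon(K)$ on each compact $K \subset \R^n$ comes from the convexity of $u_R$ combined with the spacelike property of $\lu^n$ and $\uu^n$; and the interior $C^2$ bound follows from Pogorelov's classical auxiliary function for Monge--Amp\`ere equations (which in this setting reads $\det D^2 u = (1-|Du|^2)^{(n+2)/2}$). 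A diagonal subsequence then yields a smooth, strictly convex, spacelike entire solution $u$ with $\lu^n \le u \le \uu^n$ and $\s_n(\ka[\M_u]) = 1$.

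It remains to show $|u(x) - V_\F(x)| \to 0$ as $|x| \to \infty$, which by the sandwich reduces to proving $\uu^n - V_\F \to 0$ and $V_\F - \lu^n \to 0$. For a unit direction $\theta$, I would localise at the boundary point $\xi_0 \in \p\F$ (respectively $\xi_0 \in \p\bar{\F}^c$) that maximises $\xi \cdot \theta$; Lemma \ref{cblem2}, which uses $\p\F \in C^{1,1}$, provides a closed ball $\bar{\B}$ of radius at least $\delta_0$ supporting $\F$ at $\xi_0$ from outside (resp.\ from inside), and Lemma \ref{cblem1}(1), \emph{crucially exploiting} that the constant $l_n$ equals $0$, together with the elementary monotonicity $V_{\bar{\B}} \ge V_\F$ (resp.\ $V_{\bar{\B}} \le V_\F$) and the fact that $V_{\bar{\B}} \to V_\F$ as $\bar{\B}$ tightens around $\F$ near $\xi_0$, yields the claimed asymptotic matching. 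Uniqueness follows from the standard comparison principle for the concave operator $\s_n^{1/n}$ on the positive cone, since the difference of two such solutions vanishes at infinity and hence must vanish everywhere. I expect the main obstacle to be making the asymptotic matching \emph{uniform} in the direction $\theta$, as this is precisely the point where the $C^{1,1}$ regularity of $\p\F$ is essential and where the jump term $l_n = 0$ must be used decisively to exclude the exceptional direction appearing in Lemma \ref{cblem1}(1).
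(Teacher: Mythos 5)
A preliminary remark: the paper offers no proof of this lemma at all --- it is quoted as Theorem~1.2 of \cite{BS}, with $\lu^{n}$ and $\uu^{n}$ precisely the barriers used there --- so your exhaustion scheme (Dirichlet problems on balls via Delano\"e's solvability, barrier sandwich by comparison, interior Pogorelov estimates, diagonal limit, asymptotics from the semitrough behavior with $l_n=0$, uniqueness by comparison at infinity) is not an alternative to an argument in the paper but essentially a reconstruction of Bayard--Schn\"urer's original proof, and in outline it is viable. (Minor caveat: the ordering $\z^n_{\bar{\B}_1}\leq \z^n_{\bar{\B}_2}$ for $\bar{\B}_1\subset\bar{\B}_2$ is not a consequence of Gauss-map containment per se; one compares the blowdowns $V_{\bar{\B}_1}\leq V_{\bar{\B}_2}$, uses that each semitrough approaches its blowdown at infinity, and applies the maximum principle on large balls.)

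The one step that does not stand as written is the local gradient estimate. Convexity of $u_R$ together with mere spacelikeness of $\lu^{n}$ and $\uu^{n}$ does not yield $|Du_R|\leq 1-\varepsilon(K)$: both barriers have $|D\cdot|\goto 1$ along directions of $\F$, so the sandwich alone is compatible with $|Du_R|$ approaching $1$ on a fixed compact set. What actually makes the estimate work is quantitative information relating the barriers to their common blowdown: $\lu^{n}-V_{\F}\geq\delta(K)>0$ on compact sets (Lemma \ref{cblem1}(2)) and $\uu^{n}-V_{\F}\goto 0$ at infinity (the $l_n=0$ asymptotics combined with the exterior-ball property in Lemma \ref{cblem2}). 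Granting these, convexity gives, for $x\in K$ and $e=Du_R(x)/|Du_R(x)|$, the bound $|Du_R(x)|\leq\frac{\uu^{n}(x+re)-\lu^{n}(x)}{r}\leq 1+\frac{V_{\F}(x)-\lu^{n}(x)+o(1)}{r}\leq 1-\frac{\delta}{2r}$ for a fixed large $r$ (and $R$ large enough that $x+re\in B_R$); equivalently, one invokes the cutoff mechanism of Lemma \ref{lc1lem1} with a rescaling of $\lu^{n}$ as the cutoff $\psi$, which is exactly how \cite{BS} argue and exactly the point this paper singles out as the crux (and has to redo for $\sigma_k$, $k<n$, via Lemma \ref{lc1lem3}, since there $\uu-\lu\nrightarrow 0$). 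This also creates an ordering problem in your write-up: the asymptotic matching $\uu^{n}-V_{\F}\goto 0$, which you defer to the final paragraph, must be established before the compactness argument, since the $C^1$ estimate (hence the limit) depends on it; there is no circularity, because it concerns the barriers only, but it has to be moved up front. The remaining points --- smoothing the boundary data so Delano\"e's theorem applies with $\lu^{n}$ as subsolution, and making the directional asymptotics uniform on $\dS^{n-1}$ using that the Lorentz parameters of the supporting semitroughs stay in a compact subset of $(-1,1)$ because $\delta_0\leq\delta(\bar{\B})\leq\pi-\delta_0$ --- are technical but standard.
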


We will use this $u$ as our lower barrier, and from now on we will denote it by $\lu.$

\subsection{Construction of the upper barrier}
\label{construction ub}
Next, we will construct the upper barrier. Let
\[\lu_1(x)=\sup\limits_{\bar{\B}\subset \F, \delta(\bar{\B})\geq\delta_0} \z^1_{\bar{\B}}(x)
\,\,\text{and}\,\,\uu_1(x)=\inf\limits_{\F\subset\bar{\B}, \delta(\bar{\B})\leq\pi-\delta_0}\z^1_{\bar{\B}}(x),\]
where $\z^1_{\bar{\B}}(x)$ are semitroughs satisfying $\s_1(\ka[\z^1_{\bar{\B}}])=n.$ Then $\lu_1(x)$ and $\uu_1(x)$
are weak sub and super solutions to the prescribed mean curvature equation
\be
\label{cu1.1}
\left\{
\begin{aligned}
&\textup{div}\lt(\frac{Du}{\sqrt{1-|Du|^2}}\rt)=n\\
&|Du(x)|<1\,\,\text{for all}\,\,x\in\R^n.
\end{aligned}
\right.
\ee

By Theorem 6.1 of \cite{CT} we know that there exists a smooth solution $h(x)$ of \eqref{cu1.1} satisfies
\[\lu_1(x)\leq h(x)\leq\uu_1(x)\,\,\text{for all}\,\, x\in\R^n\]
and $\s_1(\ka[h(x)])=n.$

We will use this $h(x)$ as our upper barrier. We note that, by Theorem 3.1 of \cite{CT} and our assumptions on $\F,$ we have $h(x)$ is strictly convex. Moreover, applying Lemma 4.4 of \cite{BS} we get,
as $|x|\goto\infty,$ $\lu(x)-V_\F(x)\goto 0.$ Thus, Lemma \ref{cblem1} yields as $|x|\goto\infty,$ $h(x)\geq \lu(x).$ By the Comparison Theorem we know that
$h(x)>\lu(x)$ for all $x\in\R^n.$

\subsection{Legendre transform of barrier functions}
\label{lt for bf}
We will denote the Legendre transform of $\lu$ and $h$ by $\lus$ and $h^*$ respectively. In this subsection, we will discuss some basic properties of $\lus$ and $h^*$ that will be used later.
\begin{lemm}
\label{lem-boundary-lus}
Let $\lu$ be the lower barrier function constructed in Subsection \ref{construction lb}, and let $\lus$ denote its Legendre transform.
Then we have
\[\lus=0\,\,\,\mbox{on $\p\td{F}.$}\]
\end{lemm}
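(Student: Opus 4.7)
The natural formulation uses the dual definition of the Legendre transform,
\[
\lus(\xi) = \sup_{x\in\R^n}\bigl[\,x\cdot\xi - \lu(x)\,\bigr],
\]
which is well-defined and continuous on $\overline{\td F}$ and agrees with the definition via $\xi = D\lu(x)$ in the interior. The problem thus reduces to evaluating this supremum at $\xi \in \p\td F$, and the plan is to prove matching upper and lower bounds $\lus(\xi)\leq 0$ and $\lus(\xi)\geq 0$.

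For the upper bound on $\overline{\td F}$, I would first show $\lu \geq V_\F$ pointwise on $\R^n$. By Lemma \ref{cblem2}(1), $\F$ is a union of closed balls $\bar{\B}\subset\F$ with $\delta(\bar{\B})\geq\delta_0$, so that
\[
V_\F(x) = \sup_{\bar{\B}\subset\F,\,\delta(\bar{\B})\geq\delta_0} V_{\bar{\B}}(x).
\]
Since by Lemma \ref{cblem1}(1) each semitrough satisfies $\z^n_{\bar{\B}}>V_{\bar{\B}}$, the definition \eqref{def-lun} of $\lu^n$ gives $\lu^n\geq V_\F$, and the construction in Subsection \ref{construction lb} provides $\lu\geq\lu^n$. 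Because the support functions of $\F$ and of its convex hull coincide, $V_\F=V_{\td F}$. For any $\xi\in\overline{\td F}$ the definition of the support function then yields $x\cdot\xi\leq V_{\td F}(x)\leq\lu(x)$, hence $\lus(\xi)\leq 0$.

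For the matching lower bound on $\p\td F$, fix $\xi\in\p\td F$. By convexity of $\td F$, there exists a supporting unit direction $\theta\in\dS^{n-1}$ with $\xi\cdot\theta = V_{\td F}(\theta) = V_\F(\theta)$. Evaluating the sup defining $\lus(\xi)$ along the ray $x = r\theta$ and using positive homogeneity $V_\F(r\theta)=rV_\F(\theta)=r\xi\cdot\theta$, together with Lemma \ref{cblem0} which gives $\lu(r\theta)-V_\F(r\theta)\goto 0$ as $r\goto\infty$, I obtain
\[
r\theta\cdot\xi - \lu(r\theta) \;=\; -\bigl[\lu(r\theta) - V_\F(r\theta)\bigr]\;\goto\;0.
\]
Since $\lus(\xi)$ dominates this quantity for every $r$, $\lus(\xi)\geq 0$. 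Combining the two bounds completes the proof.

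The only delicate point is the lower bound: I need the asymptotic $\lu(x)\to V_\F(x)$ to hold precisely in the supporting directions of each boundary point of $\td F$ (including boundary points where the supporting hyperplane is tangent to $\dS^{n-1}$). Lemma \ref{cblem0} provides exactly this uniform-in-direction statement, so no additional asymptotic analysis of $\lu$ is required beyond what Bayard--Schn\"urer already supply.
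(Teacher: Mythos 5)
Your proposal is correct and follows essentially the same route as the paper: the upper bound $\lus(\xi)\leq 0$ via $x\cdot\xi\leq V_\F(x)\leq\lu(x)$ (with $\lu\geq V_\F$ coming from the semitrough construction), and the lower bound by evaluating the supremum along a ray $x=r\theta$ in a supporting direction of $\td F$ at $\xi$ and invoking $\lu-V_\F\goto 0$ at infinity from Lemma \ref{cblem0}. The paper's $\xi_0$ with $V_\F(\xi_0)=\xi_0\cdot\xi$ is exactly your supporting direction $\theta$, so no substantive difference.
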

\begin{proof}
For any $\xi\in\p\td{F},$ by the definition of Legendre transform we have
\[\lus(\xi)=\sup\limits_{x\in\R^n}\{x\cdot\xi-\lu(x)\}.\]
It's clear that
\[x\cdot\xi-\lu(x)\leq V_{\F}(x)-\lu(x).\]
Therefore by Lemma \ref{cblem1} we know that $\lus(\xi)\leq 0.$ On the other hand, there exists $\xi_0\in\dS^{n-1}$ such that
$V_\F(\xi_0)=\xi_0\cdot \xi.$ Then we get,
\[\sup\limits_{x\in\R^n}\{x\cdot\xi-\lu(x)\}\geq V_\F(r\xi_0)-\lu(r\xi_0),\,\,\mbox{for any $r>0$.}\]
Let $r\goto \infty$ we conclude $\lus(\xi)\geq 0.$ This completes the proof of the Lemma.
\end{proof}

In the next Lemma, we will compare the Legendre transform of $\lu$ and $h.$
\begin{lemm}
\label{cblem3}
Let $\lu$ be the lower barrier function constructed in Subsection \ref{construction lb}, $h$ be the upper barrier function constructed in Subsection
 \ref{construction ub}. Let $\lus$ and $h^*$ denote the Legendre transform of $\lu$ and $h$ respectively. Then we have,
$h^*(\xi)\leq\lu^*(\xi)$ for all $\xi\in\td{F}.$
\end{lemm}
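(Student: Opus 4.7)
The approach is to exploit the order-reversing nature of the Legendre transform. At the end of Subsection \ref{construction ub} we already established the pointwise bound $h(x) > \lu(x)$ for every $x \in \R^n$; taking Legendre transforms should simply flip the inequality and give exactly what we want.

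Concretely, for any fixed $\xi \in \td{F}$ I would write
\[
h^*(\xi) = \sup_{x \in \R^n}\{x \cdot \xi - h(x)\} \leq \sup_{x \in \R^n}\{x \cdot \xi - \lu(x)\} = \lus(\xi),
\]
where the middle inequality is just the termwise bound $-h(x) < -\lu(x)$ preserved under the supremum. That is the entire substantive content of the lemma.

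The only point that needs a short verification is that both suprema are genuinely finite on $\td{F}$, so that the inequality is a comparison of real numbers rather than a tautology $+\infty \leq +\infty$. For $\lus$ this is implicit in Lemma \ref{lem-boundary-lus}: combining Lemma \ref{gmlem3} (which identifies $D\lu(\R^n)$ with $\conv(L_{\lu})$) with the asymptotic $\lu - V_\F \to 0$ from Lemma \ref{cblem0} and the correspondence in Lemma \ref{gmlem2} gives $L_{\lu} = \F$, so $\lus$ is finite on $\td{F}$. For $h^*$, I would use the semitrough sandwich $\lu_1 \leq h \leq \uu_1$ of Subsection \ref{construction ub} together with part~(1) of Lemma \ref{cblem1} to conclude that the blowdown of $h$ is also $V_\F$, hence $Dh(\R^n) = \td{F}$ and $h^*$ is finite on $\td{F}$ as well. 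There is no serious obstacle here: once the pointwise ordering $h > \lu$ from the previous subsection is in hand, the lemma reduces to a one-line application of monotonicity of the Legendre transform together with a routine domain check already essentially contained in the earlier lemmas.
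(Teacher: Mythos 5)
Your proof is correct and is in substance the same as the paper's: both arguments rest on the pointwise bound $h>\lu$ established at the end of Subsection \ref{construction ub} together with the variational characterization of the Legendre transform. The paper merely writes this via the gradient parametrization $Dh(x)=\xi=D\lu(y)$ and uses strict convexity of $\lu$ to compare $y\cdot\xi-\lu(y)$ with $x\cdot\xi-\lu(x)$, which is exactly your supremum monotonicity evaluated at the maximizer defining $h^*(\xi)$.
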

\begin{proof}
For any $\xi\in\td{F},$ there exist $x, y\in\R^n$ such that
\[Dh(x)=\xi=D\lu(y),\]
Therefore,
\begin{align*}
h^*(\xi)-\lu^*(\xi)&=x\cdot \xi-h(x)-y\cdot \xi+\lu(y)\\
&<(x-y)\cdot \xi+\lu(y)-\lu(x)<0,
\end{align*}
where the last inequality comes from $\lu(x)$ is strictly convex.
\end{proof}

\bigskip
\section{Construction of the convergence sequence}
\label{cs}
Let's consider the following Dirichlet Problem
\be\label{cs1.1}
\left\{
\begin{aligned}
F(\w\gas_{ik}\us_{kl}\gas_{lj})&=\frac{1}{\binom{n}{k}^{\frac{1}{k}}}\,\,\text{in $\td{F}$}\\
\us&=0\,\,\text{on $\partial\td{F},$}
\end{aligned}
\right.
\ee
where $\w=\sqrt{1-|\xi|^2,}$ $\gas_{ij}=\delta_{ij}-\frac{\xi_i\xi_j}{1+\w},$
$F(\w\gas_{ik}\us_{kl}\gas_{lj})=\lt(\frac{\s_n}{\s_{n-k}}(\ka*[\w\gas_{ik}\us_{kl}\gas_{lj}])\rt)^{1/k},$
and $\td{F}$ is the convex hull of $\F$ in $B_1.$ Note that, by Subsection \ref{lt},
Lemma \ref{lem-boundary-lus}, Lemma \ref{cblem3}, and Maclaurin's inequality
it's easy to see that, $\lus$ is a supersolution of \eqref{cs1.1} and $h^*$ is a subsolution of \eqref{cs1.1}. Therefore, if $\us$ is a solution
of \eqref{cs1.1}, then the Legendre transform of $\us,$ denoted by $u,$ satisfies:
\[u\,\, \mbox{is defined on $D\us(\td{F})\supset D\lus(\td{F})=\R^n,$}\]
and
\[\sigma_k(\ka[\M_u])=\binom{n}{k}.\]
Moreover, we have the following Lemma.
\begin{lemm}
\label{lem-legendre-boundary}
Let $\F\subset\dS^{n-1},$ $\td{F}=\conv{\F},$ and $\us$ be a solution of
\be\left\{
\begin{aligned}
F(\w\gas_{ik}\us_{kl}\gas_{lj})&=\frac{1}{\binom{n}{k}^{\frac{1}{k}}}\,\,\text{in $\td{F}$}\\
\us&=\vp\,\,\text{on $\partial\td{F}.$}
\end{aligned}
\right.
\ee
Then, the Legendre transform of $\us$ denoted by $u$ satisfies, when $\frac{x}{|x|}\in\F$
\be\label{cv0.1}
u(x)-|x|\goto-\vp\lt(\frac{x}{|x|}\rt)\,\,\mbox{as $|x|\goto\infty,$ uniformly}.
\ee
\end{lemm}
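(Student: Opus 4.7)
The plan is to exploit the duality definition of the Legendre transform,
\[
u(x)=\sup_{\xi\in\overline{\td{F}}}\bigl\{x\cdot\xi-\us(\xi)\bigr\},
\]
and, for $x=r\theta$ with $\theta\in\F$, to show that the maximizer $\xi_r$ collapses onto $\theta$ fast enough that $r(\theta\cdot\xi_r-1)\goto 0$, while $\us(\xi_r)\goto\vp(\theta)$ by continuity up to $\p\td{F}$. The easy half is the lower bound: since $\theta\in\F\subset\p\td{F}$ and $\us(\theta)=\vp(\theta)$, plugging $\xi=\theta$ into the sup gives
\[
u(r\theta)\geq r\theta\cdot\theta-\us(\theta)=r-\vp(\theta)
\]
for every $r>0$.

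For the matching upper bound, I would pick $\xi_r\in\overline{\td{F}}$ achieving the supremum (existence by compactness of $\overline{\td{F}}$ and continuity of $\us$). Because $\td{F}\subset\overline{B}_1$ and $|\theta|=1$, the hyperplane $\{\xi:\theta\cdot\xi=1\}$ supports $\overline{B}_1$ at the single point $\theta$, and a fortiori supports $\overline{\td{F}}$ there. Combining the identity $u(r\theta)=r\theta\cdot\xi_r-\us(\xi_r)$ with the lower bound yields
\[
r(1-\theta\cdot\xi_r)\leq \us(\xi_r)-\vp(\theta)\leq 2\sup_{\overline{\td{F}}}|\us|.
\]
Since $|\xi_r|\leq 1$ implies the elementary bound $|\xi_r-\theta|^2\leq 2(1-\theta\cdot\xi_r)$, I obtain $\xi_r\goto\theta$ at rate $O(r^{-1/2})$, and by continuity of $\us$ up to the boundary, $\us(\xi_r)\goto\vp(\theta)$.

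Putting the pieces together and writing
\[
u(r\theta)-r=r(\theta\cdot\xi_r-1)-\us(\xi_r),
\]
the first term on the right sits in $[-2\sup|\us|,\,0]$ and, by the lower bound just established, is at least $\us(\xi_r)-\vp(\theta)\goto 0$. Hence $r(\theta\cdot\xi_r-1)\goto 0$, and consequently $u(r\theta)-r\goto-\vp(\theta)$. Uniformity in $\theta\in\F$ is automatic because every constant in the argument depends only on $\sup_{\overline{\td{F}}}|\us|$ and the modulus of continuity of $\us$ on the compact set $\overline{\td{F}}$, neither of which sees the specific point $\theta$. The only genuine point requiring care is a regularity check, namely that $\us$ extends continuously to $\p\td{F}$ with boundary value $\vp$; this is part of what it means to be a solution of the Dirichlet problem, and will be guaranteed in the approximating scheme of Section \ref{cs} by the barriers $h^*$ and $\lus$ squeezing $\us$ against the boundary.
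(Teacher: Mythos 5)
Your proof is correct and rests on the same two pillars as the paper's argument --- the duality formula $u(r\theta)=\sup_{\xi}\{r\theta\cdot\xi-\us(\xi)\}$, the lower bound obtained by plugging in $\xi=\theta$ with $\us(\theta)=\vp(\theta)$, and the observation that a maximizer must collapse onto $\theta$ --- but it is organized differently. The paper argues by contradiction: it takes sequences $r_i\goto\infty$, $\theta_i\in\F$ along which $|u(r_i\theta_i)-r_i+\vp(\theta_i)|>\e_0$, extracts near-maximizers $\xi_i\in\td{F}$, and excludes both $\xi_i\goto\xi_0\neq\theta_0$ (the left side of the resulting inequality tends to $-\infty$) and $\xi_i\goto\theta_0$ (the left side is nonpositive while the right side tends to $\e_0$), the varying $\theta_i$ taking care of uniformity. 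You instead run the argument directly and quantitatively: from $r(1-\theta\cdot\xi_r)\leq 2\sup_{\overline{\td{F}}}|\us|$ you get $|\xi_r-\theta|=O(r^{-1/2})$, and the modulus of continuity $\omega$ of $\us$ on $\overline{\td{F}}$ then gives $|u(r\theta)-r+\vp(\theta)|\leq 2\,\omega\lt(Cr^{-1/2}\rt)$, i.e.\ uniform convergence with an explicit rate, which is slightly more than the paper states. Both arguments need the same regularity input, namely that $\us$ extends continuously to $\p\td{F}$ with boundary value $\vp$, which you correctly flag. One small slip: in your first displayed chain the middle term should be $\vp(\theta)-\us(\xi_r)$, not $\us(\xi_r)-\vp(\theta)$, since the lower bound gives $r(1-\theta\cdot\xi_r)\leq\vp(\theta)-\us(\xi_r)$; the bound by $2\sup_{\overline{\td{F}}}|\us|$ and everything downstream (where you use the correct inequality $r(\theta\cdot\xi_r-1)\geq\us(\xi_r)-\vp(\theta)$) is unaffected.
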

\begin{proof}
We will show
\be\label{cv0.2}
\lim\limits_{r\goto\infty}(u(r\theta)-r)=-\vp(\theta),
\ee
and the convergence is uniform in $\F.$
If \eqref{cv0.2} is not true, then there would exist two sequences $\{r_i\}_{i=1}^\infty, \{\theta_i\in\F\}_{i=1}^{\infty},$ where $r_i\goto \infty$ as $i\goto\infty,$
and a fixed $\e_0>0,$ such that for any $i\in\mathbb{N}$ we have
\[|u(r_i\theta_i)-r_i+\vp(\theta_i)|>\e_0.\]
Since
\begin{align*}
&u(r_i\theta_i)-r_i+\vp(\theta_i)\\
&=\sup\limits_{\xi\in\td{F}}\{r_i\theta_i\cdot\xi-\us(\xi)\}-r_i+\vp(\theta_i)\\
&\geq r_i-\vp(\theta_i)-r_i+\vp(\theta_i),
\end{align*}
we get
\[u(r_i\theta_i)-r_i+\vp(\theta_i)>\e_0.\]
Therefore, for each $i\in\mathbb{N},$ there exists $\xi_i\in\td{F}$ such that
\be\label{cv0.3}
r_i\theta_i\cdot\xi_i-r_i>\e_0+\us(\xi_i)-\vp(\theta_i).
\ee
Without loss of generality, we assume $\{\theta_i\}_{i=1}^n$ converges to some $\theta_0\in\F$.
If there exists a convergent subsequence of $\{\xi_i\}_{i=1}^\infty,$ which we denote by $\{\xi_{l_j}\}_{j=1}^\infty,$
such that $\xi_{l_j}\goto\xi_0\neq\theta_0.$ Then as $j\goto\infty$ we can see that the l.h.s of \eqref{cv0.3}
goes to $-\infty,$ while the r.h.s is bounded from below. This leads to a contradiction.
Thus, we have $\lim\limits_{i\goto\infty}\xi_i=\theta_0.$ However, in this case we get as $i\goto\infty$
the l.h.s of \eqref{cv0.3} is nonpositive, while the r.h.s $\goto \e_0,$ which is a contradiction.
Therefore, Lemma \ref{lem-legendre-boundary} is proved.
\end{proof}

 By Lemma \ref{lem-legendre-boundary} we obtain, if $\us$ is a solution of \eqref{cs1.1}, then its Legendre transform $u$ also satisfies, when $\frac{x}{|x|}\in\F,$
\[u(x)\goto |x|\,\,\mbox{as $|x|\goto\infty.$}\]
From the discussion above we can see that, in order to construct an entire, strictly convex, spacelike constant $\s_k$ curvature hypersurface with prescribed lightlike directions, we only need
to show equation \eqref{cs1.1} is solvable. Unfortunately, equation \eqref{cs1.1} is a degenerate equation. Therefore, we will consider the solvability
of the following approximating problem instead.

\be\label{cs1.2}
\left\{
\begin{aligned}
F(\w\gas_{ik}\ujs_{kl}\gas_{lj})&=\frac{1}{\binom{n}{k}^{\frac{1}{k}}}\,\,\text{in $\td{F}_J$}\\
\ujs&=\vjs\,\,\text{on $\partial\td{F}_J,$}
\end{aligned}
\right.
\ee
where $\vjs=\lus|_{\partial\td{F}_J},$ and $\{\td{F}_J\}_{J\in\mathbb{N}}$ is a sequence of strictly convex set satisfying $\td{F}_J\subset\td{F}_{J+1}\subset\td{F}$ and $\p\td{F}_J$ is smooth.
In the following, we will show the existence of the solutions to equation \eqref{cs1.2}.

\subsection{$C^0$ estimates.}
\label{c0b} Since $\ujs$ is a convex function we get,
\[\sup\limits_{\td{F}_J}\ujs\leq\max\limits_{\partial\td{F}_J}\vjs.\]
Moreover, since $h^*$ is a subsolution of \eqref{cs1.1} and $h^*\leq\lus,$ by the maximum principle we conclude,
\[\ujs>\hs\,\,\mbox{in $\td{F}_J$}.\]

\subsection{$C^1$ estimates.}
\label{c1b} By Section 2 of \cite{CNS}, we know that we can always construct a subsolution
$\lujs$ such that
\be\label{c1e1.1}
\left\{
\begin{aligned}
F(\w\gas_{ik}\lujs_{kl}\gas_{lj})&\geq\frac{1}{\binom{n}{k}^{\frac{1}{k}}}\,\,\text{in $\td{F}_J$}\\
\lujs&=\vjs\,\,\text{on $\partial\td{F}_J.$}
\end{aligned}
\right.
\ee
Then, by the convexity of $\ujs$ we obtain
\[|D\ujs|\leq\max\limits_{\partial\td{F}_J}|D\lujs|.\]

\subsection{$C^2$ boundary estimates.}
\label{c2b} For our convenience, in this subsection we will use the hyperbolic model. Following the discussion in Subsection \ref{gg} we can
write equation \eqref{cs1.2} as follows:
\be\label{es1.2}
\left\{
\begin{aligned}
F(v_{ij}-v\delta_{ij})&=\frac{1}{\binom{n}{k}^{\frac{1}{k}}},\,\,\mbox{in $U_J$}\\
v&=\frac{\vjs(\xi)}{\sqrt{1-|\xi|^2}},\,\,\mbox{on $\partial U_J,$}
\end{aligned}
\right.
\ee
where $v_{ij}=\bn_j\bn_iv$ denote the covariant derivative with respect to the hyperbolic metric and
$U_J=P^{-1}(\td{F}_J)\subset\mathbb{H}^n(-1).$ Here we want to point out that $v=\frac{\ujs(\xi)}{\sqrt{1-|\xi|^2}}.$

Equation of this type has been studied by Bo Guan in \cite{Guan}. However, our function $F$ is slightly different from functions in \cite{Guan}.
More precisely, our function $F$ doesn't satisfy the assumption (1.7) in \cite{Guan}. Therefore, in order to obtain the
$C^2$ boundary estimates, we need to give
a different proof of Lemma 6.2 in \cite{Guan}, i.e., we need to construct a barrier function $\td{\ba}$ satisfying
\[\mathfrak{L}\td{\ba}:=F^{ij}\bar\nabla_{ij}\td{\ba}-\td{\ba}\sum F^{ii}\leq -c(1+\sum F^{ii})\,\,\mbox{in $U_J$},\]
and
\[\td{\ba}\geq 0\,\,\mbox{on $\partial U_J,$}\]
where $U_J=P^{-1}(\td{F}_J)\subset\mathbb{H}^n(-1),$ $\bar\nabla$ is the covariant derivative with respect to the hyperbolic metric, and $c>0.$
Note that, in \cite{RWX}, we have constructed such $\td{\ba}$ for the special case when $U_J=P^{-1}(B_{r_{J}}),$
where $B_{r_J}$ is a ball of radius $r_J<1.$ Here, the main difficulty is that $\p U_J$ doesn't lie on a plane
$\mathbb{P}:=\{x_{n+1}=c\}$ anymore. Hence, we can no longer construct $\td{\ba}$ using $c-x_{n+1}.$
In order to conquer this difficulty, we prove the following equality.
\begin{lemm}
\label{c2blem1.1}
For any function $u\in C^2(\Omega), \Omega\subseteq B_1(0),$ we have
\be\label{c2b1.1}
\bar{\nabla}_i\bar\nabla_j\lt(\frac{u}{\w}\rt)-\frac{u}{\w}\delta_{ij}=\w\gas_{ik}u_{kl}\gas_{lj},
\ee
where $\bar\nabla$ denotes the covariant derivative with respect to the hyperbolic metric on the Klein ball,
$\w=\sqrt{1-|\xi|^2},$ $\gas_{ik}=\delta_{ik}-\frac{\xi_i\xi_k}{1+\w},$ and $u_{kl}=\frac{\T^2u}{\T\xi_k\T\xi_l}.$
\end{lemm}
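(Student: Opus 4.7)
My plan is to read the identity in the $G$-orthonormal frame $e_\alpha:=\w\gas_{\alpha i}\p_{\xi_i}$ on the Klein ball; the key algebraic fact that makes this work is that $\w\gas$ is the symmetric positive square root of the inverse hyperbolic metric, so that in this frame the $\delta_{\alpha\beta}$ on the left matches the natural metric while the Klein-coordinate expression on the right pulls back to $\w\gas u''\gas$.

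Three computational ingredients are needed. First, a direct calculation gives the hyperbolic metric $G_{ij}=\delta_{ij}/\w^2+\xi_i\xi_j/\w^4$ with inverse $G^{ij}=\w^2(\delta_{ij}-\xi_i\xi_j)$, and, after the cubic-in-$\xi$ pieces cancel, the clean Christoffel symbols $\Gamma^k_{ij}=(\xi_i\delta_{jk}+\xi_j\delta_{ik})/\w^2$. Second, differentiating $v=u/\w$ twice using $\w_{,i}=-\xi_i/\w$ and subtracting $\Gamma^k_{ij}v_{,k}$, all the first-derivative cross terms cancel and the $u\xi_i\xi_j$ coefficients collapse, yielding
\[\bn_i\bn_j v=\frac{u_{ij}}{\w}+v\,G_{ij},\]
where $u_{ij}=\p^2 u/\p\xi_i\p\xi_j$. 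Third, a short expansion using $|\xi|^2=(1-\w)(1+\w)$ gives $(\w\gas)_{ik}(\w\gas)_{kj}=\w^2(\delta_{ij}-\xi_i\xi_j)=G^{ij}$, confirming that the frame $e_\alpha:=\w\gas_{\alpha i}\p_{\xi_i}$ is $G$-orthonormal.

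Assembling these in the frame $e_\alpha$, where $G_{\alpha\beta}=\delta_{\alpha\beta}$,
\[\bn_\alpha\bn_\beta v-v\delta_{\alpha\beta}=(\w\gas)_{\alpha i}(\w\gas)_{\beta j}\bigl[\bn_i\bn_j v-vG_{ij}\bigr]=\frac{(\w\gas)_{\alpha i}(\w\gas)_{\beta j}u_{ij}}{\w}=\w\,\gas_{\alpha k}u_{kl}\gas_{l\beta},\]
which is the identity claimed by the lemma. The main obstacle is the bookkeeping in the Christoffel and Hessian computations in the second paragraph; a useful sanity check throughout is the point $\xi=0$, where $\w=1$, $\gas=I$, $G=I$, $\Gamma=0$, and both sides of the claim reduce to $u_{ij}$.
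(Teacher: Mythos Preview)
Your proof is correct and follows essentially the same route as the paper: both compute the Christoffel symbols $\Gamma^s_{mn}=(\xi_m\delta_{ns}+\xi_n\delta_{ms})/\w^2$ of the Klein metric, verify that $e_\alpha=\w\gas_{\alpha i}\partial_{\xi_i}$ is orthonormal, and then evaluate the covariant Hessian of $u/\w$ in that frame. The only organizational difference is that you first isolate the clean coordinate identity $\bar\nabla_m\bar\nabla_n v=u_{mn}/\w+vG_{mn}$ and then transform, whereas the paper carries out the frame transformation and the simplification simultaneously in one long expansion; your packaging is slightly tidier but the content is the same.
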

\begin{proof}
Let's denote $g_{ij}=\delta_{ij}-\xi_i\xi_j,$ then $g^{ij}=\delta_{ij}+\frac{\xi_i\xi_j}{1-|\xi|^2}.$ Recall Lemma 4.5 of \cite{CT}, we know that
the hyperbolic metric on the Klein ball is
\[k_{ij}=\frac{1}{1-|\xi|^2}\lt(\delta_{ij}+\frac{\xi_i\xi_j}{1-|\xi|^2}\rt)=\frac{g^{ij}}{{\w}^2}.\]
Now, for any function $u$ defined on $\Omega\subseteq B_1,$ let $\td{u}=\frac{u}{\w}.$ Also note that
$\gas_{ij}$ is the square root of the matrix $(g_{ij}),$ i.e., $g_{ij}=\gas_{im}\gas_{mj}.$
We define a new frame $\{e_1, \cdots, e_n\}$ by
\[e_i=\w\gas_{ik}\frac{\T}{\T\xi_k},\,\,\mbox{for $1\leq i\leq n$.}\]
It's clear that
\[k(e_i, e_j)={\w}^2\gas_{im}k_{mn}\gas_{nj}=\delta_{ij}.\]
Hence, $\{e_1, \cdots, e_n\}$ is an orthonormal frame with respect to the metric $k.$
Let's calculate the Christoffel symbol $\Gamma_{mn}^s$. Recall that
\be\label{c2b1.2}
\Gamma_{mn}^s=\frac{1}{2}k^{sl}\left(\frac{\T k_{ml}}{\T \xi_n}+\frac{\T k_{nl}}{\T \xi_m}-\frac{\T k_{mn}}{\T \xi_l}\right).
\ee
A straightforward calculation yields
\be\label{c2b1.3}
\frac{\T k_{ml}}{\T \xi_n}= 2\frac{\xi_ng^{ml}}{{\w}^4}+\frac{1}{{\w}^2}
\left(\frac{\xi_l\delta_{mn}+\xi_m\delta_{ln}}{{\w}^2}+\frac{2\xi_l\xi_m\xi_n}{{\w}^4}\right).
\ee
Combining \eqref{c2b1.3} with \eqref{c2b1.2}, we obtain
\begin{eqnarray}
\Gamma_{mn}^s=\frac{1}{{\w}^2}\left(\xi_m\delta_{ns}+\xi_n\delta_{ms}\right).
\end{eqnarray}
Moreover, it's easy to see that
\begin{eqnarray}
\frac{\T \tilde{u}}{\T \xi_m}&=&\frac{1}{\w}\frac{\T u}{\T \xi_m}+\frac{\xi_m u}{{\w}^3}\\
\frac{\T^2 \tilde{u}}{\T \xi_m\T \xi_n}&=&\frac{1}{\w}\frac{\T^2 u}{\T\xi_m\T \xi_n}+\frac{\xi_m}{{\w}^3}\frac{\T u}{\T \xi_n}
+\frac{\xi_n}{{\w}^3}\frac{\T u}{\T \xi_m}+\frac{\delta_{mn} u}{{\w}^3}+3\frac{\xi_m\xi_n}{{\w}^5}u.
\end{eqnarray}
Therefore we have,
\be\label{c2b1.4}
\begin{aligned}
\bar\nabla_i\bar\nabla_j\td{u}&={\w}^2\gas_{im}\bar\nabla_{{\T}_m}\bar\nabla_{{\T}_n}\td{u}\gas_{nj}\\
&={\w}^2\gas_{im}\lt(\frac{{\T}^2\td{u}}{\T\xi_m\T\xi_n}-\Gamma^s_{mn}\frac{\T\td{u}}{\T\xi_s}\rt)\gas_{jn}\\
&={\w}^2\gas_{im}\lt[\frac{u_{mn}}{\w}+\frac{\xi_mu_n}{{\w}^3}+\frac{u_m\xi_n}{{\w}^3}+\frac{\delta_{mn}u}{{\w}^3}\right.\\
&+\frac{3\xi_m\xi_nu}{{\w}^5}-\lt(\frac{\xi_n}{{\w}^2}\delta_{sm}+\frac{\xi_m}{{\w}^2}\delta_{ns}\rt)
\left.\lt(\frac{u_s}{\w}+\frac{\xi_su}{{\w}^3}\rt)\rt]\gas_{nj}\\
&=\w\gas_{im}u_{mn}\gas_{nj}+\xi_iu_n\gas_{nj}+\xi_ju_m\gas_{mi}+\frac{ug_{ij}}{\w}+\frac{3\xi_i\xi_j}{\w}u\\
&-{\w}^2\gas_{im}
\lt(\frac{u_m\xi_n}{{\w}^3}+\frac{\xi_n\xi_m}{{\w}^5}u+\frac{\xi_mu_n}{{\w}^3}+\frac{\xi_m\xi_n}{{\w}^5}u\rt)\gas_{nj}\\
&=\w\gas_{im}u_{mn}\gas_{nj}+\frac{u}{\w}\delta_{ij},
\end{aligned}
\ee
where we have used $\gas_{ik}\xi_k=\w\xi_i.$ This completes the proof of Lemma \ref{c2blem1.1}.
\end{proof}

Recall equation \eqref{cs1.2}
\[F\lt(\w\gas_{ik}\ujs_{kl}\gas_{lj}\rt)=\frac{1}{\binom{n}{k}^\frac{1}{k}}.\]
We denote
\[a^*_{kl}=\w\gas_{ki}\ujs_{ij}\gas_{jl},\]
then
\[G^{ij}=\frac{\T F}{\T a^*_{kl}}\frac{\T a^*_{kl}}{\ujs_{ij}}=\w\gas_{ik}F^{kl}\gas_{lj}.\]
It's easy to see that
\[L\ujs=\frac{1}{\binom{n}{k}^\frac{1}{k}},\]
where $L:=G^{ij}\T_{\xi_i}\T_{\xi_j}.$

\begin{lemm}
\label{c2blem1.2}
For any constant $a>0$, there exist positive constants $t,N>0$ large, and $\delta$ sufficiently small, such that the function
$\ba=\ujs-\lujs+td-Nd^2$ satisfies
\[L\ba\leq- a \sum G^{ii}\,\,\mbox{in $\td{F}_J\cap B_\delta,$}\]
and
\[\ba\geq 0\,\, \mbox{on $\T(\td{F}_J\cap B_\delta).$}\]
Here, $d$ is the Euclidean distance function to $\T\td{F}_J$, $B_\delta$ is a ball of radius $\delta$ centered at a point on $\T\td{F}_J$,
and $\lujs$ is the subsolution to \eqref{cs1.2} constructed in Subsection \ref{c1b}.
\end{lemm}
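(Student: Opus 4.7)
The plan is to decompose $L\ba = L(\ujs - \lujs) + t\,Ld - N\,L(d^2)$ and bound each piece using three ingredients: concavity of $F$ for the first, the Euclidean strict convexity of $\p\td{F}_J$ for the second and third, and the minimum principle for positivity on $\p(\td{F}_J\cap B_\delta)$. The combination $td - Nd^2$ plays the role of a classical CNS-type barrier, and Lemma \ref{c2blem1.1} is what allows the whole computation to be carried out in the Klein ball with the Euclidean operator $L = G^{ij}\p_{\xi_i}\p_{\xi_j}$, so that the Euclidean convexity of $\td{F}_J$ enters directly.

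First I would establish $L(\ujs - \lujs)\leq 0$. Since $F = \bigl[\s_n/\s_{n-k}\bigr]^{1/k}$ is concave on $\mathcal{S}_+$ and the map $u^*\mapsto a^*[u^*] = \w\gas_{ki} u^*_{ij}\gas_{jl}$ is linear in $u^*$, concavity yields $F(a^*[\lujs]) - F(a^*[\ujs]) \leq G^{ij}(\lujs - \ujs)_{ij}$. Combined with the subsolution inequality \eqref{c1e1.1} and $F(a^*[\ujs]) = \binom{n}{k}^{-1/k}$, this gives $L(\ujs-\lujs)\leq 0$. Because $\ujs = \lujs$ on $\p\td{F}_J$ and $L$ has no zeroth-order term, the minimum principle forces $\ujs\geq\lujs$ in $\td{F}_J$; this is the key ingredient for positivity of $\ba$ on the curved part of $\p B_\delta$.

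Next I would analyze the barrier pieces. Smoothness and strict convexity of $\p\td{F}_J$ give $|Dd|=1$ together with the one-sided Hessian bound $d_{ij} \leq -c_0(\delta_{ij} - d_i d_j)$ in $\td{F}_J\cap B_\delta$, for a constant $c_0>0$ determined by the principal curvatures of $\p\td{F}_J$. Contracting against the positive-definite matrix $G^{ij}$ and plugging into $L(td - Nd^2) = (t - 2Nd)G^{ij}d_{ij} - 2N G^{ij}d_id_j$ produces the one-sided estimate $L\ba \leq -c_0(t-2Nd)\sum G^{ii} + \bigl[c_0(t - 2Nd) - 2N\bigr]\,G^{ij}d_id_j$.

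The constants are then chosen in a coordinated order: take $t$ large so that $c_0 t/4 \geq a$; take $N\geq c_0 t/2$ so that the bracket $c_0(t-2Nd) - 2N$ is nonpositive on $B_\delta$; and shrink $\delta$ so that both $t - 2Nd \geq t/2$ and $t - Nd \geq 0$ hold in $B_\delta$. This yields the interior bound $L\ba \leq -(c_0 t/2)\sum G^{ii}\leq -a\sum G^{ii}$, while the boundary condition follows from $\ba = 0$ on $\p\td{F}_J \cap B_\delta$ and from the combination of $\ujs\geq\lujs$ with $d(t-Nd)\geq 0$ on $\td{F}_J \cap \p B_\delta$. The main obstacle is that $D^2 d$ always has a null direction along the inward normal, so one cannot obtain $d_{ij}\leq -c\,\delta_{ij}$ directly; the purpose of the $-Nd^2$ term is precisely to absorb the leftover positive tangential contribution $c_0\,G^{ij}d_id_j$, which forces $t$ and $N$ to be taken large in a coordinated way, with $N$ scaling linearly in $t$.
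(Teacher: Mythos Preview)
Your proposal is correct and follows essentially the same approach as the paper. The paper also uses concavity of $F$ to get $L(\ujs-\lujs)\leq 0$ and then shows the matrix inequality $tD^2d - ND^2(d^2)\leq -C_0 t\,I_n$ by computing the eigenvalues explicitly (tangential eigenvalues $\frac{-\ka_i}{1-\ka_i d}(t-2Nd)$ and normal eigenvalue $-2N$), which is exactly the content of your inequality $d_{ij}\leq -c_0(\delta_{ij}-d_id_j)$ together with your bracket analysis $[c_0(t-2Nd)-2N]\leq 0$. Two small remarks: your commentary calls $G^{ij}d_id_j$ a ``tangential'' contribution, but $Dd$ is the inward normal to $\p\td{F}_J$, so that term is the \emph{normal} leftover; and the paper does not spell out the boundary verification $\ba\geq 0$ on $\p(\td{F}_J\cap B_\delta)$, so your explicit treatment of that part (via $\ujs\geq\lujs$ from the minimum principle and $d(t-Nd)\geq 0$) is a welcome addition.
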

\begin{proof}
By the convexity of $\T\td{F}_J$, in a small neighborhood of $\T\td{F}_J$ we have (see \cite{GT})
\[\kappa[D^2 d]=\lt[\frac{-\ka_1}{1-\ka_1d}, \frac{-\ka_2}{1-\ka_2d}, \cdots, \frac{-\ka_{n-1}}{1-\ka_{n-1}d}, 0\rt],\]
where $\ka_i>0,\,\,1\leq i\leq n-1,$ are the principal curvatures of $\p \tilde{F}_J$.
Therefore,
\[\ka[tD^2d-ND^2d^2]=\lt[\frac{-\ka_1}{1-\ka_1d}(t-2Nd), \cdots, \frac{-\ka_{n-1}}{1-\ka_{n-1}d}(t-2Nd), -2N\rt],\]
which implies
$$tD^2d-ND^2d^2\leq- C_0t I_n,$$
where we choose $\delta>0$ small such that $\delta N<\frac{t}{4}$, $\kappa_i\delta<1/2$, $2N>C_0t$, $C_0>0$ depends on $\T\td{F}_J,$ and $I_n$ is the $n$ dimensional identity matrix.
Moreover, since $\lujs$ is a subsolution of \eqref{cs1.2}, we know that
\[L\lujs\geq\frac{1}{\binom{n}{k}^{\frac{1}{k}}} \,\,\mbox{in $\td{F}_J\cap B_\delta.$}\]
It's clear that when $N=N(\td{F}_J, a, \delta), t=t(\td{F}_J, a)>0$ large, we have
\[L\ba\leq-C_0t\sum G^{ii}\leq- a\sum G^{ii}.\]
\end{proof}
From Lemma \ref{c2blem1.1} and Lemma \ref{c2blem1.2} we conclude
\begin{lemm}
For any constant $c>0$, there exist positive constants $t,N>0$ large, and $\delta$ sufficiently small, such that the function
\[\td{\ba}=\frac{\ba}{\w}=\frac{\ujs-\lujs+td-Nd^2}{\w}\]
satisfies
\be\label{c2b1.5}
\mathfrak{L}\td{\ba}:=F^{ij}\bar\nabla_{ij}\td{\ba}-\td{\ba}\sum F^{ii}\leq -c(1+\sum F^{ii})\,\,\mbox{in $U_{J\delta}$},
\ee
and
\be\label{c2b1.6}
\td{\ba}\geq 0\,\,\mbox{on $\partial U_{J\delta},$}
\ee
where $U_{J\delta}=P^{-1}(\td{F}_J\cap B_\delta)\subset\mathbb{H}^n(-1).$
\end{lemm}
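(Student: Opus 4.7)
The plan is to transfer the Euclidean differential inequality for $\ba$ provided by Lemma~\ref{c2blem1.2} into the hyperbolic one required for $\td{\ba}=\ba/\w$, using Lemma~\ref{c2blem1.1} to identify $\mathfrak{L}\td{\ba}$ with $L\ba$, and then replacing the weight $\sum G^{ii}$ by $1+\sum F^{ii}$ through a trace computation.

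First, apply Lemma~\ref{c2blem1.1} with $u=\ba$:
$$\bar\nabla_i\bar\nabla_j\td{\ba}-\td{\ba}\,\delta_{ij}=\w\gas_{ik}\ba_{kl}\gas_{lj}.$$
Contracting with $F^{ij}$ and using the symmetry of $\gas$, the right-hand side becomes
$$F^{ij}\w\gas_{ik}\ba_{kl}\gas_{lj}=\ba_{kl}\bigl(\w\gas_{ki}F^{ij}\gas_{jl}\bigr)=G^{kl}\ba_{kl}=L\ba,$$
so $\mathfrak{L}\td{\ba}=L\ba$. Applying Lemma~\ref{c2blem1.2} with a constant $a>0$ to be fixed later yields $L\ba\leq -a\sum G^{ii}$ in $\td{F}_J\cap B_\delta$. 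Since $\w>0$ on $\bar{\td{F}}_J\subset\subset B_1$, the boundary inequality $\ba\geq 0$ on $\partial(\td{F}_J\cap B_\delta)$ from Lemma~\ref{c2blem1.2} immediately gives~\eqref{c2b1.6}.

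Next I would compare $\sum G^{ii}$ with $\sum F^{ii}$. Using the identity $(\gas)^2=g$ with $g_{kl}=\delta_{kl}-\xi_k\xi_l$, a direct trace calculation gives
$$\sum_i G^{ii}=\w\sum_{k,l}F^{kl}g_{kl}.$$
The matrix $(g_{kl})$ has eigenvalues $1$ (with multiplicity $n-1$) and $\w^2$, so $g\geq \w^2 I_n$; combined with the positivity of $(F^{kl})$ this gives $F^{kl}g_{kl}\geq \w^2\sum F^{ii}$, hence
$$\sum_i G^{ii}\geq \w^3\sum_i F^{ii}\geq \w_0^3\sum_i F^{ii},$$
where $\w_0:=\min_{\bar{\td{F}}_J}\w>0$. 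Moreover, since $F$ is $1$-homogeneous and concave on $\mathcal{S}_+$, applying concavity at the identity $I$ and using Euler's identity $F^{ij}(A)A_{ij}=F(A)$ yields
$$\sum_i F^{ii}(A)\geq F(I)=\binom{n}{k}^{-1/k}=:\mu>0$$
pointwise on $U_{J\delta}$.

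Combining the three estimates,
$$\mathfrak{L}\td{\ba}=L\ba\leq -a\w_0^3\sum_i F^{ii}\leq -\tfrac12 a\w_0^3\bigl(\mu+\sum_i F^{ii}\bigr),$$
so choosing $a$ so large that $\tfrac12 a\w_0^3\min(\mu,1)\geq c$ yields~\eqref{c2b1.5}. The only real subtlety I anticipate is the pointwise validity of $\sum F^{ii}\geq\mu$: it relies on the argument matrix $\bar\nabla v-v I$ remaining in the admissible cone $\mathcal{S}_+$ throughout $U_{J\delta}$, i.e.\ on the strict convexity of $\ujs$, which is guaranteed by the $C^0$ and $C^1$ bounds and by the admissibility of the subsolution $\lujs$ already constructed.
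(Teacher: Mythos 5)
Your proposal is correct and follows essentially the same route as the paper, which simply concludes this lemma by combining Lemma \ref{c2blem1.1} (giving $\mathfrak{L}\td{\ba}=L\ba$) with Lemma \ref{c2blem1.2}. The only details the paper leaves implicit are exactly the ones you supply: the trace comparison $\sum G^{ii}=\w F^{kl}g_{kl}\geq \w^3\sum F^{ii}$ with $\w$ bounded below on $\bar{\td{F}}_J$, and the bound $\sum F^{ii}\geq\binom{n}{k}^{-1/k}$ from concavity and homogeneity (which the paper itself invokes later in Subsection \ref{c2g}), so your choice of $a$ large correctly yields \eqref{c2b1.5} and \eqref{c2b1.6}.
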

The rest of $C^2$ boundary estimates follows from \cite{Guan} directly.

\subsection{Global $C^2$ estimates.}
\label{c2g}
In this subsection, we will still use the hyperbolic model and study the equation \eqref{es1.2}. We will estimate
$|\bn^2 v|$ on $\bar{U}_J.$ Keep in mind that a bound on $|\bn^2 v|$ yields a bound on $|\p^2\ujs|.$
\begin{lemm}
\label{c2glem2}
Let $v$ be the solution of \eqref{es1.2}. Denote the eigenvalues of $(v_{ij}-v\delta_{ij})$ by $\lambda[v_{ij}-v\delta_{ij}]=(\lambda_1, \cdots, \lambda_n).$  Then, $|\lambda[v_{ij}-v\delta_{ij}]|$ is bounded from above.
\end{lemm}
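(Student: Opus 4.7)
Since the boundary $C^2$ bound has been established in Subsection~\ref{c2b}, it suffices to bound $\lambda_{\max}$ of $h_{ij}:=v_{ij}-v\delta_{ij}$ at an interior maximum. I would introduce the test function
\[
W(x)\;=\;\log\lambda_{\max}(h(x))\,+\,\phi\bigl(|\bn v|^2\bigr)\,+\,B\,v(x),
\]
with $\phi$ a convex, increasing function and $B>0$ to be chosen. If the maximum of $W$ on $\bar U_J$ lies on $\partial U_J$, the boundary estimate delivers the bound; otherwise the maximum is attained at an interior point $x_0$, where I choose an orthonormal frame diagonalizing $h_{ij}$ with eigenvalues $\lambda_1\geq\cdots\geq\lambda_n>0$ (strict positivity is forced by $h\in\mathcal{S}_+$).

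At $x_0$ I invoke the first-order condition $W_i=0$, which gives an explicit expression for $h_{11,i}/\lambda_1$ in terms of $\phi'$ and $\bn v$, and the elliptic inequality $G^{ii}W_{ii}\leq 0$, where $G^{ij}:=F^{ij}(h)>0$. Differentiating the equation $F(h_{ij})=\mathrm{const}$ twice in the $e_1$-direction yields
\[
G^{ii}h_{ii,1}=0, \qquad G^{ii}h_{ii,11}=-G^{ij,kl}h_{ij,1}h_{kl,1},
\]
while the concavity of $F=(\s_n/\s_{n-k})^{1/k}$ on $\Gamma_n$ (a classical G\aa{}rding-type fact, cf.~\cite{CNS}) furnishes $-G^{ij,kl}h_{ij,1}h_{kl,1}\geq 0$. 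Commuting $h_{11,ii}$ with $h_{ii,11}$ via the Ricci identity in $\mathbb{H}^n(-1)$ produces only terms linear in $\lambda_1$ times the known $C^0$, $C^1$ bounds from Subsections~\ref{c0b}--\ref{c1b}, leading to a schematic inequality
\[
0\;\geq\;-\frac{G^{ii}h_{11,i}^2}{\lambda_1^2}\,+\,2\phi'\bigl(|\bn v|^2\bigr)G^{ii}\lambda_i^2\,+\,B\,G^{ii}\lambda_i\,-\,C\Bigl(1+\sum G^{ii}\Bigr).
\]

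The decisive step is handling the third-order error $G^{ii}h_{11,i}^2/\lambda_1^2$. For indices $i$ with $\lambda_i\geq\lambda_1/2$ the first-order condition of $W$ directly controls $h_{11,i}$ by $\phi'$ and $v_j$; for indices with $\lambda_i<\lambda_1/2$ I would exploit the explicit form of $G^{ij,kl}$ at a diagonal matrix together with the inverse-concavity of $\sigma_k^{1/k}$, which produces a good term of the form $G^{ii}h_{11,i}^2/(\lambda_1(\lambda_1-\lambda_i))$ absorbing the bad contribution. Choosing $\phi$ so that $\phi'$ is bounded below on the compact range of $|\bn v|^2$ (e.g.\ $\phi(t)=-\log(C_0-t)$ with $C_0$ slightly larger than $\sup|\bn v|^2$) and then $B$ large, the quadratic-in-$\lambda_i$ term $2\phi' G^{ii}\lambda_i^2$ dominates and forces $\lambda_1\leq C$.

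I expect the main obstacle to be precisely this third-order estimate, because $F$ here does not satisfy hypothesis (1.7) of \cite{Guan}; the combined concavity and inverse-concavity of $F$ must compensate, but the splitting into the two eigenvalue regimes has to be carried out carefully. Lemma~\ref{c2blem1.1} enters at the very end, when transferring the resulting estimate on $\bn^2 v$ back to an estimate on $\p^2\ujs$ on the Klein ball, which is what the approximation scheme of Section~\ref{cs} ultimately needs.
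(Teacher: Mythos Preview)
Your plan is a reasonable general template (of Chou--Wang/Guan type) and can be pushed through, but the paper takes a markedly simpler route that avoids both the gradient term $\phi(|\bn v|^2)$ and the eigenvalue-regime splitting. The paper's test function is just $\log\Lambda_{11}+Nx_{n+1}$, exploiting the identity $\bn_i\bn_j x_{n+1}=x_{n+1}\delta_{ij}$ on $\mathbb{H}^n(-1)$ (equation~\eqref{gg1.2}); this produces the clean good term $Nx_{n+1}\sum F^{ii}$, which is bounded below since $\sum F^{ii}\ge\binom{n}{k}^{-1/k}$ by concavity and homogeneity. For the third-order terms the paper does a direct algebraic computation specific to $F=(\sigma_n/\sigma_{n-k})^{1/k}$: using $kF^{k-1}F^{ii}=\sigma_{n-1}(\lambda|i)\sigma_{n-k}(\lambda|i)/\sigma_{n-k}^2$, it shows (equation~\eqref{3.14}) that $\dfrac{F^{ii}-F^{11}}{\lambda_1-\lambda_i}\geq \dfrac{F^{ii}}{\lambda_1}$ for every $i\geq 2$. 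This single inequality absorbs all cross third-order terms at once, leaving only $F^{11}\Lambda_{111}^2/\Lambda_{11}^2=F^{11}N^2(x_{n+1})_1^2$; since $kF^{k-1}F^{11}\leq 1/(\lambda_1\binom{n}{k})$, the resulting inequality $0\geq -C/\lambda_1+(Nx_{n+1}-1)\sum F^{ii}$ forces $\lambda_1\leq C$ in one line.

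Two cautions about your version. First, on $\bar U_J$ one has $v<0$ (since $\ujs<0$ on $\overline{\td F}_J$), so $BG^{ii}v_{ii}=BF+Bv\sum G^{ii}$ contributes a term of the \emph{wrong} sign that grows with $B$; you would need to replace $Bv$ by something like $Nx_{n+1}$ or $-Bv$ to get a useful auxiliary. Second, for this operator the quantity $\sum G^{ii}\lambda_i^2$ stays bounded along $\{F=\text{const}\}$ as $\lambda_1\to\infty$, so it does not ``dominate'' in the sense of growing with $\lambda_1$; rather, the argument closes because the residual third-order term is $O(1/\lambda_1)$ while the zeroth-order balance is strictly positive. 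Your inverse-concavity splitting would eventually deliver this, but the paper's explicit identity~\eqref{3.14} gets there without any case analysis.
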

\begin{proof}
In this proof we will denote $\Lambda_{ij}=v_{ij}-v\delta_{ij},$ where $v_{ij}$ is the covariant derivatives with respect to the hyperbolic metric. We will use $\lambda=(\lambda_1,\lambda_2,\cdots,\lambda_n)$ to denote the eigenvalues of the matrix $\Lambda$. From Subsection \ref{gg} and \ref{lt} we know that $\lambda=\ka^*=\ka^{-1}.$

Let's recall the following geometric formulae:
\be\label{comm}
\begin{aligned}
\Lambda_{ijk}&=\Lambda_{ikj}\\
\Lambda_{lkji}-\Lambda_{lkij}&=v_{lkji}-v_{lkij}\\
&=-v_{lj}\delta_{ik}+v_{li}\delta_{jk}-v_{jk}\delta_{il}+v_{ik}\delta_{jl}.
\end{aligned}
\ee
Set
$$M=\max\limits_{P\in \overline{U}_J}\max\limits_{|\xi|=1, \xi\in T_{P}\mathbb{H}^n}\left(\log \Lambda_{\xi\xi}+Nx_{n+1}\right),$$
where $N$ is a constant to be determined later and $x_{n+1}$ is the coordinate function. By the discussion in Subsection \ref{c2b} we already know that
$|\lambda|$ is bounded on $\p U_J.$ Therefore, in the following, we may assume $M$ is achieved at $P_0\in U_J$ for some direction $\xi_0.$ Choosing a orthonormal frame $\{\tau_1, \cdots, \tau_n\}$ around $P_0$ such that $\tau_1(P_0)=\xi_0$ and $\Lambda_{ij}(P_0)=\lambda_i\delta_{ij}.$

Now, let's consider the test function
\[\phi=\log\Lambda_{11}+Nx_{n+1}.\]
At its maximum point $P_0$, we have
\begin{eqnarray}\label{3.6}
0=\phi_i&=&\frac{\Lambda_{11i}}{\Lambda_{11}}+N(x_{n+1})_i\\
0\geq\phi_{ii}&=&\frac{\Lambda_{11ii}}{\Lambda_{11}}-\frac{\Lambda_{11i}^2}{\Lambda_{11}^2}+N(x_{n+1})_{ii}.
\end{eqnarray}
Using $(x_{n+1})_{ij}=x_{n+1}\delta_{ij}$, we get
\begin{eqnarray}\label{3.8}
0\geq F^{ii}\phi_{ii}=\frac{F^{ii}\Lambda_{11ii}}{\Lambda_{11}}-\frac{F^{ii}\Lambda_{11i}^2}{\Lambda_{11}^2}+Nx_{n+1}\sum_i F^{ii}.
\end{eqnarray}
Applying \eqref{comm}, we obtain
\begin{eqnarray}
\Lambda_{11ii}=\Lambda_{i11i}=\Lambda_{i1i1}+v_{ii}-v_{11}=\Lambda_{ii11}+\Lambda_{ii}-\Lambda_{11}.\nonumber
\end{eqnarray}
Thus, we have
\begin{eqnarray}\label{3.10}
F^{ii}\Lambda_{11ii}=F^{ii}\Lambda_{ii11}+F^{ii}\Lambda_{ii}-\Lambda_{11}\sum_{i}F^{ii}.
\end{eqnarray}
Differentiating equation \eqref{es1.2} twice we get
\begin{eqnarray}\label{3.11}
F^{ii}\Lambda_{ii11}&=&-F^{pq,rs}\Lambda_{pq1}\Lambda_{rs1}\\
&=&-F^{pp,qq}\Lambda_{pp1}\Lambda_{qq1}-\sum_{p\neq q}\frac{F^{pp}-F^{qq}}{\lambda_p-\lambda_q}\Lambda_{pq1}^2,\nonumber
\end{eqnarray}
here the second equality comes from Theorem 5.5 of \cite{Bal}.
Since $(\sigma_n/\sigma_{n-k})^{1/k}$ is concave, the first term of \eqref{3.11} is nonnegative.
Combing \eqref{3.8}-\eqref{3.11}, we obtain at $P_0$,
\begin{eqnarray}\label{3.12}
0\geq F^{ii}\phi_{ii}&\geq &-\frac{1}{\Lambda_{11}}\sum_{p\neq q}\frac{F^{pp}-F^{qq}}{\lambda_p-\lambda_q}\Lambda_{pq1}^2-\frac{F^{ii}\Lambda_{11i}^2}{\Lambda_{11}^2}+(Nx_{n+1}-1)\sum_i F^{ii}\\
&\geq&\frac{1}{\Lambda_{11}}\sum_{i\neq 1}\frac{F^{ii}-F^{11}}{\lambda_1-\lambda_i}\Lambda_{11i}^2-\frac{F^{ii}\Lambda_{11i}^2}{\Lambda_{11}^2}+(Nx_{n+1}-1)\sum_i F^{ii}.\nonumber
\end{eqnarray}
In order to analyze the right hand side of inequality \eqref{3.12}, we need an explicit expression of $F^{ii}$.
By a straightforward calculation we have,
\begin{eqnarray}\label{FF}
kF^{k-1}F^{ii}=\frac{\sigma_n^{ii}\sigma_{n-k}-\sigma_n\sigma_{n-k}^{ii}}{\sigma_{n-k}^2}.
\end{eqnarray}
Note that
\begin{eqnarray}
&&\sigma_n^{ii}\sigma_{n-k}-\sigma_n\sigma_{n-k}^{ii}\nonumber\\
&=&\sigma_{n-1}(\lambda|i)(\lambda_i\sigma_{n-k-1}(\lambda|i)+\sigma_{n-k}(\lambda|i))
-\lambda_i\sigma_{n-1}(\lambda|i)\sigma_{n-k-1}(\lambda|i)\nonumber\\
&=&\sigma_{n-1}(\lambda|i)\sigma_{n-k}(\lambda|i)\nonumber.
\end{eqnarray}
Here and in the following, $\sigma_l(\kappa|a)$ and $\s_l(\ka|ab)$ are the $l$-th elementary
symmetric polynomials of $\kappa_1,\kappa_2,\cdots,\kappa_n$ with
$\kappa_a=0$ and $\ka_a=\ka_b=0,$ respectively. Therefore, we get
\begin{eqnarray}\label{F}
kF^{k-1}F^{ii}=\frac{\sigma_{n-1}(\lambda|i)\sigma_{n-k}(\lambda|i)}{\sigma_{n-k}^2}.
\end{eqnarray}
This implies
\begin{eqnarray}
kF^{k-1}\lt(F^{ii}-F^{11}\rt)&=&\frac{1}{\sigma_{n-k}^2}[\sigma_{n-1}(\lambda|i)\sigma_{n-k}(\lambda|i)-\sigma_{n-1}(\lambda|1)\sigma_{n-k}(\lambda|1)]\\
&=&\frac{\sigma_{n-2}(\lambda|1i)}{\sigma_{n-k}^2}[\lambda_1\sigma_{n-k}(\lambda|i)-\lambda_i\sigma_{n-k}(\lambda|1)]\nonumber\\
&=&\frac{\sigma_{n-2}(\lambda|1i)(\lambda_1-\lambda_i)}{\sigma_{n-k}^2}[(\lambda_1+\lambda_i)\sigma_{n-k-1}(\lambda|1i)+\sigma_{n-k}(\lambda|1i)]\nonumber.
\end{eqnarray}
Thus, for $i\geq 2$, we have
\begin{eqnarray}\label{3.14}
&&kF^{k-1}\lt(\frac{F^{ii}-F^{11}}{\lambda_1-\lambda_i}-\frac{F^{ii}}{\lambda_1}\rt)\\
&=&\frac{\sigma_{n-2}(\lambda|1i)}{\sigma_{n-k}^2}[(\lambda_1+\lambda_i)\sigma_{n-k-1}(\lambda|1i)+\sigma_{n-k}(\lambda|1i)-\sigma_{n-k}(\lambda|i)]\nonumber\\
&=&\frac{\sigma_{n-2}(\lambda|1i)}{\sigma_{n-k}^2}\lambda_i\sigma_{n-k-1}(\lambda|1i)\nonumber\\
&=&\frac{\sigma_{n-1}(\lambda|1)}{\sigma_{n-k}^2}\sigma_{n-k-1}(\lambda|1i)\nonumber\\
&>&0.\nonumber
\end{eqnarray}
Plugging \eqref{3.6} and \eqref{3.14} into \eqref{3.12} we conclude,
\begin{eqnarray}\label{F-inequality}
0\geq F^{ii}\phi_{ii}&\geq& -F^{11}\frac{\Lambda_{111}^2}{\Lambda_{11}^2}+(Nx_{n+1}-1)\sum_iF^{ii}\\
&=&-F^{11}N^2(x_{n+1})_1^2+(Nx_{n+1}-1)\sum_iF^{ii}.\nonumber
\end{eqnarray}
Notice that from \eqref{FF} we can see that,
$$kF^{k-1}F^{11}\leq \frac{\sigma_{n}^{11}\sigma_{n-k}}{\sigma_{n-k}^2}=\frac{1}{\Lambda_{11}\binom{n}{k}}.$$

Moreover, since $F$ is concave and homogenous of degree one we can derive
$$\sum_{i}F^{ii}\geq \binom{n}{k}^{-\frac{1}{k}}.$$
Now, by letting $N=2$ in \eqref{F-inequality} we obtain that if $M$ is achieved at an interior point $P_0\in U_J,$ then at this point $\lambda_1$ is bounded from above. Therefore, we showed that $M$ is bounded from above which in turn gives
an upper bound for $|\bn^2 v|.$
\end{proof}
Combining the results in Subsection \ref{c0b}, \ref{c1b}, \ref{c2b}, and \ref{c2g}, we conclude that the
approximating Dirichlet problem \eqref{cs1.2} is solvable.

\bigskip
\section{Convergence of solutions to a entire constant $\sigma_k$ curvature hypersurface}
\label{cv}
Let $\uj$ be the Legendre transform of $\ujs,$ where $\ujs$ is the solution of \eqref{cs1.2}.
We want to show there exists a subsequence of $\{\uj\}$ that
converges to the desired entire solution $u$ of \eqref{int1.0}.

\subsection{Local $C^0$ estimates}
\label{lc0}
Recall that Lemma \ref{cblem3} tells us
\be\label{lc01.1}
\hs(\xi)<\lus\,\,\text{in $\overline{\td{F}}_J$}.
\ee
Now we will show
\begin{lemm}
\label{lc0lem1}
$\uj<h$ in $\Omega_J:=D\ujs(\td{F}_J)\subset\R^n.$
\end{lemm}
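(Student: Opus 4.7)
The strategy is in two stages: first, establish the \emph{dual} inequality $\hs<\ujs$ on all of $\overline{\td{F}}_J$ between the Legendre transforms; second, transfer this to $\uj<h$ on $\Omega_J$ via the Legendre involution. On $\p\td{F}_J$, the boundary condition in \eqref{cs1.2} gives $\ujs=\vjs=\lus$, so Lemma \ref{cblem3} (whose proof produces strict inequality thanks to the strict convexity of $\lu$) yields $\hs<\ujs$ on $\p\td{F}_J$. In the interior, $\ujs$ solves \eqref{cs1.2} while $\hs$ is a subsolution of the same equation, as recorded in the paragraph preceding Subsection \ref{c0b}. In fact, by Maclaurin applied to $\ka[\M_h]$ together with the fact that $\M_h$ is not umbilical (its Gauss map image is $\td{F}\subsetneq B_1$), this subsolution inequality is strict: $F(\w\gas_{ik}\hs_{kl}\gas_{lj})=\s_k(\ka[\M_h])^{-1/k}>\binom{n}{k}^{-1/k}$ pointwise. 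Classical comparison for strict sub/super solutions of the fully nonlinear elliptic equation then upgrades the boundary inequality to $\hs<\ujs$ throughout $\overline{\td{F}}_J$.

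For the second stage, fix any $x\in\Omega_J=D\ujs(\td{F}_J)$ and let $\xi_0:=D\uj(x)\in\td{F}_J$. By Legendre duality $\xi_0$ is the unique point at which the supremum $\sup_{\xi}\bigl(x\cdot\xi-\ujs(\xi)\bigr)$ is attained, so
\[
\uj(x)=x\cdot\xi_0-\ujs(\xi_0)<x\cdot\xi_0-\hs(\xi_0)\leq\sup_{\xi}\bigl(x\cdot\xi-\hs(\xi)\bigr)=h(x),
\]
where the strict middle inequality applies Step 1 at $\xi=\xi_0$, and the final identity uses $h^{**}=h$, which is valid because $h$ is smooth and strictly convex on $\R^n$.

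The main expected obstacle is justifying the strict interior inequality in Step 1. If one prefers not to rely on strictness of the subsolution via Maclaurin, an alternative is to first apply the weak comparison principle to obtain $\hs\leq\ujs$, and then invoke the strong maximum principle for the concave fully nonlinear operator $F$ linearized at the smooth solution $\ujs$: any interior touching point would force the difference $\hs-\ujs$ to propagate zeros along the linearized equation out to the boundary, contradicting the strict boundary inequality from Lemma \ref{cblem3}. Either route produces the strict gap $\hs<\ujs$ that powers the Legendre-duality inequality above.
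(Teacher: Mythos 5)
Your proposal is correct and takes essentially the same route as the paper: the crucial ordering $\hs<\ujs$ is exactly the $C^0$ estimate of Subsection \ref{c0b} (maximum principle plus Lemma \ref{cblem3}), and your Legendre-duality transfer matches the paper's one-line computation, which simply evaluates at $\eta$ with $x=D\hs(\eta)$ and uses strict convexity of $\hs$ where you use $h^{**}=h$. One small caveat: your claim that $\M_h$ is nowhere umbilic because its Gauss map image is $\td{F}\subsetneq B_1$ is not airtight (a proper Gauss map image does not rule out isolated umbilic points), but this is harmless since your fallback — weak comparison plus the strong maximum principle, or just the minimum principle for the linearized operator combined with the strict boundary gap from Lemma \ref{cblem3} — gives $\hs<\ujs$ in $\td{F}_J$, which is all that is needed.
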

\begin{proof} For any $x\in \Omega_J$, we suppose
$$x=D\ujs(\xi)=Dh^*(\eta).$$ Then, we have
$$u^J(x)-h(x)=x\cdot \xi-\ujs(\xi)-x\cdot\eta+h^*(\eta)<x\cdot(\xi-\eta)+h^*(\eta)-h^*(\xi)<0,$$
where the last inequality comes from the strict convexity of $h^*.$
\end{proof}
Similarly we can show
\begin{lemm}
\label{lc0lem2}
$\uj>\lu$ in $B_{J-1}(0)\subset D\lus(\td{F}_J),$ where $B_{J-1}=\{x\in\R^n| |x|<J-1\}.$ Note that, here without loss of generality, we can always choose
$\td{F}_J$ such that $B_{J-1}(0)\subset D\lus(\td{F}_J).$
\end{lemm}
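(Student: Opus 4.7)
My plan is to establish the comparison $\uj > \lu$ by reducing it to a Legendre-side comparison $\ujs < \lus$ in the interior of $\td F_J$, exactly parallel to how Lemma \ref{lc0lem1} rested on the reverse inequality $\ujs > \hs$ from Subsection \ref{c0b}.

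The key ingredient is to show $\ujs < \lus$ in the interior of $\td F_J$. Since $\lu$ has constant Gauss curvature one, Maclaurin's inequality $(\sigma_k/\binom{n}{k})^{1/k} \geq (\sigma_n)^{1/n} = 1$ yields $\sigma_k(\ka[\lu]) \geq \binom{n}{k}$, which on the Legendre side (see Subsection \ref{lt}) is exactly $F(\w\gas_{ik}\lus_{kl}\gas_{lj}) \leq \binom{n}{k}^{-1/k}$; i.e. $\lus$ is a supersolution of \eqref{cs1.2}. Combined with the boundary equality $\ujs = \vjs = \lus|_{\p\td F_J}$, the comparison principle for the concave elliptic operator gives $\ujs \leq \lus$ in $\td F_J$, and the strong maximum principle upgrades this to strict inequality throughout the interior (for nontrivial $\F$, $\lu$ is not a hyperboloid, so the Maclaurin inequality is strict on a set of positive measure and $\lus$ is a strict supersolution there).

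With the strict interior comparison in hand, the conclusion is a short Legendre-identity computation. Given $x \in B_{J-1}(0) \subset D\lus(\td F_J)$, the freedom in choosing $\td F_J$ lets us pick $\eta$ in the interior of $\td F_J$ with $D\lus(\eta) = x$; by Legendre duality,
\[
\lu(x) = x\cdot\eta - \lus(\eta).
\]
Testing the definition of $\uj$ as the Legendre transform of $\ujs$ at the admissible point $\zeta = \eta \in \td F_J$ gives $\uj(x) \geq x\cdot\eta - \ujs(\eta)$, and subtracting,
\[
\uj(x) - \lu(x) \;\geq\; \lus(\eta) - \ujs(\eta) \;>\; 0.
\]

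The principal obstacle is the first step: the comparison principle for the concave operator in \eqref{cs1.2} is standard but not explicitly recorded in the paper, and the strong maximum principle must be coupled with the observation that $\lu$ is not the hyperboloid for $\F$ nontrivial in order to obtain strict interior inequality. Note that this proof is in fact more direct than its companion Lemma \ref{lc0lem1}: no strict convexity of $\lus$ is invoked and no verification that $x\in\Omega_J$ is required, since testing the Legendre sup at the single point $\zeta = \eta$ already isolates the positive gap $\lus(\eta) - \ujs(\eta)$.
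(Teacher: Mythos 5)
Your proposal is correct and is essentially the route the paper intends: the paper offers no separate proof of this lemma (it is dispatched with ``Similarly we can show''), the intended argument being the mirror of Lemma \ref{lc0lem1} --- a Legendre-side comparison between $\ujs$ and the barrier, followed by the duality computation. Your chain $\uj(x)\geq x\cdot\eta-\ujs(\eta)$, $\lu(x)=x\cdot\eta-\lus(\eta)$ with $\eta=D\lu(x)\in \mathrm{int}\,\td{F}_J$ is exactly that mirror (indeed slightly cleaner than the proof of Lemma \ref{lc0lem1}, since no second Legendre point $\xi$ and no membership $x\in\Omega_J$ are needed), and the ingredient you use --- that $\lus$ is a supersolution of \eqref{cs1.1} by Maclaurin's inequality --- is already recorded by the paper at the beginning of Section \ref{cs}; the comparison $\ujs\leq\lus$ is then the exact analogue of the inequality $\ujs>\hs$ obtained in Subsection \ref{c0b}.

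The one soft spot is the strictness $\ujs<\lus$ in the interior of $\td{F}_J$. Your justification --- Maclaurin is strict on a set of positive measure because $\lu$ is not a hyperboloid --- places the strict supersolution property somewhere in $\R^n$, but not necessarily inside $D\lus(\td{F}_J)$, so it does not by itself exclude the degenerate alternative $\ujs\equiv\lus$ on $\td{F}_J$ in the strong maximum principle dichotomy for $w=\lus-\ujs$. To close this, note that if $\ujs\equiv\lus$ on $\td{F}_J$ then $\lus$ solves \eqref{cs1.2}, hence $\s_k(\ka[\M_{\lu}])=\binom{n}{k}$ on the open set $D\lus(\mathrm{int}\,\td{F}_J)$; combined with $\s_n(\ka[\M_{\lu}])=1$, equality in Maclaurin forces $\ka[\M_{\lu}]=(1,\dots,1)$ there, so $\M_{\lu}$ contains an open piece of a unit hyperboloid, and since $\lu$ is a real-analytic strictly convex solution of the constant Gauss curvature equation, unique continuation makes $\lu$ a hyperboloid on all of $\R^n$, contradicting $D\lu(\R^n)=\mathrm{int}\,\conv(\F)\subsetneq B_1$ (here $\p\F\neq\emptyset$ in the setting of Theorem \ref{intth1.1}; the case $\F=\dS^{n-1}$ is treated separately in Subsection \ref{sp}). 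With this short repair your argument is complete; note also that only the non-strict inequality $\lu\leq\uj$ is used downstream, so the issue is cosmetic for the rest of the paper.
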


Applying Lemma \ref{lc0lem1} and Lemma \ref{lc0lem2}, we conclude that
\[\lu<\uj<h\,\,\text{in $B_{J-1}(0)$.}\]

\subsection{Local $C^1$ estimates}
\label{lg}
In this subsection we will prove the local $C^1$ estimates. We will need the following lemma which was proved in Section 5 of \cite{BS}.
\begin{lemm}
\label{lc1lem1}(Lemma 5.1 in \cite{BS})
Let $\Omega\subset \R^n$ be a bounded open set. Let $u, \bar{u}, \psi:\Omega\goto\R$ be strictly spacelike.
Assume that near $\partial\Omega,$ we have $\psi>\bar{u}$ and everywhere in $\Omega$ $u\leq\bar{u}.$ We also assume $u$ is convex.
Consider the set, where $u>\psi.$ For every $x$ in that set, we get the following gradient estimate for $u:$
\[\frac{1}{\sqrt{1-|Du|^2}}\leq\frac{1}{u(x)-\psi(x)}\cdot\sup\limits_{\{u>\psi\}}\frac{\bar{u}-\psi}{\sqrt{1-|D\psi|^2}}.\]
\end{lemm}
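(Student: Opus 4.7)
I will prove the estimate by a barrier/comparison argument: exploit the convexity of $u$ via its tangent hyperplane, use the sign change of $(\text{tangent plane})-\psi$ to produce a ``contact point'' with the graph of $\psi$, and then perform a Minkowski-geometric comparison to extract the Lorentz factors $\sqrt{1-|Du|^2}$ and $\sqrt{1-|D\psi|^2}$.

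Fix $x_0\in\{u>\psi\}$; we wish to bound $1/\sqrt{1-|Du(x_0)|^2}$. Introduce the supporting hyperplane $P(y):=u(x_0)+Du(x_0)\cdot(y-x_0)$ to the graph of $u$ at $x_0$. Convexity of $u$ gives $u(y)\geq P(y)$ on $\Omega$, so combined with $u\leq\bar u$ we have $\bar u\geq P$ throughout $\Omega$. In the Minkowski space $\R^{n,1}$, $P$ is a spacelike hyperplane with future-timelike unit normal $\nu_u=(Du(x_0),1)/\sqrt{1-|Du(x_0)|^2}$, and the key quantity to bound, $\frac{u(x_0)-\psi(x_0)}{\sqrt{1-|Du(x_0)|^2}}$, is precisely the Minkowski perpendicular distance from $(x_0,\psi(x_0))$ up to the plane $P$.

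Next, observe that $P(x_0)-\psi(x_0)=u(x_0)-\psi(x_0)>0$ while $P\leq \bar u<\psi$ in a neighbourhood of $\partial\Omega$ (by hypothesis). Hence $\{P>\psi\}$ is a non-empty proper open subset of $\Omega$, and its boundary in $\Omega$ contains a point $y_0$ with $P(y_0)=\psi(y_0)$. Since $u(y_0)\geq P(y_0)=\psi(y_0)$, the point $y_0$ lies in $\overline{\{u\geq\psi\}}$, and standard approximation (using continuity of $\bar u-\psi$ and $|D\psi|$) allows us to treat $y_0$ as an element of $\{u>\psi\}$. At $y_0$ we then have $\bar u(y_0)-\psi(y_0)=\bar u(y_0)-P(y_0)\geq 0$, and the contact relation rewrites as $Du(x_0)\cdot(y_0-x_0)=\psi(y_0)-u(x_0)$.

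The heart of the proof is the Minkowski calculation. The chord $V:=(y_0-x_0,\psi(y_0)-\psi(x_0))$ joins two points on the spacelike graph of $\psi$ and is therefore spacelike. Using the contact identity, one checks directly that
\[
\langle V,\nu_u\rangle_M=\frac{Du(x_0)\cdot(y_0-x_0)-(\psi(y_0)-\psi(x_0))}{\sqrt{1-|Du(x_0)|^2}}=-\frac{u(x_0)-\psi(x_0)}{\sqrt{1-|Du(x_0)|^2}},
\]
so the desired left-hand side equals $|\langle V,\nu_u\rangle_M|$. Decomposing $V$ Minkowski-orthogonally with respect to the spacelike graph of $\psi$ at $y_0$ as $V=V_\parallel+\alpha\,\nu_\psi(y_0)$, the scalar $\alpha$ measures the residual of $V$ from the tangent plane of $\psi$ at $y_0$; the barrier inequality $\bar u\geq P$ together with $P(y_0)=\psi(y_0)$ forces the vertical height represented by $\alpha\,\nu_\psi(y_0)$ to be bounded by $\bar u(y_0)-\psi(y_0)$, and the factor $\sqrt{1-|D\psi(y_0)|^2}$ enters as the conversion factor between vertical and Minkowski-normal lengths on the graph of $\psi$. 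Chaining the inequalities yields
\[
\frac{u(x_0)-\psi(x_0)}{\sqrt{1-|Du(x_0)|^2}}\leq \frac{\bar u(y_0)-\psi(y_0)}{\sqrt{1-|D\psi(y_0)|^2}}\leq \sup_{\{u>\psi\}}\frac{\bar u-\psi}{\sqrt{1-|D\psi|^2}},
\]
which rearranges to the claimed estimate. The hardest step is this Minkowski decomposition: producing exactly the factor $\sqrt{1-|D\psi(y_0)|^2}$ is delicate because $V$ is only a chord (not an exact tangent vector) of the graph of $\psi$, so the residual has to be controlled using the achronal character of spacelike graphs together with the barrier $\bar u\geq P$.
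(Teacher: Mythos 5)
Your setup is fine up to and including the identity $\frac{u(x_0)-\psi(x_0)}{\sqrt{1-|Du(x_0)|^2}}=|\langle V,\nu_u\rangle_M|$, but the step you yourself call the heart of the proof has a genuine gap: the intermediate inequality you chain,
\[
\frac{u(x_0)-\psi(x_0)}{\sqrt{1-|Du(x_0)|^2}}\;\leq\;\frac{\bar u(y_0)-\psi(y_0)}{\sqrt{1-|D\psi(y_0)|^2}},
\]
is in general \emph{false} at a point $y_0$ chosen merely so that $P(y_0)=\psi(y_0)$. At such a point you know only $\bar u(y_0)-\psi(y_0)\geq 0$ and nothing about $D\psi(y_0)$; moreover, for a spacelike chord $V$ the projections onto the two different timelike unit normals $\nu_u$ and $\nu_\psi(y_0)$ are not comparable, so no decomposition of $V$ can convert one Lorentz factor into the other. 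A concrete one-dimensional counterexample to your intermediate step: let $u=\bar u$ be affine with slope $1/2$ on a long interval containing $x_0$ (bending up only near $\partial\Omega$), let $\psi=u-1$ near $x_0$ and then let $\psi$ rise with slope $9/10$ until it crosses the tangent line $P$ at a point $y_0$ still inside the affine region (and let $\psi$ rise similarly toward the other end of $\Omega$, so that $\psi>\bar u$ near $\partial\Omega$). Then $\bar u(y_0)=P(y_0)=\psi(y_0)$, so your right-hand side is $0$, while the left-hand side equals $(1-1/4)^{-1/2}>0$; smoothing does not change this. (There are also smaller issues — the chord $V$ is only guaranteed spacelike if the segment $[x_0,y_0]$ lies in $\Omega$, and membership in $\overline{\{u\geq\psi\}}$ is not enough, one needs $y_0\in\overline{\{P>\psi\}}\subseteq\overline{\{u>\psi\}}$ — but these are fixable.)

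The missing idea, and the way the cited proof in \cite{BS} actually runs, is to pick the comparison point by \emph{maximizing} $P-\psi$ rather than by first contact. Since $\psi>\bar u\geq u\geq P$ near $\partial\Omega$ and $P(x_0)-\psi(x_0)=u(x_0)-\psi(x_0)>0$, the set $\{P\geq\psi\}$ is a nonempty compact subset of $\Omega$, so $P-\psi$ attains a positive maximum at an interior point $y_*$, and there $D\psi(y_*)=DP=Du(x_0)$. This gradient matching is precisely what exchanges the Lorentz factors:
\[
\frac{u(x_0)-\psi(x_0)}{\sqrt{1-|Du(x_0)|^2}}\;=\;\frac{P(x_0)-\psi(x_0)}{\sqrt{1-|Du(x_0)|^2}}\;\leq\;\frac{P(y_*)-\psi(y_*)}{\sqrt{1-|D\psi(y_*)|^2}}\;\leq\;\frac{\bar u(y_*)-\psi(y_*)}{\sqrt{1-|D\psi(y_*)|^2}},
\]
and since $y_*\in\{P>\psi\}\subseteq\{u>\psi\}$, the supremum in the lemma dominates the last term; no Minkowski-orthogonal decomposition of a chord is needed. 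If you wish to keep your geometric language, the correct picture is that at $y_*$ the tangent plane to the graph of $\psi$ is parallel to $P$, and only for parallel planes does the vertical gap convert into the Lorentz-normal distance with the factor $\sqrt{1-|D\psi(y_*)|^2}$ you are after.
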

From this Lemma we can see that, in order to prove the local $C^1$ estimates, we only need to construct a suitable spacelike function
$\psi.$ We will complete this task in the rest of this subsection.
\begin{lemm}
\label{lc1lem2}
Let $\z(x)=\sqrt{f^2(x_1)+|\bar{x}|^2},$ $\bar{x}=(x_2, \cdots, x_n),$ be the standard semitrough that satisfies
$\sigma_1(\ka[z(x)])=n.$ We have
\be\label{lc1.1}
\z(x)\leq V_{\bar{\B}_+}(x)+\frac{1}{\sqrt{1+|\bar{x}|^2}}\lt(1-V_{\bar{\B}_+}\rt)\lt(\frac{x}{|x|}\rt)
\ee
as $|x|\goto\infty,$ where $\bar{\B}_+:=\{\xi| |\xi|= 1\,\,\mbox{and $\xi_1\geq 0$}\}$ and $V_{\bar{\B}_+}(x)=\sup\limits_{\xi\in\bar{\B}_+}\xi\cdot x.$
Indeed, the inequality is uniform, i.e., for any $\e>0,$
there exists an $R_\e>0$ large, such that when $R>R_\e,$
\be\label{lc1.1*}
\z(x)< V_{\bar{\B}_+}(x)+\frac{1}{\sqrt{1+|\bar{x}|^2}}\lt(1-V_{\bar{\B}_+}\rt)\lt(\frac{x}{|x|}\rt)+\e.
\ee

\end{lemm}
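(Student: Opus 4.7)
I will prove the uniform bound \eqref{lc1.1*} directly; \eqref{lc1.1} then follows upon sending $\e \to 0^+$. The argument is a case analysis on the sign of $x_1$, driven by two asymptotic properties of the profile $f=f_1$ satisfying $\tfrac{f''}{(1-f'^2)^{3/2}}+\tfrac{n-1}{f(1-f'^2)^{1/2}}=n$ (cf.\ the proof of Lemma \ref{cblem1}(4)): first, $f(x_1)\to (n-1)/n$ as $x_1\to-\infty$ (this is the identity $l_1=(n-1)/n$); second, $f(x_1)-x_1\to 0$ as $x_1\to+\infty$, and in fact a standard asymptotic expansion of the ODE (with ansatz $1-f'\sim \tfrac{1}{2x_1^2}$) gives $f^2(x_1)-x_1^2\to 1$. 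Consequently the constants $C_1:=\sup_{x_1\geq 0}(f^2(x_1)-x_1^2)$ and $C_2:=\sup_{x_1\leq 0}f^2(x_1)$ are finite.

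\emph{Case 1:} $x_1\geq 0$. Then $V_{\bar{\B}_+}(x)=|x|$ and $(1-V_{\bar{\B}_+})(x/|x|)=0$, so the right-hand side of \eqref{lc1.1*} equals $|x|+\e$. The identity
\[
\z(x)-|x|=\frac{f^2(x_1)-x_1^2}{\z(x)+|x|}\leq \frac{C_1}{2|x|}
\]
immediately gives $\z(x)<|x|+\e$ whenever $|x|>C_1/(2\e)$.

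\emph{Case 2:} $x_1<0$. Now $V_{\bar{\B}_+}(x)=|\bar x|$ and $(1-V_{\bar{\B}_+})(x/|x|)=(|x|-|\bar x|)/|x|$, so the task is to show
\[
\frac{f^2(x_1)}{\sqrt{f^2+|\bar x|^2}+|\bar x|} < \frac{|x|-|\bar x|}{|x|\sqrt{1+|\bar x|^2}}+\e.
\]
If $|\bar x|\geq C_2/\e$, the left-hand side is at most $C_2/(2|\bar x|)\leq \e/2$ and the RHS correction is nonnegative, so we are done. Otherwise $|\bar x|$ is bounded by $C_2/\e$; since $|x|$ is large, $|x_1|$ must be large, and by property (i), $f(x_1)\leq (n-1)/n+\delta$ for any preassigned $\delta>0$ provided $|x_1|$ is sufficiently large. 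The key elementary inequality
\[
\frac{c^2}{\sqrt{c^2+r^2}+r}\leq \frac{1}{\sqrt{1+r^2}}\qquad (0\leq c\leq 1,\ r\geq 0),
\]
verified by squaring and noting that $c^2(c^2-1)+r^2(c^4-2)\leq 0$ when $c\leq 1$, applied with $c=(n-1)/n<1$ together with monotonicity of $f\mapsto f^2/(\sqrt{f^2+r^2}+r)$ in $f$, yields $\tfrac{f^2}{\sqrt{f^2+|\bar x|^2}+|\bar x|}\leq \tfrac{1}{\sqrt{1+|\bar x|^2}}+O(\delta)$ uniformly on $|\bar x|\leq C_2/\e$. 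The RHS meanwhile equals $\tfrac{1}{\sqrt{1+|\bar x|^2}}-\tfrac{|\bar x|}{|x|\sqrt{1+|\bar x|^2}}$, so the discrepancy between the two sides is $O(\delta)+O(|\bar x|/|x|)$, which is smaller than $\e$ once $\delta$ is small enough and $|x|$ large enough (both thresholds depending on $\e$).

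Taking $R_\e$ to be the maximum of the finitely many thresholds produced above closes the argument. The main obstacle is the sub-case $x_1<0$ with $|\bar x|$ bounded: here the leading-order terms on the two sides coincide asymptotically, so one needs the strict inequality $(n-1)/n<1$ in the elementary inequality above to open a margin that absorbs the $O(\delta)$ error coming from the convergence $f(x_1)\to (n-1)/n$. This strictness ultimately reflects the fact that the lightlike correction $(n-1)/n$ of the CMC semitrough is genuinely smaller than the hyperbolic correction $1/\sqrt{1+|\bar x|^2}$ at infinity, which is exactly the room needed to use $\z$ (shifted and rescaled) as a cutoff function in Section \ref{cv}.
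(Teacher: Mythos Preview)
Your proposal is correct and follows essentially the same approach as the paper: the same split into $x_1\geq 0$ versus $x_1<0$, the same rationalization identities for $\z-V_{\bar\B_+}$, and the same key elementary inequality $\tfrac{l_1^2}{\sqrt{l_1^2+r^2}+r}\leq \tfrac{1}{\sqrt{1+r^2}}$ with $l_1=(n-1)/n<1$ in the sub-case where $x_1\to-\infty$ and $|\bar x|$ stays bounded. Your write-up is in fact more careful than the paper's about the uniform version \eqref{lc1.1*}: you make the finiteness of $C_1,C_2$ explicit, and you introduce the parameter $\delta$ to quantify the error $f(x_1)-(n-1)/n$ rather than passing directly to the limit, which is exactly what is needed to extract an $R_\e$ that works uniformly.
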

\begin{proof}
For our convenience, we will prove \eqref{lc1.1} for $n=2.$ It's easy to see that when $n>2$ the proof is the same.
We also want to point out that we will apply Lemma 5.1 in \cite{CT} throughout the proof.

Case 1. When $x_1\geq 0,$ a straightforward calculation yields
\be\label{lc1.2}
\begin{aligned}
\z(x)-V_{\bar{\B}_+}(x)&=\sqrt{f^2(x_1)+x_2^2}-\sqrt{x_1^2+x_2^2}\\
&=\frac{f^2(x_1)-x_1^2}{\sqrt{f^2(x_1)+|x_2|^2}+\sqrt{x_1^2+x_2^2}}\goto 0\,\,\mbox{as $|x|\goto\infty.$}\\
\end{aligned}
\ee
Case 2. When $x_1<0,$ by a direct calculation we have
\be\label{lc1.3}
\z(x)-V_{\bar{\B}_+}(x)=\frac{f^2(x_1)}{\sqrt{f^2(x_1)+x_2^2}+|x_2|}.
\ee
We'll discuss \eqref{lc1.3} in three cases.\\
i). As $x_1\goto-\infty,$ $x_2$ is bounded, it's easy to see that
\[\frac{1}{\sqrt{1+x_2^2}}\geq\frac{l_1^2}{\sqrt{l_1^2+x_2^2}+|x_2|},\,\,\mbox{where $l_1=\frac{n-1}{n}$}.\]
ii). When $x_1$ is bounded $|x_2|\goto\infty,$ we have both
$\z(x)-V_{\bar{\B}_+}(x)$ and $\frac{1}{\sqrt{1+x_2^2}}\lt(1-V_{\bar{\B}_+}\lt(\frac{x}{|x|}\rt)\rt)$
go to $0.$\\
iii). When $|x_1|$, $|x_2|$ $\goto \infty,$ we again get both
$\z(x)-V_{\bar{\B}_+}(x)$ and $\frac{1}{\sqrt{1+x_2^2}}\lt(1-V_{\bar{\B}_+}\lt(\frac{x}{|x|}\rt)\rt)$
go to $0.$\\
It's easy to see that \eqref{lc1.1*} follows from \eqref{lc1.2} and \eqref{lc1.3}. Therefore, the Lemma is proved.
\end{proof}

Next, we will construct our spacelike function $\psi.$
\begin{lemm}
\label{lc1lem3}
Let $A_0=A_0(\lambda),$ $B_0=B_0(\lambda)$ be large numbers depending on $\lambda\in(0, 1].$ Then when $R_0>A_0,$ $R_1>B_0R_0,$
\be\label{cutoff-function}
\psi=\left\{
\begin{aligned}
&\sqrt{\lambda^2+V^2_{\bar{\B}_+}(x)}+\frac{1}{\sqrt{1+|\bar{x}|^2}}\lt(1-V_{\bar{\B}_+}\lt(\frac{x}{|x|}\rt)\rt),\,\, |x|\geq R_1\\
&\sqrt{\lambda^2+V^2_{\bar{\B}_+}(x)}+\frac{1}{\sqrt{1+|\bar{x}|^2}}\lt(1-V_{\bar{\B}_+}\lt(\frac{x}{|x|}\rt)\rt)\eta(x),\,\, R_2<|x|<R_1\\
&\sqrt{\lambda^2+V^2_{\bar{\B}_+}(x)},\,\,|x|\leq R_0
\end{aligned}
\right.
\ee
is spacelike on $\R^n,$ where $\eta(x)=\frac{|x|-R_0}{R_1-R_0}.$
\end{lemm}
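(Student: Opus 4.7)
\medskip\noindent\textbf{Proof proposal.}
The plan is to check $|D\psi|<1$ pointwise on each of the three open pieces of the definition, after reducing to the nontrivial half-space $\{x_1<0\}$. Since $\bar{\B}_+$ is the upper hemisphere of $\dS^{n-1}$, one has $V_{\bar{\B}_+}(x)=|x|$ for $x_1\geq 0$ and $V_{\bar{\B}_+}(x)=|\bar x|$ for $x_1<0$, so $V_{\bar{\B}_+}^2$ is $C^1$ across $\{x_1=0\}$ (the two gradients agree there) and $V_{\bar{\B}_+}(x/|x|)$ is $C^1$ on $\R^n\setminus\{0\}$. On $\{x_1>0\}$ the three-piece formula collapses in all cases to the hyperboloid $\sqrt{\lambda^2+|x|^2}$, whose gradient has norm $|x|/\sqrt{\lambda^2+|x|^2}<1$. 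Setting $r=|x|$, $s=|\bar x|$, the remaining work lives on $\{x_1<0\}$.

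\medskip\noindent
In the inner region $\{r\leq R_0\}$ one has $\psi=\sqrt{\lambda^2+s^2}$ with $|D\psi|=P:=s/\sqrt{\lambda^2+s^2}<1$. For the outer region $\{r\geq R_1\}$ I would write $\psi=\sqrt{\lambda^2+s^2}+\phi$ with $\phi(r,s)=(r-s)/(r\sqrt{1+s^2})$, and work in the orthogonal frame built from $\bar{\bar x}:=(0,\bar x/s)$ and $\hat e_1$, using $\hat r=(x_1/r)\hat e_1+(s/r)\bar{\bar x}$. Direct computation gives $D\psi=A\,\bar{\bar x}+(x_1/r)\phi_r\hat e_1$ with $A=P+\phi_s+(s/r)\phi_r$, and the identity $\phi_s+(s/r)\phi_r=-s^2(1-s/r)/(r(1+s^2)^{3/2})$ lets one expand $|D\psi|^2$ and regroup to obtain
\begin{equation*}
|D\psi|^2=P^2-(1-s^2/r^2)Z(2P-Z)-V(2U-V),
\end{equation*}
with $U=(s/r)P$, $V=(s/r)(1-s/r)s/(1+s^2)^{3/2}$, $Z=(rs+1)/(r(1+s^2)^{3/2})$. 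In the main case $P\geq Z$ one has $2P-Z\geq 0$ and $U\geq V$, so the correction is non-positive and $|D\psi|^2\leq P^2<1$. The complementary case $P<Z$ forces (via $P\geq s/\sqrt{1+s^2}$ for $\lambda\leq 1$) $s^3<1/r$, so $s$ is tiny and every component of $D\psi$ is $O(1/R_0)$, giving $|D\psi|<1$ once $R_0>A_0(\lambda)$.

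\medskip\noindent
The main obstacle is the middle region $R_0<r<R_1$, where $\psi=\sqrt{\lambda^2+s^2}+\eta(r)\phi$ and
\begin{equation*}
D\psi=(1-\eta)D\psi_{\text{in}}+\eta D\psi_{\text{out}}+\phi\,\eta'(r)\hat r,\qquad \eta'(r)=\frac{1}{R_1-R_0}.
\end{equation*}
A convex-combination bound together with the previous step gives $|(1-\eta)D\psi_{\text{in}}+\eta D\psi_{\text{out}}|^2\leq(1-\eta)P^2+\eta|D\psi_{\text{out}}|^2\leq P^2$. The perturbation $\phi\eta'\hat r$ has magnitude $\leq 1/(R_1-R_0)$, and the delicate regime is large $s$ where the margin $1-P\sim\lambda^2/(2s^2)$ is thin. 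The saving grace is the decay $\phi\leq 1/\sqrt{1+s^2}\sim 1/s$: the ratio of perturbation to margin is $\lesssim 2s/(\lambda^2(R_1-R_0))$, bounded by a $\lambda$-dependent constant below $1$ provided $R_1>B_0R_0$ with $B_0=B_0(\lambda)$ large, so that $(B_0-1)\lambda^2\gg B_0$ is arranged. The residual is absorbed by the negative cancellation in the identity $A_{\text{mid}}-P=\eta\bigl(\phi_s+(s/r)\phi_r\bigr)+(s/r)\phi\eta'$, whose first summand is strictly negative and dominates the positive $\eta'$-contribution whenever $\eta$ is not too small. The hard part is precisely this case split in the middle region: making the thresholds $A_0(\lambda)$, $B_0(\lambda)$ explicit while cleanly handling every sub-regime of $(s,r,\eta)$; continuity of the formula across $\{r=R_0\}$ and $\{r=R_1\}$ and (thanks to $\phi|_{s=r}=0$) across $\{x_1=0\}$ closes the argument.
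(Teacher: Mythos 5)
Your reduction to $\{x_1<0\}$ and your treatment of the inner and outer regions are essentially sound, and the outer-region identity $|D\psi|^2=P^2-(1-s^2/r^2)\,Z(2P-Z)-V(2U-V)$ is exactly correct, giving a cleaner route through $\{|x|\geq R_1\}$ than the paper's polar-coordinate bookkeeping. Two repairs are needed even there: the quoted intermediate identity is wrong as stated, since $\phi_s+(s/r)\phi_r=-\frac{1+rs}{r(1+s^2)^{3/2}}+\frac{s^2}{r^3\sqrt{1+s^2}}$, not $-\frac{s^2(1-s/r)}{r(1+s^2)^{3/2}}$ (your final regrouped formula is nonetheless true); and the bound $|D\psi_{\mathrm{out}}|\leq P$ is established only when $P\geq Z$, so the convex-combination step you invoke in the annulus must treat the complementary case $s^3<1/r$ separately (easy, since there all terms are $O(\lambda^{-1}R_0^{-1/3})$).

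The genuine gap is the middle region $R_0<|x|<R_1$, which is the real content of the lemma and where the paper spends almost its entire proof. Your sketch does not close it: the condition ``$(B_0-1)\lambda^2\gg B_0$'' can never be arranged for $\lambda\in(0,1]$, and the claim that the perturbation-to-margin ratio $\sim 2s/(\lambda^2(R_1-R_0))$ is made less than $1$ by taking $B_0$ large fails in the dangerous sub-regime $s\sim r\sim R_1$, where it is of order $1/\lambda^2\geq 1$ no matter how $R_0,R_1$ are chosen, because $R_1-R_0<R_1$. You correctly note that the positive $(s/r)\phi\eta'$ contribution must instead be absorbed by the negative term $\eta\bigl(\phi_s+(s/r)\phi_r\bigr)$, but you only assert this ``whenever $\eta$ is not too small'' and explicitly leave the full case split in $(s,r,\eta)$, with explicit $A_0(\lambda),B_0(\lambda)$, as ``the hard part''. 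That case split \emph{is} the proof: the paper divides the annulus into $x_2\leq R_0$ and $x_2>R_0$, uses the exact cancellation $DV\cdot D\eta=0$, pairs the dangerous cross term (its \textcircled{7}) against part of the favorable cross term \textcircled{5} (possible because $\phi$ carries the factor $1-s/r$), and measures everything against the margin after factoring out $g^2=\frac{1}{1+x_2^2}$, which converts the thin margin $1-P^2$ into a uniform $\lambda^2$; this is what yields the admissible thresholds $R_0>10/\lambda$ and $R_1>R_0+10R_0/\lambda^2$. Until an estimate of this kind is carried out in every sub-regime of the annulus, your argument proves the lemma only on $\{|x|\leq R_0\}$ and $\{|x|\geq R_1\}$.
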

\begin{proof}
 For any given point $x=(x_1,\bar{x})$, we rotate the coordinate such that, $\bar{x}=(x_2,0,0,\cdots,0)$ with $x_2\geq 0$. Thus, we have
 $|D_{\bar{x}}\psi|=\frac{\partial \psi}{\partial x_2}$.
For our convenience, we will prove this Lemma for $n=2.$ When $n>2$ the proof is the same.
In this proof, we will denote $\vp(x)=\sqrt{\lambda^2+V^2_{\bar{\B}_+}(x)},$ $g(x)=\frac{1}{\sqrt{1+x_2^2}},$
and $V(x)=V_{\bar{\B}_+}\lt(\frac{x}{|x|}\rt).$

It's easy to see that when $x_1\geq 0,$ $\psi(x)=\sqrt{\lambda^2+|x|^2},$ which is obviously spacelike. Thus, in the following, we only need to look at the case
when $x_1<0.$ Without loss of generality, we also assume $x_2\geq 0,$ then we have
\[\vp(x)=\sqrt{\lambda^2+x_2^2}\,\,\mbox{and } V(x)=\frac{x_2}{|x|}.\]

First, let's look at the region $\lt\{x\in\R^n|\,|x|\geq R_1\rt\}\cap\{x\in\R^n|x_1<0, x_2\geq 0\}.$ In this region
\[\psi(x)=\vp(x)+g(x)(1-V(x)).\]
A straightforward calculation gives
\[\vp_1=0,\,g_1=0,\,V_1=-\frac{x_1x_2}{|x|^3},\]
\[\vp_2=\frac{x_2}{\sqrt{\lambda^2+x_2^2}},\, g_2=-g^3x_2,\,\mbox{and } V_2=\frac{x_1^2}{|x|^3}.\]
Thus,
\[\psi_1=\vp_1+g_1(1-V)-gV_1=-gV_1,\]
and
\[\psi_2=\vp_2+g_2(1-V)-gV_2.\]
This yields,
\begin{align*}
|D\psi|^2&=g^2|DV|^2+|D\vp|^2+(1-V)^2g_2^2\\
&+2\vp_2g_2(1-V)-2g\vp_2V_2-2g(1-V)g_2V_2.\\
\end{align*}
Therefore, we get
\be\label{lc1.4}
\begin{aligned}
1-|D\psi|^2&=\frac{\lambda^2}{\lambda^2+x_2^2}-(1-V)^2g^6x_2^2\\
&-g^2\frac{x_1^2}{|x|^4}-2\frac{x_2}{\sqrt{\lambda^2+x_2^2}}(-g^3x_2)(1-V)\\
&+2g\frac{x_2}{\sqrt{\lambda^2+x_2^2}}\frac{x_1^2}{|x|^3}+2\frac{g(1-V)(-g^3x_2)x_1^2}{|x|^3}.\\
\end{aligned}
\ee
In order to simplify \eqref{lc1.4} we will use the polar coordinates and let $x=(x_1, x_2)=(r\cos\theta, r\sin\theta).$
Then we have,
\be\label{lc1.5}
\begin{aligned}
\frac{1-|D\psi|^2}{g^2}&\geq\lambda^2-\frac{(1-\sin\theta)^2r^2\sin^2\theta}{(1+r^2\sin^2\theta)^2}\\
&-\frac{\cos^2\theta}{r^2}+\frac{2x_2^2(1-\sin\theta)}{\sqrt{(\lambda^2+x_2^2)(1+x_2^2)}}\\
&+\frac{2\sin\theta\cos^2\theta}{\sqrt{(\lambda^2+x_2^2)(1+x_2^2)}}-\frac{2\sin\theta\cos^2\theta(1-\sin\theta)}{1+x_2^2}\\
&=\textcircled{1}-\textcircled{2}-\textcircled{3}+\textcircled{4}+\textcircled{5}-\textcircled{6}.\\
\end{aligned}
\ee
It's easy to see that
\[\textcircled{5}-\textcircled{6}\geq 0,\]
\[\textcircled{4}-\textcircled{2}\geq\frac{x_2^2(1-\sin\theta)}{1+x_2^2}\lt[2-\frac{(1-\sin\theta)}{1+x_2^2}\rt]\geq 0,\]
and when $R_1>\frac{1}{\lambda}$
\[\textcircled{1}-\textcircled{3}>0.\]
This implies that when $R_1>\frac{1}{\lambda}$, $\psi$ is spacelike in the region $|x|\geq R_1.$

Next, let's look at the region $\lt\{x\in\R^n|\,R_0<|x|< R_1\rt\}\cap\{x\in\R^n|x_1<0, x_2\geq 0\}.$
In this region, $\psi=\vp+g(1-V)\eta.$ Differentiating it we get
\[\psi_1=-g\eta V_1+g(1-V)\eta_1,\]
and
\[\psi_2=\vp_2+g_2(1-V)\eta-g\eta V_2+g(1-V)\eta_2.\]
Thus,
\begin{align*}
|D\psi|^2&=g^2\eta^2|DV|^2+g^2(1-V)^2|D\eta|^2\\
&+|D\vp|^2+(1-V)^2\eta^2|Dg|^2-2g^2(1-V)\eta V_1\eta_1\\
&+2(1-V)\eta\vp_2g_2-2g\eta\vp_2V_2+2g(1-V)\vp_2\eta_2\\
&-2g(1-V)\eta^2g_2V_2+2g(1-V)^2\eta g_2\eta_2-2g^2\eta(1-V)V_2\eta_2.
\end{align*}
Since $\eta_1=\frac{\cos\theta}{R_1-R_0}$ and $\eta_2=\frac{\sin\theta}{R_1-R_0},$
we have
\[V_1\eta_1+V_2\eta_2=-\frac{x_1x_2}{|x|^3}\frac{1}{R_1-R_0}\frac{x_1}{|x|}
+\frac{x_1^2}{|x|^3}\frac{1}{R_1-R_0}\frac{x_2}{|x|}=0,\] this yields
\be\label{lc1.6}
\begin{aligned}
1-|D\psi|^2&=\frac{\lambda^2}{\lambda^2+x_2^2}-g^2\eta^2|DV|^2-g^2(1-V)^2|D\eta|^2\\
&-(1-V)^2\eta^2|Dg|^2-2(1-V)\eta\vp_2g_2+2g\eta\vp_2V_2\\
&-2g(1-V)\vp_2\eta_2+2g(1-V)\eta^2g_2V_2-2g(1-V)^2\eta g_2\eta_2\\
&=\frac{\lambda^2}{\lambda^2+x_2^2}-g^2\eta^2\frac{\cos^2\theta}{r^2}-\frac{g^2(1-\sin\theta)^2}{(R_1-R_0)^2}\\
&-(1-\sin\theta)^2\eta^2g^6x_2^2+2(1-\sin\theta)\eta\frac{x_2}{\sqrt{\lambda^2+x_2^2}}g^3x_2\\
&+2g\eta\frac{x_2}{\sqrt{\lambda^2+x_2^2}}\frac{x_1^2}{|x|^3}-2g(1-\sin\theta)\frac{x_2}{\sqrt{\lambda^2+x_2^2}}\frac{1}{R_1-R_0}\frac{x_2}{|x|}\\
&-2g(1-\sin\theta)\eta^2g^3x_2\frac{x_1^2}{|x|^3}+2g(1-\sin\theta)^2\eta g^3x_2\frac{1}{R_1-R_0}\frac{x_2}{|x|}\\
&=\textcircled{1}-\textcircled{2}-\textcircled{3}-\textcircled{4}+\textcircled{5}+\textcircled{6}
-\textcircled{7}-\textcircled{8}+\textcircled{9}.\\
\end{aligned}
\ee
We will divide it into two cases.\\
Case 1. When $x_2\leq R_0,$ by a careful calculation we obtain,
\[\frac{\textcircled{5}}{2}-\textcircled{4}\geq(1-\sin\theta)\eta x_2^2g^4[1-(1-\sin\theta)\eta g^2]\geq 0,\]
\begin{align*}
&\textcircled{6}-\textcircled{8}\\
&\geq 2g^2\eta\sin\theta\cos^2\theta-2g^4(1-\sin\theta)\eta^2\sin\theta\cos^2\theta\\
&=2g^2\eta\sin\theta\cos^2\theta[1-g^2(1-\sin\theta)\eta]\geq 0,\\
\end{align*}
and
\begin{align*}
&\frac{\textcircled{1}-\textcircled{2}-\textcircled{3}-\textcircled{7}}{g^2}\\
&\geq\frac{(1+x_2^2)\lambda^2}{\lambda^2+x_2^2}-\frac{1}{r^2}-\frac{1}{(R_1-R_0)^2}
-\frac{2(1-\sin\theta)r\sin^2\theta\sqrt{1+x_2^2}}{\sqrt{\lambda^2+x_2^2}(R_1-R_0)}\\
&\geq\frac{\lambda^2}{2}-\frac{1}{r^2}-\frac{1}{(R_1-R_0)^2}
+\frac{\sqrt{1+x_2^2}}{\sqrt{\lambda^2+x_2^2}}\lt(\frac{\lambda^2}{2}-\frac{2(1-\sin\theta)x_2\sin\theta}{R_1-R_0}\rt)\\
&\geq\frac{\lambda^2}{2}-\frac{1}{R_0^2}-\frac{1}{(R_1-R_0)^2}
+\frac{\sqrt{1+x_2^2}}{\sqrt{\lambda^2+x_2^2}}\lt(\frac{\lambda^2}{2}-\frac{2R_0}{R_1-R_0}\rt).\\
\end{align*}
Therefore, when $R_0>\frac{10}{\lambda},$ $R_1>R_0+\frac{10}{\lambda^2}R_0,$ we get
$1-|D\psi|^2>0$ in this case.\\
Case 2. When $x_2>R_0,$ we will group our terms differently. First, notice that
\begin{align*}
\frac{\textcircled{5}}{g^2}&=\frac{2(1-\sin\theta)(r-R_0)}{R_1-R_0}\frac{x_2^2}{\sqrt{(1+x_2^2)(\lambda^2+x_2^2)}}\\
&=\frac{2(1-\sin\theta)rx_2^2}{(R_1-R_0)\sqrt{(1+x_2^2)(\lambda^2+x_2^2)}}
-\frac{2(1-\sin\theta)R_0x_2^2}{(R_1-R_0)\sqrt{(1+x_2^2)(\lambda^2+x_2^2)}}\\
&=\textcircled{5}'-\textcircled{5}''
\end{align*}
and
\begin{align*}
\textcircled{5}'-\frac{\textcircled{7}}{g^2}&=\frac{2(1-\sin\theta)rx_2^2}{(R_1-R_0)\sqrt{(1+x_2^2)(\lambda^2+x_2^2)}}
-\frac{2(1-\sin\theta)x_2^2\sqrt{1+x_2^2}}{(R_1-R_0)|x|\sqrt{\lambda^2+x_2^2}}\\
&=\frac{2(1-\sin\theta)x_2^2}{r(R_1-R_0)\sqrt{(1+x_2^2)(\lambda^2+x_2^2)}}(r^2-1-r^2\sin^2\theta)\\
&\geq\frac{-2}{R_0(R_1-R_0)}.\\
\end{align*}
Moreover, it's easy to see that
\[\textcircled{5}''\leq \frac{2R_0}{R_1-R_0}\]
and
\[\frac{\textcircled{4}}{g^2}\leq\frac{1}{R_0^2}.\]
Combining these inequalities we get, when $R_0>\frac{10}{\lambda}$ and $R_1>\frac{10}{\lambda^2}R_0+R_0$
\begin{align*}
&\frac{1}{g^2}(\textcircled{1}-\textcircled{2}-\textcircled{3}-\textcircled{4}+\textcircled{5}-\textcircled{7})\\
&\geq\lambda^2-\frac{1}{R_0^2}-\frac{1}{(R_1-R_0)^2}-\frac{1}{R_0^2}-\frac{2}{R_0(R_1-R_0)}-\frac{2R_0}{R_1-R_0}>0.
\end{align*}
Therefore, we proved that in the region $R_0<|x|<R_1,$ $\psi$ is spacelike. This completes the proof of Lemma \ref{lc1lem3}.
\end{proof}

From the discussion in Subsection \ref{semitroughs} we know that, for every ball $\bar{\B}\subset \dS^{n-1},$
we can first apply Lorentz transform to $\psi,$ then rotate the frame to obtain a new spacelike function $\psi_{\bar{\B}},$
such that its image of the Gauss map is the convex hull of $\bar{\B}$ in $B_1.$ Moreover, by Lemma \ref{lc1lem2} and \ref{lc1lem3},
it's clear that $\psi_{\bar{\B}}\geq \z^1_{\bar{\B}}$ as $|x|\goto\infty.$
Recall that the upper barrier of our supersolution is
$\uu_1(x)=\inf\limits_{\F\subset\bar{\B}, \delta(\bar{\B})\leq\pi-\delta_0}z^1_{\bar{\B}}(x).$
We define
$\psi_1=\inf\limits_{\F\subset\bar{\B}, \delta(\bar{\B})\leq\pi-\delta_0}\psi_{\bar{\B}}(x),$
then when $|x|\goto\infty,$ we have $\psi_1\geq\uu_1.$
Furthermore, when $|x|\leq R_0,$ it's easy to see that $\psi_1(x)=\sqrt{\lambda^2+V^2_{\F}(x)}.$
Now, let $K\subset\R^n$ be a compact set, by Theorem 4.3 of \cite{BS} we know there exists $\delta>0$ such that $\lu(x)-V_\F(x)\geq 2\delta$ on $K.$
We will choose $R_0$ so large that $K\subset\{x\in\R^n|\,|x|<R_0\}.$ Then, we set $\lambda$ small
such that $\lu(x)-(\psi_1+\frac{\delta}{2})\geq \delta$ on $K.$ From the discussion above we know
$(\psi_1+\frac{\delta}{2})-\uu_1\geq\frac{\delta}{2}$ as $|x|\goto\infty.$ Smoothing $\psi_1$ by a standard convolution. Applying Lemma \ref{lc1lem1}
we get the local $C^1$ estimate on $K$ for every spacelike convex function $u$ between $\lu$ and $\uu_1(x).$

\subsection{Local $C^2$ estimates}
\label{lc2}
Without loss of generality, we may assume $\lu(x), h(x)\goto\infty$ as $|x|\goto\infty.$ For if they don't, since the image of the Gauss map of $\M_{\lu}, \M_{h}$ are the same, we can always apply Lorentz transform to
$\M_{\lu}$ and $\M_{h},$ such that the resulting barrier functions $\td{\lu}$ and $\td{h}$ satisfy $\td{\lu}(x), \td{h}(x)\goto\infty$ as
$|x|\goto\infty.$ This is equivalent to cut $\M_{\uj}$ with a tilted plane.
\begin{lemm}
\label{lc2lem1}
Let $\ujs$ be the solution of \eqref{cs1.2}, $\uj$ be the Legendre transform of $\ujs,$ and $\Omega_J=D\ujs(\td{F}_J).$
For any giving $s>1,$ let $J_s>0$ be a positive number such that when $J>J_s,$ $\uj|_{\T\Omega_J}>s.$ Let $\ka_{\max}(x)$
be the largest principal curvature of $\M_{\uj}$ at $x,$ where $\M_{\uj}=\{(x, \uj(x))|x\in\Omega_J\}.$
Then, for $J>J_s$ we have
\[\max\limits_{\M_{\uj}}(s-\uj)\ka_{\max}\leq C_5.\]
Here, $C_5$ only depends on the local $C^1$ estimates of $\uj$.
\end{lemm}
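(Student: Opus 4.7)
The plan is a Pogorelov-type interior $C^2$ estimate adapted to the Minkowski setting. I would introduce a test function of the form
\[W = \log \ka_{\max} + \beta \log(s - \uj) + N \phi,\]
where $\beta, N > 0$ are constants to be chosen, and $\phi$ is an auxiliary function on $\M_{\uj}$; a convenient choice is the angle function $\phi = -\langle \nu, E\rangle = 1/w$ with $w = \sqrt{1-|D\uj|^2}$, which is locally bounded by the $C^1$ estimate of Subsection \ref{lg}. On the compact region $\{X \in \M_{\uj} : \uj(X) \leq s\}$, which lies strictly inside $\M_{\uj}$ by the hypothesis $\uj|_{\T\Omega_J} > s$, the quantity $W$ tends to $-\infty$ on the boundary, so the maximum of $W$ is attained at an interior point $P_0$. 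Choose a local orthonormal frame at $P_0$ diagonalizing $h_{ij}$ with $\tau_1$ realizing $\ka_{\max} = \ka_1$.

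The core computation proceeds in the spirit of the global $C^2$ estimate of Subsection \ref{c2g}. From $W_i(P_0) = 0$ one obtains $h_{11,i}/\ka_1$ as a linear combination of $(\uj)_i/(s-\uj)$ and $\phi_i$. The second-order condition $F^{ii} W_{ii}(P_0) \leq 0$ is then expanded using: (i) Codazzi symmetry and the Ricci identity \eqref{a1.2}, which with the Lorentzian sign of the Gauss equation gives $h_{11,ii} = h_{ii,11} + \ka_i\ka_1(\ka_i - \ka_1)$ for $i \neq 1$; (ii) the concavity identity $\sum F^{ii}h_{ii,11} = -F^{pq,rs}h_{pq,1}h_{rs,1} \geq 0$ obtained by differentiating the equation $F(h_{ij}) = \binom{n}{k}^{1/k}$ twice along $\tau_1$; (iii) the identity $(\uj)_{ij} = h_{ij}/w$ on $\M_{\uj}$, derived from $\uj = -\langle X, E\rangle$ and the Gauss formula, together with Euler's identity yielding $F^{ii}(\uj)_{ii} = F/w$; and (iv) the analogous computation $F^{ii}\phi_{ii} = \phi \sum F^{ii}\ka_i^2$ for the angle function, which follows from $\nabla_j\nabla_i \phi = h_{ij;k}(\uj)_k + h_{ik}h_{jk}\phi$ and $\sum F^{ii}h_{ii;k} = 0$.

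Putting these ingredients together produces, at $P_0$, an algebraic inequality of the form
\[(1 + N\phi)\sum_i F^{ii}\ka_i^2 - \ka_1 F \leq \frac{C(\beta,N) F}{s-\uj} + \frac{C(\beta,N)}{(s-\uj)^2}\sum_i F^{ii}(\uj)_i^2 + \text{mixed terms},\]
where the mixed gradient terms are absorbed via the concavity-remainder inequality $-F^{pq,rs}h_{pq,1}h_{rs,1} \geq 2\sum_{i\neq 1}\frac{F^{ii}-F^{11}}{\ka_1-\ka_i}h_{1i,1}^2 \geq 0$ (nonnegative since $F^{ii}$ and $\ka_i$ are oppositely ordered for $F = \sigma_k^{1/k}$), exactly as in the proof of Lemma \ref{c2glem2}. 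To make the left side dominate as $\ka_1 \to \infty$, I would use the algebraic identity $\sum_i \sigma_{k-1}(\ka|i)\ka_i^2 = \sigma_k\sigma_1 - (k+1)\sigma_{k+1}$ together with Maclaurin's inequality (since $\sigma_k = \binom{n}{k}$ forces $\sigma_{k+1} \leq \binom{n}{k+1}$), giving $\sum F^{ii}\ka_i^2 \geq F(\ka_1 - (n-k))/k$. Choosing $N$ large enough so that $(1+N\phi)/k > 1$ extracts a favorable multiple of $\ka_1 F$ on the left; controlling the remaining right-hand terms via the local $C^1$ bound yields $(s-\uj)\ka_1 \leq C_5$ at $P_0$, and hence globally on $\M_{\uj}$.

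The main obstacle is the reversed sign of the Lorentzian Gauss equation, which flips the sign of the curvature commutator term relative to the Euclidean Pogorelov argument; in our setting this is actually favorable but demands careful sign-tracking throughout the computation. A subtler difficulty is that $F^{11}$ can degenerate as $\ka_1 \to \infty$ (since $\sigma_{k-1}(\ka|1) \to 0$ under the constraint $\sigma_k = \binom{n}{k}$), so the naive lower bound $\sum F^{ii}\ka_i^2 \geq F^{11}\ka_1^2$ is insufficient. This is precisely why the auxiliary function $\phi = 1/w$, whose second covariant derivative multiplicatively boosts the $\sum F^{ii}\ka_i^2$ coefficient, is essential: it effectively replaces the (unhelpful) pointwise bound involving $F^{11}$ by a stronger global bound involving $\sum F^{ii}\ka_i^2$, which grows linearly in $\ka_1$ by the Maclaurin identity above.
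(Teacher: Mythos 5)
Your overall framework (maximum principle for a test function combining $\log$ of curvature, $\log(s-\uj)$, and the angle function $-\langle\nu,\mathbf{E}\rangle$, with the interior maximum guaranteed by $\uj|_{\T\Omega_J}>s$) matches the paper's, and your items (i), (iii), (iv) are sound. But there is a genuine gap at the crux of the argument: the absorption of the third-order terms. You propose to control the bad term $-\sum_i F^{ii}h_{11i}^2/\kappa_1^2$ (and the terms created by substituting the critical-point identity) ``exactly as in the proof of Lemma \ref{c2glem2}'', i.e.\ via the off-diagonal concavity remainder together with the comparison $\frac{F^{ii}-F^{11}}{\kappa_1-\kappa_i}\geq\frac{F^{ii}}{\kappa_1}$. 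That comparison is exactly inequality \eqref{3.14}, which the paper proves for the \emph{dual} operator $F=(\sigma_n/\sigma_{n-k})^{1/k}$ acting on the curvature radii in the hyperbolic (Gauss-map) model. For the operator you are actually using here, $\sigma_k^{1/k}$ acting on the principal curvatures of the Minkowski graph with $1<k<n$, the analogous quantity has the opposite sign: for $\sigma_k$ one computes $\frac{\sigma_k^{ii}-\sigma_k^{11}}{\kappa_1-\kappa_i}-\frac{\sigma_k^{ii}}{\kappa_1}=-\frac{\sigma_{k-1}(\kappa|1i)}{\kappa_1}<0$ on the positive cone, and the extra rank-one term coming from the power $1/k$ does not rescue this. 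So the concavity remainder is strictly too weak to cancel $-\sum_i F^{ii}h_{11i}^2/\kappa_1^2$, and the test function $\log\kappa_{\max}+\beta\log(s-\uj)+N\phi$ does not close for general $k$; this failure is precisely the known obstruction to naive Pogorelov estimates for $\sigma_k$ curvature equations with intermediate $k$.

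This is why the paper does not use $\log\kappa_{\max}$ at all: its test function is $m\log(s-u)+\log P_m-mN\langle\nu,\mathbf{E}\rangle$ with $P_m=\sum_j\kappa_j^m$ and $m$ large, and the third-order terms are handled by the Claim \eqref{c2g1.7}, $A_i+B_i+C_i+D_i-(1+\eta/m)E_i\geq0$, which is verified through Lemmas 8 and 9 of \cite{LRW} (following \cite{GRW}) and uses the convexity of the hypersurface in an essential way; in the one case where the claim can fail ($i=1$), one instead gets $\kappa_k\geq\delta\kappa_1$ and bounds $\kappa_1$ directly from $\sigma_k=\binom{n}{k}\geq\delta^{k-1}\kappa_1^k$. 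Your final step (using $F^{ii}\phi_{ii}=\phi\sum F^{ii}\kappa_i^2$ and $\sum\sigma_k^{ii}\kappa_i^2=\sigma_1\sigma_k-(k+1)\sigma_{k+1}$ to generate a good term growing like $\kappa_1$) is fine and plays the same role as the paper's use of $-N\sigma_k^{ii}\kappa_i^2\langle\nu,\mathbf{E}\rangle$, but it only helps once the third-order terms have been neutralized; as written, your argument has no mechanism for that, so the proof does not go through without importing the $P_m$/large-$m$ machinery (or an equivalent substitute).
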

\begin{proof} For our convenience, we will omit the supscript $J.$ The basic idea of the proof comes from \cite{GRW}. Let's consider the test function
\begin{eqnarray}\label{11.1}
\varphi&=&m\log(s-u)+\log P_m-mN\langle \nu,\mathbf{E}\rangle,
\end{eqnarray}
where $P_m=\sum_j\kappa_j^m,$ $\mathbf{E}=(0,\cdots, 0, 1),$ and
$N, m>0$ are some undetermined constants. Suppose that the
function $\varphi$ achieves its maximum value on $\M$ at some
point $x_0$. We may choose a local orthonormal frame $\{\tau_1, \cdots, \tau_n\}$
such that at $x_0,$ $h_{ij}=\ka_i\delta_{ij}$ and $\ka_1\geq\ka_2\geq\cdots\geq\ka_n.$
Differentiating $\varphi$ twice at $x_0$, we have,
\begin{equation}\label{3.2}
\dfrac{\dsum_j\kappa_j^{m-1}h_{jji}}{P_m}-Nh_{ii}\langle X_i,\mathbf{E}\rangle+\frac{\langle X_i,\mathbf{E}\rangle}{s-u}=0,
\end{equation}
and,
\begin{align}\label{3.3}
0\geq &\dfrac{1}{P_m}[\dsum_j\kappa_j^{m-1}h_{jjii}+(m-1)\dsum_j\kappa_j^{m-2}h_{jji}^2
+\dsum_{p\neq q}\dfrac{\kappa_p^{m-1}-\kappa_q^{m-1}}{\kappa_p-\kappa_q}h_{pqi}^2] \\
&-\dfrac{m}{P_m^2}(\dsum_j\kappa_j^{m-1}h_{jji})^2 -Nh_{imi}\langle X_m,\mathbf{E}\rangle
-Nh_{ii}^2\langle \nu,\mathbf{E}\rangle\nonumber\\
&+\frac{h_{ii}\langle \nu,\mathbf{E}\rangle}{s-u}-\frac{u_i^2}{(s-u)^2}.\nonumber
\end{align}
\par
Note that $u$ satisfies equation \eqref{int1.0}. Now, let's differentiate equation \eqref{int1.0} twice and obtain
\[\sigma_k^{ii}h_{iij}=0\,\,\mbox{and $\sigma_k^{ii}h_{iijj}+\sigma_k^{pq, rs}h_{pqj}h_{rsj}=0.$}\]
Recall that in Minkowski space we have
\[h_{jjii}=h_{iijj}+h_{ii}^2h_{jj}-h_{ii}h^2_{jj}.\]
Therefore,
\be\label{c2g1.5}
\begin{aligned}
0&\geq\frac{1}{P_m}\lt[\sum_j\ka_j^{m-1}\sigma_k^{ii}(h_{iijj}+h_{ii}^2h_{jj}-h_{ii}h_{jj}^2)\right.\\
&\left.+(m-1)\sum_j\ka_j^{m-2}\s_{k}^{ii}h_{jji}^2+\sum\limits_{p\neq q}\frac{\ka_p^{m-1}-\ka_q^{m-1}}{\ka_p-\ka_q}\s_k^{ii}h_{pqi}^2\rt]\\
&-\frac{m}{P_m^2}\s_k^{ii}\lt(\sum_j\ka_j^{m-1}h_{jji}\rt)^2-N\s_k^{ii}\ka_i^2\lt<\nu, \mathbf{E}\rt>
+\frac{k\binom{n}{k}\lt<\nu, \mathbf{E}\rt>}{s-u}-\frac{\sigma_k^{ii}u_i^2}{(s-u)^2}\\
&\geq\frac{1}{P_m}\lt\{\sum_j\ka_j^{m-1}\lt[-k\s_kh_{jj}^2+K(\s_k)_j^2-\s_k^{pq, rs}h_{pqj}h_{rsj}\rt]\right.\\
&\left.+(m-1)\s_k^{ii}\sum_j\ka_j^{m-2}h_{jji}^2+\s_k^{ii}\sum\limits_{p\neq q}\frac{\ka_p^{m-1}-\ka_q^{m-1}}{\ka_p-\ka_q}h_{pqi}^2\rt\}\\
&-\frac{m}{P_m^2}\s_k^{ii}\lt(\sum_j\ka_j^{m-1}h_{jji}\rt)^2-N\s_k^{ii}\ka_i^2\lt<\nu, \mathbf{E}\rt>+\frac{k\binom{n}{k}\lt<\nu, \mathbf{E}\rt>}{s-u}-\frac{\sigma_k^{ii}u_i^2}{(s-u)^2}.
\end{aligned}
\ee
Here, $K$ is some sufficiently large constant. Note that
\[-\s_k^{pq, rs}h_{pqj}h_{rsj}=\sum_{p, q}\s_k^{pp, qq}h_{pqj}^2-\sum_{p, q}\s_k^{pp, qq}h_{ppj}h_{qqj},\]
we denote
\[A_i=\frac{\ka_i^{m-1}}{P_m}[K(\s_k)_i^2-\sum_{p, q}\s_k^{pp, qq}h_{ppi}h_{qqi}],\]
\[B_i=\frac{2\ka_j^{m-1}}{P_m}\sum_j\s_k^{jj, ii}h_{jji}^2,\]
\[C_i=\frac{m-1}{P_m}\s_k^{ii}\sum_j\ka_j^{m-2}h^2_{jji},\]
\[D_i=\frac{2\s_k^{jj}}{P_m}\sum\limits_{j\neq i}\frac{\ka_j^{m-1}-\ka_i^{m-1}}{\ka_j-\ka_i}h^2_{jji},\]
and
\[E_i=\frac{m}{P_m^2}\s_k^{ii}\lt(\sum_j\ka_j^{m-1}h_{jji}\rt)^2.\]
Then equation \eqref{c2g1.5} becomes
\be\label{c2g1.6}
0\geq \sum_i(A_i+B_i+C_i+D_i-E_i)-\frac{k\binom{n}{k}\sum_j\ka_j^{m+1}}{P_m}-N\s_k^{ii}\ka_i^2\lt<\nu, \mathbf{E}\rt>
+\frac{k\binom{n}{k}\lt<\nu, \mathbf{E}\rt>}{s-u}-\frac{\sigma_k^{ii}u_i^2}{(s-u)^2}.
\ee
By Lemma 8 and 9 in \cite{LRW} we can assume the following claim holds.
\begin{claim}
For any $i=1, 2, \cdots, n$ we have
\be\label{c2g1.7}
A_i+B_i+C_i+D_i-(1+\frac{\eta}{m})E_i\geq 0,
\ee
where $m>0$ is sufficiently large and $0<\eta<1$ is small.
\end{claim}
Here we note that, by Lemma 8 of \cite{LRW}, for $i=2, 3, \cdots, n,$ inequality \eqref{c2g1.7}
always holds. In particular, for $i=2, 3, \cdots, n$ we have
 \[A_i+B_i+C_i+D_i-(1+\frac{1}{m})E_i\geq 0.\]
 For $i=1,$ if \eqref{c2g1.7} doesn't hold, by Lemma 9 of \cite{LRW}, there would exist a $\delta>0$ small such that
$\ka_k\geq\delta\ka_1.$ Since
\[\sigma_k(\ka[\M_{\uj}])=\binom{n}{k}\geq \ka_1\times\cdots\times\ka_k\geq \delta^{k-1}\ka_1^k,\]
we would obtain an upper bound for
$\ka_1$ directly, then we would be done.

Combining equation \eqref{c2g1.7} with \eqref{c2g1.6} we get
\begin{eqnarray}\label{3.37}
0&\geq&-C\kappa_{1}
+\sum_{i=2}^n\frac{\sigma_k^{ii}}{P_m^2}(\sum_j
\kappa_j^{m-1}h_{jji})^2 -N \sigma_k^{ii}\kappa_i^2\langle\nu,\mathbf{E}\rangle
+\dfrac{ k\binom{n}{k}\langle \nu,\mathbf{E}\rangle}{s-u}-\dfrac{\sigma_k^{ii}u_i^2}{(s-u)^2}.
\end{eqnarray}
By \eqref{3.2}, we have, for  any fixed $i\geq 2$,
$$
-\frac{\sigma_k^{ii} u_i^2}{(s-u)^2} =-\frac{\sigma_k^{ii}}{
P_m^2}(\sum_j \kappa_j^{m-1}h_{jji})^2
+\sigma_{k}^{ii}N^2u_i^2h_{ii}^2-\frac{2N\sigma_{k}^{ii}u_i^2h_{ii}}{s-u}.
$$
Hence, \eqref{3.37} becomes,
\begin{eqnarray}\label{3.38}
0&\geq&-C\kappa_1+\sum_{i=2}^n\left(\sigma_{k}^{ii}N^2u_i^2h_{ii}^2-\frac{2N\sigma_{k}^{ii}u_i^2h_{ii}}{s-u}\right)\\&&
-N \kappa_{i}^2\sigma_k^{ii}\langle \nu,\mathbf{E}\rangle
+\frac{k\binom{n}{k}\langle\nu,\mathbf{E}\rangle}{s-u}-\frac{\sigma_k^{11}u_1^2}{(s-u)^2}.\nonumber
\end{eqnarray}
Since, there is some positive constant $c_0$ such that, $$h_{11}\sigma_k^{11}\geq c_0>0,$$ we have,
\begin{eqnarray}\label{3.39}
0&\geq&\left(-\frac{c_0N\langle \nu,\mathbf{E}\rangle}{2}-C\right)\kappa_1-\sum_{i=2}^n\frac{2N\sigma_{k}u_i^2}{s-u}
-\frac{N}{2}\sigma_k^{11}\kappa_1^2\langle \nu,\mathbf{E}\rangle
+\frac{k\binom{n}{k}\langle \nu,\mathbf{E}\rangle}{s-u}-\frac{\sigma_k^{11}u_1^2}{(s-u)^2}.\nonumber
\end{eqnarray}
Here, we have used for any $1\leq i\leq n$ (no summation),
$$\sigma_k=\kappa_i\sigma_k^{ii}+\sigma_k(\kappa|i)\geq
\kappa_i\sigma_k^{ii}.$$
On the other hand, it's easy to see that
$$g^{ij}u_iu_j=|Du|^2+\frac{|Du|^4}{1-|Du|^2}=\frac{|Du|^2}{1-|Du|^2},$$ which implies
$$|\nabla u|_g<-\langle\nu,\mathbf{E}\rangle=\frac{1}{\sqrt{1-|Du|^2}}.$$
Hence, we obtain, for $-N\langle \nu,\mathbf{E}\rangle\geq
\dfrac{4C}{c_0}$,
\begin{equation}\label{3.40}
\left(\frac{C}{s-u}+\frac{C\sigma_k^{11}}{(s-u)^2}\right)\lt(-\lt<\nu,\mathbf{E}\rt>\rt)^2\geq\frac{Nc_0}{4}\kappa_1\lt(-\lt<\nu,\mathbf{E}\rt>\rt)
+\frac{N}{2}\sigma_k^{11}\kappa_1^2\lt(-\lt<\nu, \mathbf{E}\rt>\rt).
\end{equation}
If at the maximum value point $x_0$, $s-u\geq \sigma_k^{11}$,  the above
inequality becomes,
$$(-\langle\nu,\mathbf{E}\rangle)\frac{2C}{s-u}\geq\frac{Nc_0}{4}\kappa_1,
$$
which implies that at the point $x_0$, we have
 $$(s-u)\kappa_1\leq C.$$
If $s-u\leq \sigma_k^{11}$,  the inequality becomes,
$$(-\langle\nu,\mathbf{E}\rangle)\frac{2C\sigma_k^{11}}{(s-u)^2}\geq\frac{N}{2}\sigma_k^{11}\kappa_1^2,
$$
 which also implies that at the point $x_0$, we have
 $$(s-u)^2\kappa_1^2\leq C.$$
 Therefore, we obtain the desired Pogorelov type $C^2$ local estimates.
\end{proof}
A direct consequence of Lemma \ref{lc2lem1} is the following nonexistence result.
\begin{coro}
\label{lc2cor1}
Suppose $\M_u=\{(x, u(x))| x\in\R^n\}$ is an entire, convex, spacelike hypersurface with constant $\s_k$ curvature, namely, it satisfies the equation
$$\sigma_k(\ka[\M_u])=\binom{n}{k}.$$ Moreover, we assume $\M_u$ is strictly spacelike, that is, there is some constant $\theta<1$ such that
$$|Du|\leq \theta<1,\,\,x\in\R^n.$$ Then, such $\M_u$ does not exist.
\end{coro}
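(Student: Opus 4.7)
The plan is to derive a contradiction by applying the Pogorelov-type interior estimate of Lemma \ref{lc2lem1} with the level $s$ taken arbitrarily large, leveraging the uniform $C^1$ control provided by the strict spacelike hypothesis.

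First, I would note that $|Du|\leq\theta<1$ makes $u$ globally $\theta$-Lipschitz, so $u(x)\leq u(0)+\theta|x|$ on $\R^n$, and the sublevel set $\Omega_s:=\{u<s\}$ contains the ball $B_{(s-u(0))/\theta}(0)$ for every $s>u(0)$. Moreover, all quantities appearing in the proof of Lemma \ref{lc2lem1}---notably $-\langle\nu,\mathbf{E}\rangle=(1-|Du|^2)^{-1/2}$---are bounded uniformly in terms of $\theta$, $n$, $k$ alone.

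Next, I would show that the conclusion of Lemma \ref{lc2lem1} extends to this entire setting with a constant $C_5=C_5(\theta,n,k)$ independent of $s$: namely, $(s-u)\kappa_{\max}\leq C_5$ on $\Omega_s$. This is a direct re-run of the maximum principle computation for the test function
\[
\varphi=m\log(s-u)+\log P_m-mN\langle\nu,\mathbf{E}\rangle.
\]
On $\Omega_s$, $\varphi\to-\infty$ as one approaches $\{u=s\}$, so if $\Omega_s$ is bounded the maximum is attained at an interior point and the argument applies verbatim. If $\Omega_s$ is unbounded, I would introduce a small spatial penalty (for instance $-\epsilon|x|^2$) to force an interior maximum, observing that the penalty's contributions to the key inequality are controlled by the uniform bound on $\langle\nu,\mathbf{E}\rangle$; letting $\epsilon\to 0$ then recovers the stated estimate with the same constant $C_5$.

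Now evaluate at the origin with $s=u(0)+\theta R+1$ to obtain $(\theta R+1)\kappa_{\max}(0)\leq C_5$, hence $\kappa_{\max}(0)\leq C_5/(\theta R+1)$. Letting $R\to\infty$ yields $\kappa_{\max}(0)=0$. Since $\M_u$ is convex, all $\kappa_i(0)\geq 0$, so every principal curvature at $0$ must vanish, giving $\sigma_k(\kappa[\M_u](0))=0$. This contradicts $\sigma_k(\kappa[\M_u])=\binom{n}{k}>0$, completing the proof.

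The main obstacle is the extension step: verifying that the Pogorelov estimate, originally designed for a bounded domain with $u^J|_{\partial\Omega_J}>s$, produces a constant $C_5$ depending only on the strict spacelike parameter $\theta$ (and $n$, $k$), and that the estimate survives the case of an unbounded sublevel set via the penalty argument. Without the strict spacelike hypothesis, the quantity $(1-|Du|^2)^{-1/2}$ could blow up and $C_5$ would depend on the size of the sublevel set, ruining the decay $\kappa_{\max}(0)\lesssim 1/R$; it is precisely the uniform $C^1$ control that allows this decay to become nontrivial as $R\to\infty$.
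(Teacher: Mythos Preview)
Your overall strategy---apply the Pogorelov estimate of Lemma \ref{lc2lem1} with the constant $C_5$ depending only on the uniform $C^1$ bound $\theta$, then send $s\to\infty$ to force $\kappa_{\max}\to 0$---is exactly the paper's. The one point where you diverge is in handling the possibility that the sublevel set $\Omega_s=\{u<s\}$ is unbounded. The paper sidesteps this entirely with a geometric trick you did not use: since the strict spacelike hypothesis means the Gauss image lies in a compact subset of $\mathbb{H}^n$, one can apply a Lorentz transformation (an isometry of both $\R^{n,1}$ and $\mathbb{H}^n$) so that the transformed graph $\td u$ satisfies $\td u(x)\to\infty$ as $|x|\to\infty$; the strict spacelike bound survives with a new constant $\td\theta<1$, and every sublevel set is now compact, so Lemma \ref{lc2lem1} applies verbatim with no penalty needed.

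Your penalty approach is plausible in spirit, but as written it is the weakest link. Adding $-\epsilon|x|^2$ to the test function introduces, via the Gauss formula $\nabla_{ij}x_l=h_{ij}\nu_l$, a term of size $\epsilon|x|\cdot\sigma_k^{ii}h_{ii}=\epsilon|x|\cdot k\binom{n}{k}$ in the key inequality, which couples $\epsilon$ to the (a priori unknown) location of the maximum; you would have to show this location does not drift to infinity too fast as $\epsilon\to 0$. This can likely be made to work, but it is genuinely more delicate than your one-line sketch suggests. The Lorentz transform removes the issue for free and is worth knowing.
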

\begin{proof}
Notice that, if such $\M_u$ does exist, we can always apply Lorentz transform to $\M_u$ such that the resulting function
$\td{u}$ satisfies $\td{u}(x)\goto\infty$ as $|x|\goto\infty.$ Without loss of generality, in the following, we will always assume
$u(x)\goto\infty$ as $|x|\goto\infty.$

Now, for any point $p\in T_s=\{x\in\R^n, u(x)<s\},$ by Lemma \ref{lc2lem1} we have
\[(s-u)\ka_{\max}(p)\leq C(\theta).\]
Therefore, for any $x\in\R^n,$ we may take $s>0$ large such that $x\in T_{s/2}.$ Then, we get
\[\ka_{\max}(x)\leq\frac{2C(\theta)}{s}.\]
Letting $s\goto\infty$ leads to a contradiction.
\end{proof}
\begin{rmk}
The above Corollary can be seen as a generalization of the rigidity theorem obtained by Aiyama \cite{Aiya}, Xin \cite{Xin}, and Palmer \cite{Palmer}.
\end{rmk}

\subsection{Convergence of $\uj$ to the strictly convex solution $u$}
\label{scv}
Recall that in Section \ref{cs}, we proved that there exists a sequence of  strictly convex solution $\{\ujs\}_{J\in\mathbb{N}}$ to the approximating equations
\be\label{scv}
\left\{
\begin{aligned}
F(\w\gas_{ik}\ujs_{kl}\gas_{lj})&=\frac{1}{\binom{n}{k}^{\frac{1}{k}}}\,\,\text{in $\td{F}_J$}\\
\ujs&=\vjs\,\,\text{on $\partial\td{F}_J,$}
\end{aligned}
\right.
\ee
where $\vjs=\lus|_{\partial\td{F}_J},$ and $\{\td{F}_J\}_{J\in\mathbb{N}}$ is a sequence of strictly convex set satisfying $\td{F}_J\subset\td{F}_{J+1}\subset\td{F}$ and $\p\td{F}_J$ is smooth. Let $\uj$ denote the Legendre transform of $\ujs.$ Then
$\uj$ satisfies
\[\sigma_k^{\frac{1}{k}}(\la[\M_{\uj}])=\binom{n}{k}^{\frac{1}{k}}.\]
Combining estimates in Subsections \ref{lc0} - \ref{lc2} with the classic regularity theorem, we know that
there exists a subsequence of $\{\uj\}_{J=1}^\infty,$
which we will still denote by $\{\uj\}_{J=1}^\infty,$ converging locally smoothly to a convex function $u$ defined over $\R^n,$ and $u$ satisfies
\[\sigma_{k}(\ka[\M_u])=\binom{n}{k}\,\,\mbox{and $\lu(x)<u(x)<h(x),$ for $x\in\R^n$}.\]
Since when $\frac{x}{|x|}\in\F,$ $\lu(x)$ and $h(x)\goto |x|$ as $|x|\goto\infty,$ it's easy to see that $u(x)$ satisfies $u(x)\goto|x|$ for $\frac{x}{|x|}\in\F$ as $|x|\goto\infty.$

In order to finish the proof of Theorem \ref{intth1.1}, we only need to prove $u$ is strictly convex. By a small modification of Theorem 1.2 in \cite{GLM} (see also \cite{CGM}), we obtain the following Minkwoski space version of Constant Rank Theorem:
\begin{theo}
\label{constant rank}
Suppose $\Gamma\subset \mathbb{R}^{n+1}\times \mathbb{H}^n $ is a bounded open set. Let $\psi\in C^{1,1}(\Gamma)$ and $\psi(X,y)^{-1/k}$ be locally convex in the $X$ variable for any fixed $y\in\mathbb{H}^n$. Let $\M$ be an oriented, immersed, connected, spacelike hypersurface in Minkowski space $\mathbb{R}^{n,1}$ with a nonnegative definite second fundamental form. If $(X,\nu(X))\in\Gamma$ for  each $X \in \M$ and the principal curvatures $\kappa=(\kappa_1,\kappa_2,\cdots,\kappa_n)$ of $\M$ satisfies the equation
\be\label{cr1}\s_k(\kappa[\M])=\psi(X,\nu),\,\,1\leq k\leq n,
\ee
then the second fundamental form of $\M$ is of constant rank.
\end{theo}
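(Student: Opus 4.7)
The plan is to adapt the microscopic convexity (strong maximum principle) method of Caffarelli--Guan--Ma, in the form used by Guan--Lu--Ma \cite{GLM}, to the Minkowski setting. The only genuinely new ingredient will be bookkeeping: every place where the Euclidean proof invokes the Gauss or Weingarten equation must be replaced by its Lorentzian analogue from \eqref{Gauss}--\eqref{a1.2}. The global conclusion follows from a standard open-and-closed argument once a purely local statement is established.

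Concretely, let $\ell = \min_{X\in\M} \mathrm{rank}(h_{ij}(X))$ and let $Z\subset \M$ denote the (closed) set where the rank equals $\ell$. It suffices to show $Z$ is open. Fix $X_0\in Z$ and choose a local orthonormal frame $\{\tau_1,\ldots,\tau_n\}$ near $X_0$ that diagonalizes $h_{ij}(X_0)$, with ``good'' indices $G=\{1,\ldots,\ell\}$ (where $h_{ii}(X_0)>0$) and ``bad'' indices $B=\{\ell+1,\ldots,n\}$ (where $h_{ii}(X_0)=0$). Following \cite{GLM}, define the test function
\[
\varphi(X) \;=\; \sigma_{\ell+1}\!\bigl(h_{ij}(X)\bigr) \;+\; \frac{\sigma_{\ell+2}(h_{ij}(X))}{\sigma_{\ell+1}(h_{ij}(X))+t},
\]
where $t>0$ is a small parameter chosen so $\varphi\ge 0$ in a neighborhood $U$ of $X_0$, with $\varphi(X_0)=0$. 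Because the second fundamental form is positive semi-definite, $\varphi$ controls a sum of principal $(\ell{+}1)\times(\ell{+}1)$ minors of $(h_{ij})$ restricted to the bad block, so $\varphi\equiv 0$ in $U$ forces $\mathrm{rank}(h_{ij})\le \ell$ throughout $U$, hence $U\subset Z$ by minimality of $\ell$.

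The core of the proof is then to derive a differential inequality of the form
\[
F^{ij}\nabla_i\nabla_j\varphi \;\leq\; C\bigl(\varphi + |\nabla\varphi|\bigr) \qquad \text{modulo terms $\ge 0$ supported on $\{\varphi=0\}$,}
\]
where $F^{ij}=\partial \sigma_k/\partial h_{ij}$. To get this, I would differentiate $\varphi$ twice, symmetrize the third derivatives of $h$ via Codazzi, commute using the Minkowski Ricci identity \eqref{a1.2}, and substitute the once- and twice-differentiated curvature equation $\sigma_k(\kappa) = \psi(X,\nu)$. The Weingarten formula $\nu_i = h_{ij}\tau_j$ in \eqref{Gauss} carries the opposite sign from its Euclidean counterpart, but this reversal is exactly compensated by the opposite sign in the Gauss equation $R_{ijkl}=-(h_{ik}h_{jl}-h_{il}h_{jk})$, so the commutator terms $h_{mk}h_{ij}h_{ml}-\cdots$ appearing when one expresses $h_{ijkl}$ in terms of $h_{klij}$ enter with the same structural sign as in the Euclidean proof. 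The convexity hypothesis on $\psi^{-1/k}(\cdot,y)$ is then invoked in the standard way to dominate the ``bad'' third-order term $F^{pp,qq}h_{ppi}h_{qqi}$ that appears after differentiating the PDE twice in an eigendirection $i\in B$; this is the step in \cite{GLM} that requires $\psi^{-1/k}$ convex in $X$. Once the inequality is in hand, the strong maximum principle of Bony, applied to the degenerate elliptic operator $F^{ij}\nabla_i\nabla_j$, yields $\varphi\equiv 0$ in a neighborhood of $X_0$, completing the openness step.

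The main technical obstacle is verifying that the Minkowski sign changes do not spoil the crucial ``third-order balance'' that underlies the argument in \cite{GLM}. Concretely, one must check that the terms arising from $-(h_{mj}h_{il}-h_{ml}h_{ij})h_{mk}-(h_{mj}h_{kl}-h_{ml}h_{kj})h_{mi}$ in \eqref{a1.2}, once contracted against $F^{ij}$ and evaluated on an eigenframe adapted to the $G/B$ splitting, produce only zeroth- and first-order contributions in $\varphi$ (i.e.\ terms that vanish to order $\varphi$ on $Z$), together with manifestly nonnegative quadratic forms in the ``bad'' entries $h_{ab}$, $a,b\in B$. Because $\M$ is convex (semi-definite $h_{ij}$), all such quadratic forms have the correct sign. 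With this verification, the rest of the proof is a direct transcription of \cite{GLM,CGM}, and Theorem \ref{constant rank} follows.
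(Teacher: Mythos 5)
Your proposal is correct and is essentially the paper's own argument: the paper proves Theorem \ref{constant rank} precisely by invoking the constant rank theorem of \cite{GLM} and \cite{CGM} (test function built from $\sigma_{\ell+1}$ and $\sigma_{\ell+2}/\sigma_{\ell+1}$, differential inequality via the twice-differentiated equation and the convexity of $\psi^{-1/k}$, then the strong maximum principle), noting only that the opposite sign in the Minkowski Gauss equation must be checked not to disturb the computation — exactly the verification you outline. The only difference is that you spell out more of the bookkeeping than the paper, which skips the details entirely.
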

 Note that despite in the Minkowski space, the Gauss equation has an opposite sign, one can verify this theorem step by step following the argument of Theorem 1.2 in \cite{GLM} (see also Theorem 2 in \cite{CGM}). Therefore, we skip the proof here.

 \begin{theo}
 \label{splitting}
Let $\M$ be a convex, spacelike hypersurface satisfying \eqref{cr1}. If $\M$ is not strictly convex,
then after an $\R^{n,1}$
rigid motion, $\R^{n,1}$
splits as a product $\R^{l,1}\times\R^{n-l},$ $l\geq k,$  such that $\M$ also splits as a
product $\M^{l}\times\R^{n-l}.$ Here $\M^{l}\subset\R^{l,1}$
is a strictly convex, $l$-dimensional graph whose $\s_k$ curvature is equal to $\psi$.
 \end{theo}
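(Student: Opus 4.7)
The plan is to deduce the splitting from the Constant Rank Theorem stated above, by analyzing the null distribution of the shape operator and showing it produces a flat factor. First, applying Theorem \ref{constant rank} with $\psi=\binom{n}{k}$ to our convex spacelike hypersurface $\M$, we conclude that the second fundamental form $h$ has constant rank $l$ on $\M$. Since $\sigma_k(\kappa[\M])=\binom{n}{k}>0$, at least $k$ of the principal curvatures must be nonzero, so $l\geq k$. If $l=n$ then $\M$ is strictly convex and we are done, so we may assume $k\leq l\leq n-1$.

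Next, define the null distribution $\mathcal{K}\subset T\M$ by $\mathcal{K}_p=\ker h_p$; by the constant rank assertion, $\mathcal{K}$ is a smooth distribution of rank $n-l$ on $\M$. The first step is to show that $\mathcal{K}$ is involutive and that every integral leaf is totally geodesic in $\M$. The standard argument uses the Codazzi equation in \eqref{Gauss}: for $V,W\in\mathcal{K}$ and any tangent $\tau$, differentiating $h(V,\tau)=0$ and $h(W,\tau)=0$ and invoking $h_{ijk}=h_{ikj}$ forces $h(\nabla_V W,\tau)=0$, so $\nabla_V W\in\mathcal{K}$. This gives simultaneously involutivity and total geodesy of the leaves, exactly as in the Euclidean Hartman--Nirenberg-type argument adapted via \eqref{Gauss}.

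The main step, and the principal obstacle, is upgrading this to a splitting of the ambient Minkowski space. Using the Gauss formula in \eqref{Gauss}, for $V\in\mathcal{K}$ and any $W\in T\M$ we have $\bar\nabla_V W=\nabla_V W + h(V,W)\nu=\nabla_V W$, so integral curves of $\mathcal{K}$ are geodesics of $\R^{n,1}$, i.e. affine lines. One further checks that the Weingarten formula gives $\bar\nabla_V \nu=h(V,\cdot)=0$, hence $\nu$ is constant along each leaf. Combined with the total geodesy in $\M$, a direct computation shows that for $V,W\in\mathcal{K}$ one has $\bar\nabla_V W=0$ in $\R^{n,1}$, so $\mathcal{K}$ is a parallel distribution in the ambient space consisting of spacelike vectors (as $\M$ is spacelike). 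The hard part is to globalize: one must show $\mathcal{K}$ corresponds to a fixed $(n-l)$-dimensional spacelike subspace $V\subset\R^{n,1}$, independent of the point, and then that $\M$ is invariant under translations along $V$. This uses the parallelism of $\mathcal{K}$ in $\R^{n,1}$ together with simple connectedness of leaves, reducing the matter to a classical de~Rham-type decomposition argument.

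Once the ambient splitting $\R^{n,1}=V^{\perp}\oplus V$ with $V^\perp\cong\R^{l,1}$ and $V\cong\R^{n-l}$ is established, write $\M=\M^l\times\R^{n-l}$ where $\M^l$ is the orthogonal projection of $\M$ onto $V^\perp$. Because $\mathcal{K}$ exhausts the null directions of $h$, the induced second fundamental form on $\M^l$ has full rank $l$, i.e., $\M^l$ is strictly convex, and its nonzero principal curvatures coincide with those of $\M$. Hence $\sigma_k(\kappa[\M^l])=\sigma_k(\kappa[\M])=\binom{n}{k}$, giving the desired product structure. The principal difficulty to be handled carefully is the passage from pointwise parallelism of $\mathcal{K}$ to the existence of a single ambient affine splitting, which is where the convexity of $\M$ and the spacelike character of the kernel are essential (the kernel cannot pick up a lightlike direction because $\M$ is spacelike and $h\geq 0$ globally).
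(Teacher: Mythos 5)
The first half of your argument is sound and essentially parallels the paper's Steps 1--2: constant rank from Theorem \ref{constant rank}, $l\geq k$ from $\sigma_k>0$, smoothness of $\mathcal{K}=\ker h$, and then involutivity together with total geodesy of the leaves. Your route to the last point via the Codazzi equation (for $V,W\in\mathcal{K}$ and any $\tau$, $h(\nabla_VW,\tau)=-(\nabla_Vh)(W,\tau)=-(\nabla_\tau h)(V,W)=0$ since $h(\cdot,V)=h(\cdot,W)\equiv0$) is in fact cleaner than the paper's computation, which differentiates $\sigma_{l+1}\equiv0$ and $\sigma_{l+2}\equiv0$ to reach the same conclusion $\nabla_{e_q}e_p\in\ker(W)$. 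Likewise the observations that $\bar\nabla_VW=\nabla_VW$ and $\bar\nabla_V\nu=0$ along $\mathcal{K}$, so each leaf is an affine $(n-l)$-plane with $\nu$ constant along it, are correct and appear in the paper's Step 3.

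The gap is in the globalization, which you flag but do not actually carry out, and the reduction you propose does not follow from what you proved. First, $\bar\nabla_VW=0$ is not what the computation gives (it gives $\bar\nabla_VW=\nabla_VW\in\mathcal{K}$, which depends on the extension of $W$); what is true is that $\mathcal{K}$ is parallel \emph{along the leaves}, i.e.\ each leaf has a constant tangent plane along itself. This says nothing about how the planes $\mathcal{K}_p$ vary as $p$ moves transversally to the leaves, and that transverse constancy is exactly what is needed for a fixed ambient subspace $V\subset\R^{n,1}$ and the cylinder structure. A de Rham-type decomposition requires $\nabla_X\mathcal{K}\subset\mathcal{K}$ for \emph{all} $X\in T\M$ (a parallel distribution, plus completeness), which is strictly stronger than the autoparallelism you established; a priori the nullity planes could rotate from leaf to leaf, and ruling this out is the genuinely global content of the theorem (Hartman-type cylindricity), using completeness of the leaves or convexity, not just the pointwise identities you list. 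The paper closes this step differently: it shows the leaf $\M_0$ is totally geodesic, lies in the fixed hyperplane orthogonal to $\nu$, and is flat, and then invokes the Cheeger--Gromoll splitting theorem. In the present convex-graph setting one can also finish elementarily: each leaf is a full affine $(n-l)$-plane contained in $\M_u$, so the epigraph of the convex function $u$ is a closed convex set containing a line and therefore splits off its lineality space; this forces the nullity plane to be the same subspace at every point and yields $\M=\M^l\times\R^{n-l}$, after which the statement about $\sigma_k$ follows as you say. Without some argument of this kind your proposal stops short of the theorem.
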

\begin{proof}
Let $W$ be the Weigarten map of $\M$ and $\text{ker}(W)=\{v\in T\M|Wv=0\}$ be the kernel of $W,$ that is, the eigenvector space corresponding to the zero principal curvature. In view of Theorem \ref{constant rank} we know that, the dimension of $\text{ker}(W)$ is a constant. Without loss of generality, let's assume it to be $n-l$ for some $n>l\geq k.$

$\bf{Step 1.}$ We first prove the $\text{ker}(W)$ is a smooth subbundle of the tangent bundle $T\M$. The smoothness can be viewed as follows. Choosing any smooth orthonomal frame $\{e_1,e_2,\cdots, e_n\}$, the matrix of the map $W$ can be expressed by $(h_{ij})$ in this frame. We assume the first $l$ rows and $l$ columns of $W$ are linearly independent, and let
$$E_m=\sum_{i=1}^la_ie_i+e_m,$$ for $l+1\leq m\leq n.$ Using $WE_m=0$, we can find smooth linearly independent vector fields $\{E_{l+1},\cdots, E_n\}$
such that $\text{span}\{E_{l+1}, \cdots, E_n\}=\text{ker}(W)$. Thus, $\text{ker}(W)$ is a smooth subbundle of $T\M$.

$\bf{Step 2.}$ Next, we want to show the Frobenius condition is satisfied by $\text{ker}(W).$
Let $\{e_1,\cdots, e_n\}$ be some orthonomal frame such that
$$\text{span}\{e_{l+1}, \cdots, e_{n}\}=\text{ker}(W).$$ We still denote the matrix of $W$ by $(h_{ij}).$ Then, it's clear that $h_{im}=h_{mi}=0$ for any $1\leq i\leq n$ and $l+1\leq m\leq n$. By a proper rotation of the first $l$ vectors, we may assume at a fixed point $P\in\M$, $(h_{ij})$ is diagonal, i.e., $h_{ij}=\kappa_i\delta_{ij}$ and $\ka_i>0$ iff $1\leq i\leq l.$
Notice that $\sigma_{l+1}(W)=\sigma_{l+2}(W)=0$, the covariant derivative of this two functions with respect to $e_m,$ $1\leq m \leq n$ are
\begin{eqnarray}
0&=&(\sigma_{l+1})_m=\sigma_{l+1}^{ii}h_{iim}=\kappa_1\cdots\kappa_l\sum_{i=l+1}^nh_{iim},\nonumber\\
0&=&(\sigma_{l+2})_{mm}=\sigma_{l+2}^{ii}h_{iimm}+\sigma_{l+2}^{pq,rs}h_{pqm}h_{rsm}.\nonumber
\end{eqnarray}
Therefore, we obtain
\be\label{split1}
0=\sum_{i=l+1}^nh_{iim},
\ee
\be\label{split2}
0=\sum_{p\neq q, p,q>l }h_{ppm}h_{qqm}-\sum_{p\neq q, p,q>l}h_{pqm}^2.
\ee
Note that $\eqref{split1}^2-\eqref{split2}=0,$ we have
$$h_{iim}=0, \text{ for } i>l.$$ This in turn yields $$h_{pqm}=0,$$ for any $p,q>l$ and any $1\leq m\leq n$.

On the other hand, for $p,q>l$ and $1\leq m\leq n$, by $h_{mp}=0$, we get
$$e_q(h_{mp})=0.$$ Denote the connection of $M$ by $\nabla,$ then in view of the definition of the covariant derivatives we can see,
$$h_{mpq}=e_q(h_{mp})-h(\nabla_{e_q}e_m, e_p)-h(e_m, \nabla_{e_q}e_p).$$ Using $h_{mpq}=0$, we have
$$\sum_sh_{ms}\langle\nabla_{e_q}e_p, e_s\rangle=0,$$  which gives, for $1\leq m\leq l$,
\be\label{split3}
\langle\nabla_{e_q}e_p, e_m\rangle=0.
\ee
Therefore, $\nabla_{e_q}e_p\in\text{ker}(W)$ and the Frobenius condition is satisfied by $\text{ker}(W).$

$\bf{Step 3}.$ Finally, let $\M_0$ be the integral manifold of $\text{ker}(W),$ in this step, we will show $\M_0$ is flat.

By \eqref{split3} we know that $\M_0$ is a totally geodesic
$(n-l)$-dimensional submanifold of $\M.$ Moreover, it's not hard to see that $\M_0$ lies in the hyperplane $\mathbb{P}$ that is perpendicular to $\nu,$
where $\nu$ is the timelike unit normal of $\M.$ We can choose a coordinate such that
$\mathbb{P}=\{x|x_{n+1}=\lt<x, E\rt>=0\}$ for $E=(0,\cdots, 0, 1),$ then we have $\M_0\subset\mathbb{P}.$
Next, let's denote $\td{\M}_0:=\M_0\times\R^{l-1}$ be an $(n-1)$ dimensional hypersurface in $\mathbb{P}.$
Then at each point of $\td{\M}_0$ we have, 
$$\text{span}\{e_{l+1}, \cdots, e_{n}, \mu_1, \cdots, \mu_{l-1}\}=T\td{\M}_0$$
for some fixed orthonormal vectors $\mu_1, \cdots, \mu_{l-1}.$ Denote the normal of $\td{M}_0$ in $\mathbb{P}$ by $\td{\nu}.$
Recall \eqref{split3} we can see that $D_{e_\alpha}e_\beta\in\text{span}\{e_{l+1}, \cdots, e_{n}, \nu\},$ $l+1\leq\alpha, \beta\leq n.$
Therefore, for any $l+1\leq\alpha, \beta\leq n$ and $1\leq j\leq l-1,$ we have
\[D_{e_\alpha}e_\beta\cdot\td{\nu}=0\,\,\mbox{and $D_{e_\alpha}\mu_j\cdot\td{\nu}=0$}.\]
We conclude that $\td{\M}_0$ is flat, which implies $\M_0$ is flat. By Cheeger-Gromoll splitting theorem (see Theorem 2 in \cite{CG}),
we complete the proof of Theorem \ref{splitting}.
 \end{proof}

 By Theorem \ref{constant rank}, we can see that if our solution $u$ of \eqref{int1.0} has some degenerate point $x_0\in R^n$, i.e., $\sigma_{l+1}(\ka[\M(x_0)])=0$,
 then $\s_{l+1}(\ka[\M])\equiv 0$. Applying Theorem \ref{splitting}, we conclude that $\M_u=\{(x,u(x))| x\in\R^n\}$ splits
 into $\M^l\times\R^{n-l}$ , where $\M^l$ is an $l$-dimensional strictly convex hypersurface. This contradicts to the fact that
 when $\frac{x}{|x|}\in\F,$ as $|x|\goto\infty,$ $u\goto |x|.$ Therefore, we proved Theorem \ref{intth1.1}.

\subsection {Special case: Suppose $\F=\dS^{n-1}$}
\label{sp}
In \cite{RWX}, the following theorem is proved.
\begin{theo}
Given a $C^2$ function $\varphi$ on $B_1$, there is a unique strictly convex solution $\us\in C^{\infty}(B_1)\cap C^0(\bar{B}_1)$ to the equation
\be\label{int1.0*}
\left\{
\begin{aligned}
F(\w\gas_{ik}\us_{kl}\gas_{lj})&=1,\,\,\mbox{in $B_1$}\\
\us&=\vp,\,\,\mbox{on $\partial B_1$.}
\end{aligned}
\right.
\ee
Here $$\w=\sqrt{1-|\xi|^2},\ \ \gas_{ik}=\delta_{ik}-\frac{\xi_i\xi_k}{1+\w},\ \ \us_{kl}=\frac{\T^2\us}{\T\xi_k\T\xi_l},$$ $$F(\w\gas_{ik}\us_{kl}\gas_{lj})=\lt(\frac{\sigma_n}{\sigma_1}(\ka^*[\w\gas_{ik}\us_{kl}\gas_{lj}])\rt)^{\frac{1}{n-1}},$$
and $\ka^*[\w\gas_{ik}\us_{kl}\gas_{lj}]=(\ka^*_1, \cdots, \ka^*_n)$ are the eigenvalues of the matrix $(\w\gas_{ik}\us_{kl}\gas_{lj})$.
Moreover, the Legendre transform of $\us,$ which we will denote by $u$ satisfies
\[\sigma_{n-1}(\ka[\M_u])=1\,\,\mbox{and $\ka[\M_u]\leq C$}.\]
Here, $\M_u=\{(x, u(x)) |\, x\in\R^n\}$ is the spacelike graph of $u,$ $\ka[\M_u]$ denotes the principal curvatures of $\M_u,$ and the constant $C$ only depends on $|\varphi|_{C^2}$.
\end{theo}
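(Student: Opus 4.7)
The plan is to follow the same continuity-method framework used in Section \ref{cs}, specialized to the case $\F = \dS^{n-1}$, $\td{F} = B_1$, $k = n-1$. Because the equation $F(\w\gas_{ik}\us_{kl}\gas_{lj}) = 1$ degenerates at $\partial B_1$ (where $\w = 0$ forces $|D^2 \us| \to \infty$), I would first solve the non-degenerate Dirichlet problems
\[
F(\w\gas_{ik}\ujs_{kl}\gas_{lj}) = 1 \ \ \text{in $B_{r_J}$,} \qquad \ujs = \vp \ \ \text{on $\partial B_{r_J}$,}
\]
with $r_J \nearrow 1$, using the continuity method, and then pass to the limit via uniform interior estimates.

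For the $C^0$ estimate I would use convexity together with upper and lower barriers built from $\vp$ and from dilates/translates of the hyperboloid $\xi \mapsto -\sqrt{c-|\xi|^2}$. The boundary $C^1$ estimate follows by constructing an explicit subsolution on the sphere $\partial B_{r_J}$ via the Section 2 of \cite{CNS} procedure, and the interior $C^1$ bound is then immediate from convexity of $\ujs$. For the $C^2$ boundary estimate I would reformulate the equation in the Klein model of $\mathbb{H}^n(-1)$ via Lemma \ref{c2blem1.1}: setting $v = \ujs/\sqrt{1-|\xi|^2}$ converts the equation to $F(\bn_i\bn_j v - v\delta_{ij}) = 1$. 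The special feature of the ball case is that $\partial B_{r_J}$ maps to a coordinate sphere $\{x_{n+1} = c_J\}$ in $\mathbb{H}^n(-1)$, so the elementary auxiliary function $c_J - x_{n+1}$ satisfies the analogue of \eqref{c2b1.5}-\eqref{c2b1.6} without the elaborate perturbation needed for general convex $\td{F}_J$, and the arguments of \cite{Guan} then yield the boundary $C^2$ bound directly.

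The hard step is the global $C^2$ estimate, which has to provide not only an upper bound on $\Lambda = (v_{ij} - v\delta_{ij})$ but also a lower bound that is uniform in $J$, since the lower bound is precisely what produces the upper principal curvature bound $\ka[\M_u] \leq C$ after Legendre transform. The upper bound on $\lambda_{\max}(\Lambda)$ follows from the test function $\log \Lambda_{11} + Nx_{n+1}$ exactly as in Lemma \ref{c2glem2}, using concavity of $F = (\sigma_n/\sigma_1)^{1/(n-1)}$ together with the explicit form \eqref{F} of $F^{ii}$. For the lower bound on $\lambda_{\min}(\Lambda)$ I would apply the maximum principle to an auxiliary function of the form $-\log\lambda_{\min}(\Lambda) + N x_{n+1}$, again exploiting \eqref{F} and the fact that $\vp \in C^2(\bar{B}_1)$ provides $\lambda_{\min}$ control on $\partial B_{r_J}$; the difficulty is that $\lambda_{\min}$ may not be differentiable, so in practice one would perturb to a $C^2$ symmetric function (e.g.\ $\operatorname{tr}\Lambda^{-1}$) and run the argument there.

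Having these uniform estimates, Evans-Krylov and Schauder theory give $\ujs \to \us \in C^\infty(B_1) \cap C^0(\bar{B}_1)$, strictly convex, solving the stated Dirichlet problem; uniqueness follows from strict monotonicity and concavity of $F$ on $\mathcal{S}_+$ via the comparison principle. Finally, the two-sided Hessian bound makes $D\us$ a smooth diffeomorphism from $B_1$ onto $\R^n$, so the Legendre transform $u$ is a smooth entire convex function. Using $\sigma_n(\ka^*) = 1/\sigma_n(\ka)$ and $\sigma_1(\ka^*) = \sigma_{n-1}(\ka)/\sigma_n(\ka)$, the equation $F(\ka^*) = 1$ becomes $\sigma_{n-1}(\ka[\M_u]) = 1$, while the uniform lower bound on $D^2\us$ translates into $\ka[\M_u] \leq C$ with $C$ depending only on $|\vp|_{C^2}$.
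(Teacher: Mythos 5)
First, note that the paper does not actually prove this statement: it is quoted verbatim from \cite{RWX}, and the only proof-related content here is the remark that the general-$k$ version follows by "applying Subsection \ref{c2g} and Lemma \ref{lc2lem1}" together with Lemma \ref{lem-legendre-boundary}. Your overall framework (approximate by Dirichlet problems on $B_{r_J}$, $r_J\nearrow 1$; pass to the Klein model via Lemma \ref{c2blem1.1}; use $c_J-x_{n+1}$ as the boundary barrier, which works precisely because $P^{-1}(\partial B_{r_J})$ is a level set of $x_{n+1}$) is consistent with what the paper says about \cite{RWX}, and your final Legendre-transform bookkeeping ($\sigma_n(\ka^*)=1/\sigma_n(\ka)$, $\sigma_1(\ka^*)=\sigma_{n-1}(\ka)/\sigma_n(\ka)$) is correct.

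The genuine gap is in the step that carries the whole "Moreover" clause: the \emph{uniformity in $J$} of the second-order bounds, which is exactly what makes $C$ depend only on $|\vp|_{C^2}$. You say the upper bound on $\lambda_{\max}(\Lambda)$ follows "exactly as in Lemma \ref{c2glem2}", but the constant produced there is not uniform as $r_J\goto 1$: the test function contains $Nx_{n+1}$, and on $U_J=P^{-1}(B_{r_J})$ one has $\sup x_{n+1}=(1-r_J^2)^{-1/2}\goto\infty$; moreover the Dirichlet data $v=\vp/\sqrt{1-|\xi|^2}$ and the boundary $C^2$ estimates on $\partial U_J$ also blow up in this limit. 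Those per-$J$ bounds suffice to solve each approximating problem (that is all Subsection \ref{c2g} is used for in the paper), but they do not give $\ka[\M_u]\leq C(|\vp|_{C^2})$. Your substitute for the missing uniform bound, a maximum principle for $-\log\lambda_{\min}(\Lambda)+Nx_{n+1}$ (or $\operatorname{tr}\Lambda^{-1}$), is not substantiated: no differential inequality is verified for this $F$, the boundary control of $\lambda_{\min}$ on $\partial B_{r_J}$ "because $\vp\in C^2$" is precisely where the degeneracy $\w\goto 0$ bites, and the $Nx_{n+1}$ term again destroys uniformity. It is also unnecessary: since $\sigma_n/\sigma_1$ is constant, Maclaurin/AM--GM give a pointwise positive lower bound on $\sigma_n(\lambda)$, so a uniform upper bound on $\lambda_{\max}$ already forces $\lambda_{\min}\geq c/\lambda_{\max}^{n-1}$, i.e.\ the curvature upper bound. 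What is actually needed, and what the paper indicates as the mechanism (and you never invoke), is the Pogorelov-type estimate of Lemma \ref{lc2lem1} applied to the Legendre transforms $\uj$ (together with a gradient bound of the type $1/\sqrt{1-|Du|^2}\lesssim u$ coming from the barriers), which produces the uniform upper bound on $\ka_{\max}$ independently of $J$; alternatively one must redo the dual global estimate with a test function whose boundary values and weight are controlled by $|\vp|_{C^2}$ alone, as in \cite{RWX}. A final small imprecision: a uniform lower bound on $D^2\us$ does not by itself give $\ka[\M_u]\leq C$ near $\partial B_1$; one needs the lower bound on the weighted Hessian $\w\gas_{ik}\us_{kl}\gas_{lj}$, which is strictly stronger because of the factor $\w\goto 0$.
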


Applying Subsection \ref{c2g} and Lemma \ref{lc2lem1}, we can easily generalize this theorem to the case when
$F(\w\gas_{ik}\us_{kl}\gas_{lj})=\lt(\frac{\sigma_n}{\sigma_{n-k}}(\ka^*[\w\gas_{ik}\us_{kl}\gas_{lj}])\rt)^{\frac{1}{k}}.$
Moreover, we want to point out that from Lemma \ref{lem-legendre-boundary} we can see
the Legendre transform of $\us$ which we denote by $u$ satisfies
$u(x)-|x|=-\vp\lt(\frac{x}{|x|}\rt).$ Therefore, Theorem \ref{intth1.2} is proved.

\end{document}